\newtheorem{lemma}{Lemma}
\theoremstyle{plain}
\newtheorem{theorem}{Theorem}[section]
\newtheorem{definition}{Definition}[section]
\theoremstyle{remark}
\newtheorem{remark}{Remark}
\newtheorem{corollary}{Corollary}
\newtheorem{assump}{}
\newcommand{\lV}{\lVert}
\newcommand{\rV}{\rVert}
\newcommand{\lf}{\left}
\newcommand{\rg}{\right}
\theoremstyle{plain}
\theoremstyle{remark}
\begin{document}


\begin{frontmatter}

\title{Existence and uniqueness of solutions of unsteady Darcy-Brinkman problem for modelling miscible reactive flows in porous media} 
 \author{Pankaj Roy\fnref{label2}}
 \ead{pankaj.roy@iitg.ac.in}
 \author{Satyajit Pramanik\corref{cor1}\fnref{label2}}
\ead{satyajitp@iitg.ac.in}
\cortext[cor1]{Corresponding author.}

\affiliation[label2]{organization={Department of Mathematics, Indian Institute of Technology Guwahati},
            city={Guwahati},
            postcode={781039}, 
            state={Assam},
            country={India}}

\begin{abstract}
In this work, we investigate a model describing flow through porous media with permeability heterogeneity, combining an advection-reaction-diffusion equation for solute concentration with an unsteady Darcy-Brinkman equation with Korteweg stresses in the presence of external body forces for the flow field. Such models are appropriate in describing flows in fractured karst reservoirs, mineral wool, industrial foam, coastal mud, etc. These equations are coupled with Neumann boundary conditions for the solute concentration and no-flow conditions for the fluid velocity. For a broad class of initial data, we proved the existence of weak solutions. In the presence of a second-order nonlinear reaction, we show that the long-time behaviour of the solution depends on the initial concentration \(C_0\). More precisely, the solution exists for all time if \(0\leq C_0\leq 1\), and blows up at finite time if $C_0>1$. Furthermore, the uniqueness of the solution is proved for a two-dimensional domain. Finally, numerical simulations based on the finite element method have been presented that illustrate non-negativity of the concentration, long-time decay, and finite-time blow-up in agreement with theoretical estimates. 
\end{abstract}

\begin{keyword}
Existence and uniqueness, Darcy-Brinkman, Galerkin method, Korteweg stress   \end{keyword}
\end{frontmatter}
 
\section{Introduction}

Porous media flows are ubiquitous in various configurations from biological to geophysical to engineering contexts. These processes are governed by the complex interplay of fluid properties and the structure of the porous medium. The mathematical description of flow through porous media typically involves a nonlinear partial differential equations system. Central to modeling these flows is Darcy's law, an empirical relationship given by $ \displaystyle \frac{\mu}{K}\mathbf u = -\boldsymbol{\boldsymbol{\nabla}} p$, proposed by Henry Darcy, which describes the flow of a fluid through a porous medium based on factors such as pressure gradients and permeability \citep{darcy1856}. Here, $\mathbf u, p, \mu, K$ denote the fluid velocity, pressure, viscosity of the fluid, and permeability of the porous media, respectively. Darcy's law has been widely used to understand and predict flow behaviour in various environmental, industrial, and biological systems, which are significantly influenced by the nature and structure of the porous medium. Notable examples include the study of carbon dioxide (CO$_2$) sequestration for mitigating climate change \cite{huppert2014fluid}, analyzing groundwater contamination to ensure safe water resources \citep{neuzil1986groundwater}, enhancing oil recovery from natural reservoirs to meet energy demands \cite{homsy1987viscous}, and investigating tumor growth \cite{zheng2022tumor}, where nutrient and drug transport through porous tissues is critical \citep{molz1986simulation}. These diverse applications highlight the universal importance of Darcy's law in understanding flow through porous media. 

Although used widely, Darcy's law comes with its limitations. \citet{khalifa2002new} showed it breaks down when the critical Reynolds number is exceeded in steady flows or the critical acceleration is reached in unsteady flows. For such unsteady flow situations, an extension to Darcy's law has been presented by adding a time derivative of velocity to the classical Darcy's law. Again, for porous media with large porosity (typically, greater than 0.75), Darcy's law fails to be an appropriate model \citep{mccurdy2019convection}. 
Instead, the Brinkman equation better represents viscous flows in such media, enriched with micro and macro length scales \citep{brinkman1949calculation}. This extension to Darcy's law includes the Laplacian of the velocity vector and is given by $ \displaystyle \frac{\mu}{K}\mathbf u = - \boldsymbol{\boldsymbol{\nabla}} p + \mu_e \Delta \mathbf u$, where $\mu_e$ is the effective viscosity and other symbols have the same meaning as mentioned above. The effective viscosity typically depends on the structure of the porous media and the strength of the flow field, leading to values different from the dynamic viscosity of the fluid \citep{vafai2015handbook}. The Brinkman equation has been widely used to model fractures in porous media \citep{morales2017darcy}, tumour growth \citep{ebenbeck2019cahn}, mixing of surface and ground water \cite{layton2002coupling}, cancer cell migration, platelet aggregation \citep{link2020mathematical}, flow in porous conduits \citep{babuvska2010residual}, to name a few. Inertial effects in porous media flows -- modelled using the Darcy-Frochheimer equation \citep{cimolin2013navier} -- have been of interest to researchers over several decades \citep[see][for a review]{koch2001inertial}. Recently, inertial effects have been revisited by researchers with a new outlook to explore the fundamental physical mechanisms of transition from Darcy regime to inertial (Darcy-Forchheimer) regime \citep[see][and refs. therein]{naqvi2025inertial, strzelczyk2025role}. Although the inclusion of time derivative in Darcy/Darcy-Brinkman equations can be debated, researchers have considered an unsteady Darcy-Brinkman equation in modeling biological processes, petroleum extraction in karst reservoirs, etc. \citep[see][and references therein]{kundu2024existence}. \citet{CELEBI2006801} showed the well-posedness of a Brinkman–Forchheimer model. \citet{Girelli_unsteady_DB} studied a multiscale dual porosity model, where the fluid flow within the blood vessel is governed by Darcy's law, and the flow within the matrix region, corresponding to the conduit network formed by fibroblastic reticular cells, is governed by the unsteady Darcy-Brinkman equation, using an asymptotic homogenization technique. Well-posedness of an unsteady Darcy-Brinkman model with a coupling of the advection-diffusion equation is established in \cite{kundu2024existence}. The unsteady Darcy-Brinkman equations have also gained the attention of numerical analysts \citep{hou2016dual, evje2018stokes}.

Fluid flows in porous media are often connected with heat and mass transfer \cite{carbonell1984heat, delgado2011heat, homsy1987viscous, pramanik2013linear, dehghan2024fully, caucao2020fully, caucao2022posteriori}. In 1901, \citet{korteweg1901form} introduced the idea that nonuniform distributions of density, concentration, or temperature within a fluid can generate stresses and induce convection. The effects of these stresses (or an effective tension), known as the Korteweg stresses, have been widely investigated experimentally and numerically. \citet{pojman2007miscible} experimentally studied that there exists a transient interfacial phenomenon similar to those observed with immiscible fluids for both the isothermal and thermal gradient miscible fluids (e.g., water-honey). Also, various authors have studied the effect of Korteweg stress in the context of miscible viscous fingering \citep{swernath2010effect, chen2001miscible, chen2002miscible}.

Miscible flows in porous media are commonly modeled by coupling a mass balance equation of a scalar species of advection-diffusion-reaction type with the flow equations \citep{homsy1987viscous, pramanik2013linear, de2016chemo, rana2019influence}. Miscible displacements in porous media have been a focus of research by researchers, of which fingering instabilities due to mobility and density differences are of special interest in the contexts of enhanced oil recovery \citep{homsy1987viscous}, CO$_2$ sequestration \citep{huppert2014fluid}, chromatography separation \citep{rana2019influence}, etc. The underlying fluids are sometimes reactive. Autocatalysis is a chemical process where the product acts as a catalyst, accelerating its formation. These kinds of reactions start very slowly but rapidly increase in rate as more products begin to form. A typical form is 
\(
A + nB \rightarrow (n + 1)B,
\)
where \(n\) is the autocatalysis order, and \(A\) and \(B\) interact to produce \(B\), where \( B \) catalyzes its own production. \citet{de2001fingering} showed that an auto-catalytic reactive front leads to new instability patterns in miscible porous media flows. \citet{nagatsu2014hydrodynamic} experimentally demonstrated that a precipitation reaction alters the permeability of the Hele-Shaw cell, inducing fingering instabilities similar to viscous fingering caused by viscosity variation with the chemical reactants/product. \citet{Ghesmat2009} investigated second and third-order autocatalytic reactions and reported the major differences in the flow development. In the present study, a second-order auto-catalytic reaction of the form $-\kappa C(1-C)$ is considered.  
In the present study, a second-order reaction kinetics of the quadratic type $- \kappa c(1-c)$, $\kappa > 0$ has been considered. This nonlinearity is often used as a prototype for autocatalytic reactions \citep{fisher1937wave} and has physical significance in combustion processes \citep{williams2018combustion}. However, such reactions introduce substantial analytical challenges, as this feature $c = 1$ as an unstable state for $\kappa > 0$ in a spatially homogeneous system.

Although miscible displacements in porous media have gained the attention of physicists, engineers, chemists, geophysicists, and numerical analysts, the well-posedness of a certain class of problems describing miscible porous media flow is poorly explored \citep[see][for some recent advances in this direction]{migorski2019nonmonotone, allali2017existence, kostin2003modelling}. These researchers primarily focused on the cases when either the viscosity of the fluid or the permeability of the porous media varies with the concentration of a scalar species. Their studies are also limited to the case of a first-order reaction. In this paper, we studied the existence and uniqueness of a weak solution of reactive miscible flows in porous media. We considered the viscosity and permeability to be positive functions of the concentration of a solute, which undergoes a second-order chemical reaction. 

\subsection{Mathematical Model} \label{subsec: Mathematical Model}

In this section, we present the unsteady Darcy-Brinkman model, which characterizes the flow of a miscible fluid having variable viscosity within a porous medium featuring heterogeneous permeability, alongside the inclusion of the Korteweg stress. 

The fluid we considered here is incompressible and miscible. Let $\Omega \subset \mathds{R}^N$ $(N = 2,3)$ be the bounded domain with $\mathcal C ^1$ boundary of the flow and $\left( 0, T \right)$ be the time interval. The governing equations are \citep{kundu2024existence, Ghesmat2009, fisher1937wave, williams2018combustion}, 
\begin{eqnarray}
	& & \boldsymbol{\nabla} \cdot \mathbf{u} =0  \quad  in\  \Omega \times \left(0,T\right) \label{eq:continuity}, \\
	& & \frac{\partial \mathbf{u}}{\partial t}+\frac{\mu\left(C\right)}{K\left(C\right)} \mathbf{u}=-\boldsymbol{\nabla} p+\mu_e\Delta \label{eq:Brinkman-Darcy} \mathbf{u}+\boldsymbol{\nabla} \cdot \mathbf{T}\left(C\right)+\mathbf f \quad  in\  \Omega \times \left(0,T\right), \\
	& & \frac{\partial C}{\partial t}+\mathbf{u} \cdot \boldsymbol{\nabla} C =d \Delta C  -\kappa C\left(1-C\right)\quad  in\  \Omega \times \left(0,T\right) \label{eq:transport}.
\end{eqnarray}
Here, \( C \) denotes the solute concentration in the fluid, and \( d \) represents its diffusion coefficient. The parameter \( \kappa \) is the reaction constant, \( \mathbf f \) denotes an external force, and $\mathbf T(C)$ denotes the Korteweg stress tensor. All other symbols retain the meanings defined earlier.
The viscosity \( \mu(C) \) of the fluid and the permeability \( K(C) \) of the porous medium are both assumed to depend on the solute concentration. Furthermore, all physical parameters—namely, the viscosity \( \mu(C) \), permeability \( K(C) \), effective viscosity \( \mu_e \), diffusion coefficient \( d \), and reaction constant $\kappa$—are taken to be strictly positive. 

The motivation for considering viscosity and permeability as functions of the solute concentration \( C \) arises from applications such as enhanced oil recovery \cite{homsy1987viscous}, studies on viscous fingering \cite{nagatsu2014hydrodynamic, rana2019influence}, etc.
 For instance, in \cite{nagatsu2014hydrodynamic}, the permeability was modeled as \( K(C) = \exp(-2RC) \), where \( R \) is a constant parameter. We have denoted the ratio of viscosity and permeability as the mobility function $F(C)$, 
 \begin{equation}
     \label{eq:F}
     F\left(C\right)= \displaystyle \frac{\mu \left(C\right)}{K\left(C\right)}.
 \end{equation}
The Korteweg stress tensor \( \mathbf{T}(C) \) is expressed as \citet[see][and references therein]{pramanik2013linear}
\begin{align}\label{exp: Korteweg 1st}
\mathbf{T}(C) = \left( -\frac{1}{3} \hat{\delta} |\boldsymbol{\nabla} C|^{2} + \frac{2}{3} \gamma \boldsymbol{\nabla}^{2} C \right) \mathbf I + \hat{\delta} \boldsymbol{\nabla} C \boldsymbol{\nabla} C,
\end{align}
where \( \mathbf I \) is the identity matrix. The constants \( \gamma > 0 \) and \( \hat{\delta} > 0 \) are known as the Korteweg parameters. From the continuum theory of surface tension \cite{JOSEPH1996104}, one can write,
\begin{align} \label{exp: Korteweg stress}
\boldsymbol{\nabla} \cdot \mathbf{T}(C) = \boldsymbol{\nabla} Q(C) - \hat{\delta} \boldsymbol{\nabla} \cdot \left( \boldsymbol{\nabla} C  \boldsymbol{\nabla} C \right),
\end{align}
where \( Q(C) \) is the term appearing as the coefficient of the identity matrix \( \mathbf I \) in \( \mathbf{T}(C) \).
 
The governing equations \eqref{eq:continuity}--\eqref{eq:transport} are subject to the following boundary conditions:
\begin{equation}
	\mathbf{u} = \mathbf 0, \quad \frac{\partial C}{\partial \boldsymbol{\eta}} = 0 \quad on \ \partial \Omega \times \left(0,T\right) \label{eq: boundary condition},
\end{equation}
where $\boldsymbol{\eta}$ represents the unit outward normal vector to the boundary \(\partial \Omega\).
The first condition is due to the no-slip boundary condition, and the second condition is due to the impermeability of the boundary.
Additionally, the initial conditions for the equations \eqref{eq:continuity}--\eqref{eq:transport} are given as:
\begin{equation}
	\mathbf{u}\left(\mathbf{x}, 0\right) = \mathbf{u}_0\left(\mathbf{x}\right), \quad C\left(\mathbf{x}, 0\right) = C_0\left(\mathbf{x}\right) \quad \forall \mathbf{x} \in \Omega \label{eq:initial_condition}.
\end{equation}

The remainder of the paper is organized as follows. We introduce necessary function spaces and the weak formulation of the problem \eqref{eq:continuity} -- \eqref{eq:transport} in Section \ref{sec:function_spaces}. Section \ref{sec: main_result} develops necessary lemmas and establishes the proof of the main result --- existence and uniqueness of the weak solutions and long-time behaviour of the solute concentration. Section \eqref{sec:special_cases} discusses some special cases of the mobility function $F(C)$.
Finally, numerical results demonstrating behaviour of solute concentration in agreement with theoretical estimates are presented in Section \eqref{sec:numerical}, followed by the concluding remarks in Section \ref{sec: conclusion}. 

\section{Function spaces and preliminary results} \label{sec:function_spaces}

In this section, we introduce standard notions of necessary function spaces and essential results that will be frequently referred to throughout this paper. 
For a Hilbert space \( \mathscr{H} \), the inner product on \( \mathscr{H} \) is represented by \( (\cdot, \cdot)_{\mathscr{H}} \), and the associated norm is given by \( \|\cdot\|_{\mathscr{H}} \). We denote the duality pairing between \( \mathscr{H}\) and its dual \( \mathscr{H}^* \) by \( \langle \cdot, \cdot \rangle_{\mathscr{H}^*,\mathscr{H}} \).  For a Banach space \( \mathscr{B} \) we denote the associated norm by \( \|\cdot\|_{\mathscr{B}} \). The space \( (L^2(\Omega))^2 \) consists of vector-valued functions \( \mathbf{u} = (u_1, u_2) \), where each component \( u_i \) (for \( i = 1,2 \)) belongs to the space \( L^2(\Omega) \). This space is equipped with the inner product,  
\begin{align}
    (\mathbf{u}, \mathbf{v})_{(L^2(\Omega))^2} = \int_{\Omega} \left( u_1 v_1 + u_2 v_2 \right) \, dx,
\end{align}
and the corresponding norm, 
\begin{equation}
    \label{eq:norm}
    \|\mathbf{u}\|_{(L^2(\Omega))^2} = \left( \|u_1\|_{L^2(\Omega)}^2 + \|u_2\|_{L^2(\Omega)}^2 \right)^{\frac{1}{2}}. 
\end{equation}
For notational convenience, we have used $\|\cdot\|_{(L^2(\Omega))^2}$, $\|\cdot\|_{L^2(\Omega)}$ by $\|\cdot\|_{L^2}$ and $(\cdot,\cdot)_{(L^2(\Omega))^2}$, $(\cdot,\cdot)_{L^2(\Omega)}$ by $(\cdot,\cdot)$.
For the space $H^1_0$ and its dual $H^{-1}$ we have denoted the duality pairing between them by \[
\langle f, v \rangle, \quad \text{for } f \in H^{-1}(\Omega) \text{ and } v \in H^1_0(\Omega).
\]
Now, we present some key preliminary results required for the development of our main arguments.

\begin{theorem}[Gagliardo–Nirenberg (see \emph{\citep[Lemma 1]{migorski2019nonmonotone}}, \emph{\citep[Lemma 1.1]{ebenbeck2019cahn}})]\label{Th: gagliardo}
Let $\Omega \subset \mathds R^N,\ N = 2,3, $ be a domain with $\mathcal{C}^{1}$ boundary. Then, for any $\boldsymbol{\phi} \in H^{1}(\Omega)$, there exists a constant  $M>0 $ depending on $\Omega$ such that the following inequality holds for $N=2$:
$$
\|\boldsymbol{\phi}\|_{L^{4}} \leq M\|\boldsymbol{\phi}\|_{L^{2}}^{1 / 2}\| \boldsymbol{\phi}\|_{H^{1}}^{1 / 2},
$$
and for $N=3$:
$$
\|\boldsymbol{\phi}\|_{L^{3}} \leq M\|\boldsymbol{\phi}\|_{L^{2}}^{1 / 2}\| \boldsymbol{\phi}\|_{H^{1}}^{1 / 2}, \quad \text{and} \quad \|\boldsymbol{\phi}\|_{L^{4}} \leq M\|\boldsymbol{\phi}\|_{L^{2}}^{1 / 4}\| \boldsymbol{\phi}\|_{H^{1}}^{3 / 4}.
$$
\end{theorem}

\begin{theorem}[{Gr\"onwall’s inequality (integral form)} (see Appendix of \citep{evans2010partial})]\label{Th: gronwall}
Let $\xi(t)$ be a nonnegative, summable function on $[0,T]$ which satisfies for a.e.\ $t$ the integral inequality
\begin{equation*}
\xi(t) \leq A_1 \int_0^t \xi(s)\, ds + A_2
\end{equation*}
for constants $A_1, A_2 \geq 0$. Then
\begin{equation*}
\xi(t) \leq A_2 (1 + A_1 t e^{A_1 t})
\end{equation*}
for a.e.\ $0 \leq t \leq T$.
\end{theorem}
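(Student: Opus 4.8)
The plan is to reduce the integral inequality to a differential inequality for the running integral of $\xi$ and then integrate it against a suitable integrating factor. First I would set $\eta(t) := \int_0^t \xi(s)\,ds$. Since $\xi$ is nonnegative and summable on $[0,T]$, the function $\eta$ is absolutely continuous on $[0,T]$, satisfies $\eta(0)=0$, and by the Lebesgue form of the fundamental theorem of calculus one has $\eta'(t)=\xi(t)$ for a.e.\ $t$. Substituting the hypothesis into this identity yields the pointwise differential inequality $\eta'(t) \le C_1\eta(t)+C_2$, valid for a.e.\ $t \in [0,T]$.

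Next I would eliminate the linear term by multiplying through by the integrating factor $e^{-C_1 t}$. Because $\eta$ is absolutely continuous and $t \mapsto e^{-C_1 t}$ is smooth, the product $e^{-C_1 t}\eta(t)$ is absolutely continuous, and for a.e.\ $t$ the product rule gives
$$\frac{d}{dt}\left(e^{-C_1 t}\eta(t)\right) = e^{-C_1 t}\bigl(\eta'(t)-C_1\eta(t)\bigr) \le C_2\, e^{-C_1 t} \le C_2,$$
where in the last step I use $e^{-C_1 t}\le 1$ (valid since $C_1\ge 0$). This crude bound is exactly what keeps the argument uniform down to the degenerate case $C_1=0$ and avoids ever dividing by $C_1$, which is why it produces the stated form of the estimate rather than the sharper $C_2 e^{C_1 t}$.

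Then I would integrate the differential inequality over $[0,t]$. Using $\eta(0)=0$ together with the absolute continuity that legitimizes the fundamental theorem of calculus for the left-hand side, I obtain $e^{-C_1 t}\eta(t) \le C_2 t$, hence $\eta(t) \le C_2\, t\, e^{C_1 t}$. Feeding this back into the original hypothesis $\xi(t) \le C_1\eta(t)+C_2$ gives $\xi(t) \le C_1 C_2\, t\, e^{C_1 t} + C_2 = C_2\bigl(1 + C_1 t\, e^{C_1 t}\bigr)$ for a.e.\ $t$, which is the claimed bound.

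The step I expect to require the most care is the passage from the integral inequality --- which holds only almost everywhere and for a merely summable $\xi$ --- to the pointwise differential inequality for $\eta'$. This hinges on the regularity gained through integration: $\eta$ inherits absolute continuity from the summability of $\xi$, so that both the differentiation identity $\eta'=\xi$ a.e.\ and the application of the fundamental theorem of calculus to $e^{-C_1 t}\eta(t)$ are justified with no continuity assumption on $\xi$ itself. Everything else is an elementary integrating-factor computation.
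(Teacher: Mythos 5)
Your argument is correct and is precisely the standard proof of the integral form of Gr\"onwall's inequality given in the appendix of the cited reference (Evans); the paper itself states this result without proof, citing that source. The reduction to $\eta' \le C_1\eta + C_2$ via absolute continuity of the indefinite integral, the integrating factor $e^{-C_1 t}$, the crude bound $C_2 e^{-C_1 t} \le C_2$ that yields the stated (non-sharp) constant $C_2(1 + C_1 t e^{C_1 t})$, and the final substitution back into the hypothesis are all exactly as in that reference, so there is nothing to correct.
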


\subsection{Weak formulation\label{subsec:Weak_formulation}}

In this section, we introduce the concept of a weak solution. To this end, we first define the necessary function spaces that will serve as the framework for our analysis,
\begin{eqnarray} 
    & &\mathscr S_1 = \left\{ \mathbf{v} \in \left(L^2(\Omega)\right)^2 : \boldsymbol{\nabla} \cdot \mathbf{v} = 0, \ \left. \mathbf{v} \cdot \boldsymbol{\eta} \right\rvert_{\partial \Omega} = 0 \right\}, \nonumber \\ 
    & & \mathscr V_1 = \left\{\mathbf{v} \in \left(H_0^1\left(\Omega\right)\right)^2:  \boldsymbol{\nabla}\cdot \mathbf{v} = 0 \right\}, \nonumber \\ 
    & &\mathscr S_2=H^1\left(\Omega\right), \nonumber \\ 
    & & \mathscr V_2 = \left\{B \in H^2\left(\Omega\right): \left. \frac{\partial B}{\partial \boldsymbol{\eta}}\right\rvert_{\partial \Omega} = 0 \right\}.\nonumber
\end{eqnarray}

\begin{definition}[Weak solution\label{def: weak solution}]
    A pair $\left(\mathbf u, C\right)$ is said to be a weak solution to the problem \eqref{eq:continuity}--\eqref{eq:initial_condition}, if it satisfies the following:
    \begin{enumerate}
    \item $\left(\mathbf{u},C\right)\in\left(L^2\left(0,T;\mathscr V_1\right),L^2\left(0,T;\mathscr V_2\right)\right)$.
    \item $\mathbf u\left(\cdot,0\right)=\mathbf u_0 $ and $C\left(\cdot,0\right)=C_0$ $a.e.$ in $\Omega$.
\item For every $\mathbf v \in \mathscr V_1$ and for every $B\in \mathscr V_2$,
        \begin{align}
   \left \langle  \frac{\partial \mathbf{u}(t)}{\partial t},\mathbf{v} \right \rangle + \mu_e\left(\boldsymbol{\nabla} \mathbf{u}\left(t\right), \boldsymbol{\nabla} \mathbf{v}\right)  +  \left(F\left(C\left(t\right)\right)\mathbf{u}\left(t\right),\mathbf{v}\right) -\left\langle \boldsymbol{\nabla} \cdot \mathbf{T}\left(C\left(t\right)\right) , \mathbf{v} \right\rangle 
    - \left(\mathbf f\left(t\right),\mathbf v\right) = 0  \quad a.e. \ on \ \left(0,T\right), \label{var u} \\
   \left.
   \begin{aligned}
  \left(\frac{\partial C\left(t\right)}{\partial t},B\right) + d\left(\boldsymbol{\nabla} C\left(t\right), \boldsymbol{\nabla} B\right)
   + \left(\mathbf{u}\left(t\right) \cdot \boldsymbol{\nabla} C\left(t\right),B\right) 
   + \kappa \left(C\left(t\right)\left(1-C\left(t\right)\right),B\right)=0 \quad a.e. \ on \ \left(0,T\right)  \quad \text{for}\ N =2,\\
   \left\langle\frac{\partial C\left(t\right)}{\partial t},B\right\rangle + d\left(\boldsymbol{\nabla} C\left(t\right), \boldsymbol{\nabla} B\right)
   - \left(\mathbf{u}\left(t\right) C\left(t\right), \boldsymbol{\nabla} B\right)   
   + \kappa \left(C\left(t\right)\left(1-C\left(t\right)\right),B\right)=0 \quad a.e. \ on \ \left(0,T\right) \quad \text{for}\ N =3.
  \end{aligned}\right
  \}\label{var C}
  \end{align}
  \end{enumerate}
\end{definition}

\begin{remark}
 Throughout the subsequent analysis, the variational formulation corresponding to the case of $N=2$ is used for notational simplicity. Only difference arises while proving Lemma \ref{lemma:L3}, where two different cases are shown.
\end{remark}

\subsection{Assumptions}\label{subsec:assumptions}
\begin{assump} \label{assump:A1}
The function \( F \) is assumed to be a mapping \( F: L^2(0,T;\mathscr V_2) \to L^2(0,T;H^1(\Omega)) \) and satisfies a Lipschitz-like condition:
\begin{equation}
    \|F(C_1) - F(C_2)\|^2_{L^2(0,T;H^1)} \le M \|C_1 - C_2\|^2_{L^2(0,T;L^2)}.
\end{equation}
\end{assump}

\begin{assump} \label{assump:A2}
The external body force $\mathbf f\left(t,\mathbf x\right)$ is taken to belong to $L^2\left(0,T;\left(L^2\left(\Omega\right)\right)^2\right)$.
\end{assump}

\begin{assump} \label{assump:A3}
To guarantee the existence and uniqueness of the pressure, we impose the condition $\int_\Omega p\, dx = 0$.
\end{assump}
\noindent

Unless mentioned otherwise, in the remainder of the paper, $\epsilon$ is considered to be a small positive constant, $M$ is a fixed positive constant, and $A\left(\epsilon\right)$ is a positive constant depending on $\epsilon$.

\section{Main results \label{sec: main_result}}

\begin{theorem}\label{th: existence}
Let $\Omega \subset \mathds R^N$ (N=2,3) and suppose that assumptions~\eqref{assump:A1}--\eqref{assump:A3} are satisfied. Then for any initial data \((\mathbf{u}_0, C_0) \in\mathscr S_1 \times  \mathscr S_2\), there exists a time $T>0$ such that the coupled system \eqref{eq:continuity}--\eqref{eq:initial_condition} admits a weak solution \((\mathbf{u}, C)\), in the sense of Definition~\ref{def: weak solution}. Moreover, the solution satisfies 
\begin{align*}
    (\mathbf u, C)\in \left(L^2(0,T;\mathscr V_1)\cap \mathcal C([0,T]; \mathscr S_1)\right)\times\left(L^2(0,T,\mathscr V_2)\cap \mathcal C([0,T]; \mathscr S_2)\right), \\ 
    \text{and} \quad\left(\frac{\partial \mathbf u}{\partial t},\frac{\partial C}{\partial t}\right) \in\left(L^{4/N}\left(0,T;\mathscr V_1^*\right),L^{2}\left(0,T;\mathscr V_2^*\right)\right).
\end{align*}
\end{theorem}
\begin{theorem} \label{th: bddness & blow_up}
    Under the assumptions of Theorem \ref{th: existence}, let $(\mathbf u, C)$ be a weak solution of the system \eqref{eq:continuity}--\eqref{eq:initial_condition}.
    \begin{enumerate}
        \item If \ $0\leq C_0\leq 1$ almost everywhere in $\Omega$, then $0\leq C \leq 1$ almost everywhere in $\Omega \times (0,T)$ and the solution exists for any $T>0$. Moreover, if $0\leq C_0\leq \gamma_1<1$ almost everywhere in $\Omega$, then the solute concentration converges exponentially to $0$.
        \item If $C_0\geq \gamma_2>1$ almost everywhere in $\Omega$, then $ C \geq \gamma_2$ almost everywhere in $\Omega \times (0,T)$ and the solute concentration $C$ blows up in a finite time.
    \end{enumerate}
\end{theorem}
\begin{theorem} \label{th: Uniqueness}
 Let $\Omega \subset \mathds R^2$ and suppose that assumptions~\eqref{assump:A1}--\eqref{assump:A3} are satisfied. Then for any initial data \((\mathbf{u}_0, C_0) \in\mathscr S_1 \times  \mathscr S_2\), the weak solution is unique.
\end{theorem}

\subsection{Proof of Theorem \ref{th: existence}}

\subsubsection{A Priori Estimates \label{sec: A Priori Estimates}}

The proof of Theorem \ref{th: existence} is based on the well-known Faedo-Galerkin method, as detailed in the works of \citet{Lions1969}, \citet{Temam1979, Temam1983}, and \citet{Ladyzhenskaya1963}. We first construct approximate solutions using the semi-discrete Galerkin method. We consider $\{\mathbf w_k\}_{k\in \mathds N}$ and $\{z_k\}_{k\in \mathds N}$ to be basis of $\mathscr V_1$ and $\mathscr V_2$, respectively, where $z_k$'s are eigen vectors of $``-\Delta"$ associated with the Neumann boundary condition and $\lambda_j$'s are corresponding eigen values.

Let $\left(\mathscr V_1\right)_n = span\left(\mathbf{w}_1, \mathbf w_2, \dots, \mathbf w_n\right)$ and $\left(\mathscr V_2\right)_n = span\left(z_1, z_2, \dots, z_n \right)$. We define $\mathbf{u}_n:[0,T] \to \ \left(\mathscr V_1\right)_n$ and $C_n: [0,T] \to \ \left(\mathscr V_2\right)_n$ by 
\begin{center}
	$\mathbf{u}_n\left(t\right)=\sum_{j=1}^{n}\alpha_j^n\left(t\right)\mathbf w_j$ and $C_n\left(t\right)=\sum_{j=1}^{n}\beta_j^n\left(t\right)z_j$.
\end{center}
and consider the following system of differential equations:
\begin{align}
	\left(\frac{\partial C_n\left(t\right)}{\partial t},z_j\right) & + d\left(\boldsymbol{\nabla} C_n\left(t\right), \boldsymbol{\nabla} z_j\right)+\left(\mathbf{u}_n\left(t\right) \cdot \boldsymbol{\nabla} C_n\left(t\right),z_j\right)\nonumber \\ 
    & +\kappa \left(C_n\left(t\right)\left(1-C_n\left(t\right)\right),z_j\right) = 0 \quad \forall z_j \in \left(\mathscr V_2\right)_n \quad a.e. \ on \ \left(0,T\right), \label{eq: fdC} \\
	\left \langle  \frac{\partial \mathbf{u}_n\left(t\right)}{\partial t},\mathbf w_j \right \rangle & + \mu_e\left(\boldsymbol{\nabla} \mathbf{u}_n\left(t\right), \boldsymbol{\nabla} \mathbf w_j\right) + \left( F\left(C_n\left(t\right)\right)
 \mathbf{u}_n\left(t\right),\mathbf w_j\right) \nonumber \\ 
 & -\left \langle \boldsymbol{\nabla} \cdot \mathbf{T}\left(C_n\left(t\right)\right), \mathbf w_j \right \rangle - \left(\mathbf f\left(t\right),\mathbf w_j\right) = 0 \quad \forall \mathbf w_j \in \left(\mathscr V_1\right)_n \quad a.e. \ on \ \left(0,T\right)  \label{eq: fdu},
 \end{align} 
with initial conditions, 
\begin{equation}
    \mathbf{u}_n\left(0\right)=\sum_{j=1}^{n}\left(\mathbf{u}_0,\mathbf w_j\right)\mathbf w_j
\end{equation}
and 
\begin{equation}
    C_n\left(0\right)=\sum_{j=1}^{n}\left(C_0,{z_j}\right)z_j. 
\end{equation}
Equation \eqref{eq: fdC}--\eqref{eq: fdu} correspond to a nonlinear system of
ﬁrst-order ordinary differential equations of the following form,
\begin{align}
    \frac{d \vec{\mathfrak{V}}(t)}{dt}= \vec{\mathfrak{F}} \left(\alpha_1^n(t),\alpha_2^n(t),\dots,\alpha_n^n(t),\beta_1^n(t),\beta_2^n(t),\dots,\beta_n^n(t)\right),
\end{align}
where $\vec{\mathfrak{V}}(t)=(\alpha_1^n(t),\alpha_2^n(t),\dots,\alpha_n^n(t),\beta_1^n(t),\beta_2^n(t),\dots,\beta_n^n(t))$. From equation \eqref{exp: Korteweg stress}, we obtain the following identity for any \( \mathbf v \in \mathscr V_1 \), utilizing the divergence free property of $\mathbf v$,
\begin{align}  
    \left\langle \boldsymbol{\nabla} \cdot T(B(t)), \mathbf{v} \right\rangle = -\hat{\delta} \left( \boldsymbol{\nabla} B(t) \Delta B(t), \mathbf{v} \right)  \quad a.e.\ on \ (0,T)
    \label{eq: Korteweg expression},
\end{align}  
where $B\in L^2(0,T;\mathscr V_2).$
Consequently, every term in \( \vec{\mathfrak{F}}(t) \), except for \( \left( F(C_n(t)) \mathbf{u}_n(t), \mathbf{w}_j \right) \), can be expressed in the form  
\[
\mathscr{K}\prod_{i=1}^{k_1} \prod_{j=1}^{k_2} \alpha_{a_i}^n(t) \beta_{b_j}^n(t),
\]  
where $\mathscr{K}$ is a constant, and \( a_i, b_j \) are certain indices in \( \{1,2,\dots,n\} \).
Utilizing this along with assumption \eqref{assump:A1}, we invoke Carathéodory’s existence theorem to establish the existence of approximate solutions $\mathbf u_n$ and $C_n$ on a time interval $(0,T_m]$. The following lemmas provide uniform boundedness of $\mathbf u_n$ and $C_n$. 
\begin{lemma}\label{lemma:L1}
$C_n$ is uniformly bounded in $L^\infty\left(0,T;L^2\left(\Omega\right)\right)\cap L^2\left(0,T;H^1(\Omega)\right).$
\end{lemma}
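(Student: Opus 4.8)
The plan is to derive an energy estimate directly from the Galerkin system \eqref{eq: fdC} by choosing $C_n(t)$ itself as the test function, i.e.\ multiplying the $j$-th equation by $\beta_j^n(t)$ and summing over $j=1,\dots,n$ (which is legitimate since $C_n(t)\in(\mathscr V_2)_n=\mathrm{span}(z_1,\dots,z_n)$). First I would rewrite the resulting identity term by term: the time-derivative term becomes $\tfrac12\frac{d}{dt}\|C_n\|_{L^2}^2$; the diffusion term becomes $d\|\boldsymbol{\nabla} C_n\|_{L^2}^2$; the convective term $(\mathbf u_n\cdot\boldsymbol{\nabla} C_n, C_n)$ vanishes because $\boldsymbol{\nabla}\cdot\mathbf u_n=0$ and $\mathbf u_n=0$ on $\partial\Omega$ (writing it as $\tfrac12\int_\Omega \mathbf u_n\cdot\boldsymbol{\nabla}(C_n^2)$ and integrating by parts); and the reaction term expands as $\kappa\|C_n\|_{L^2}^2-\kappa\int_\Omega C_n^3\,dx$. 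This produces the balance
\begin{equation*}
\frac12\frac{d}{dt}\|C_n\|_{L^2}^2 + d\|\boldsymbol{\nabla} C_n\|_{L^2}^2 + \kappa\|C_n\|_{L^2}^2 = \kappa\int_\Omega C_n^3\,dx.
\end{equation*}

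The crux is controlling the cubic term on the right, which carries no definite sign since at this stage we do not yet know $0\le C_n\le 1$. I would estimate it by H\"older's inequality, $\int_\Omega C_n^3\,dx \le \|C_n\|_{L^2}\|C_n\|_{L^4}^2$, followed by the Gagliardo--Nirenberg inequality of Theorem \ref{Th: gagliardo} in the squared form $\|C_n\|_{L^4}^2\le \mathscr M^2\|C_n\|_{L^2}\|C_n\|_{H^1}$, to obtain $\kappa\int_\Omega C_n^3\,dx \le \kappa\mathscr M^2\|C_n\|_{L^2}^2\|C_n\|_{H^1}$. Combining the two dissipative terms on the left through $d\|\boldsymbol{\nabla} C_n\|_{L^2}^2+\kappa\|C_n\|_{L^2}^2\ge \min\{d,\kappa\}\|C_n\|_{H^1}^2$ and then splitting the right-hand side by Young's inequality so as to absorb a full copy of $\min\{d,\kappa\}\|C_n\|_{H^1}^2$ into the left, I expect to reach, with $y(t):=\|C_n(t)\|_{L^2}^2$, a Riccati-type differential inequality $y'(t)\le C_* \,y(t)^2$ with $C_*$ proportional to $\kappa^2\mathscr M^2/\min\{d,\kappa\}$, while retaining a nonnegative multiple of $\|C_n\|_{H^1}^2$ on the left for later use.

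A comparison argument for $y'\le C_* y^2$ then bounds $y$ on any interval strictly shorter than the blow-up time $1/(C_* y(0))$ of the comparison ODE. Since $C_n(0)$ is the $L^2$-orthogonal projection of $C_0$ onto $(\mathscr V_2)_n$, one has $\|C_n(0)\|_{L^2}\le\|C_0\|_{L^2}$, so this blow-up time is bounded below by the quantity $\tfrac{2\min\{\kappa,d\}}{\kappa^2\mathscr M^2\|C_0\|_{L^2}^2}$ uniformly in $n$, which is exactly the admissible range of $T$ in Theorem \ref{theorem}; this yields the uniform $L^\infty(0,T;L^2)$ bound and, incidentally, justifies continuing the Carath\'eodory solution from the local interval $(0,T_m]$ to all of $[0,T]$. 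Integrating the differential inequality over $[0,T]$ and using the just-obtained $L^\infty$ bound on $y$ to control $\int_0^T y^2\,dt$ then controls $\int_0^T\|C_n\|_{H^1}^2\,dt$, giving the uniform $L^2(0,T;H^1)$ bound and completing the proof. The main obstacle is precisely the sign-indefinite cubic term $\kappa\int_\Omega C_n^3\,dx$: unlike the linear part of the reaction it cannot be absorbed globally in time, and it is this term that forces the finite-time restriction on $T$ and fixes the constant appearing in the admissible interval.
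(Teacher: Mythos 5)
Your proposal is correct and follows essentially the same route as the paper: test with $C_n$ itself, drop the convective term by incompressibility, bound the sign-indefinite cubic reaction term via H\"older and Gagliardo--Nirenberg, absorb with Young's inequality into $\min\{d,\kappa\}\|C_n\|_{H^1}^2$, and integrate the resulting Riccati inequality $y'\le C_* y^2$ to obtain the $L^\infty(0,T;L^2)$ bound on exactly the interval $T<\tfrac{2\min\{\kappa,d\}}{\kappa^2\mathscr M^2\|C_0\|_{L^2}^2}$, after which the retained dissipation gives the $L^2(0,T;H^1)$ bound. The only cosmetic difference is that the paper keeps a free splitting parameter $\xi$ and optimizes it to $\min\{\kappa,d\}$ at the end, and handles the possible vanishing of $\|C_n(t)\|_{L^2}$ via an auxiliary set rather than your ODE comparison argument.
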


\begin{proof}
Let us consider the set
\begin{align*}
    \mathscr A_\epsilon = \left\{C_n: \int_\Omega C_n^2(\mathbf x,t)\ d\mathbf x  \le \epsilon \quad  a.e. \ on \ (0,T)\right\}.
\end{align*}
The construction of $\mathscr A_\epsilon$ gives us \begin{align}
    \|C_n\|_{L^\infty(0,T;L^2(\Omega))}<\infty \quad \forall C_n \in \mathscr A_\epsilon.
\end{align}
We now show the uniform boundedness of $C_n$ in $L^\infty(0,T;L^2(\Omega))$ for all $C_n\in \mathscr A_\epsilon^c.$
	By multiplying equation \eqref{eq: fdC} with $\beta_j^n$ and summing the resulting equations over $j=1$ to $ n$, and performing integration by parts, we obtain, 
	\begin{align}
    \frac{1}{2}\frac{d}{dt}\|C_n\left(t\right)\|_{L^2}^2+d\|\boldsymbol{\nabla}{C_n\left(t\right)}\|^2_{L^2}+\kappa \|C_n\left(t\right)\|_{L^2}^2 & =\kappa \left(C_n^2\left(t\right),C_n\left(t\right)\right) \nonumber \\ 
    & \le \kappa\|C_n(t)\|_{L^4}^2\|C_n(t)\|_{L^2} \quad a.e.\  on \ \left(0,T\right). \label{uniform boundedness of C_n 1st}
	\end{align}
Applying Gagliardo-Nirenberg (Theorem \ref{Th: gagliardo}) and Young's inequalities on the last term, we obtain
\begin{align}
    \frac{1}{2}\frac{d}{dt}\|C_n\left(t\right)\|_{L^2}^2+(d-\epsilon)\|\boldsymbol{\nabla}{C_n\left(t\right)}\|^2_{L^2}+(\kappa - \epsilon ) \|C_n\left(t\right)\|_{L^2}^2
    \le A(\epsilon) \|C_n\left(t\right)\|_{L^2}^{2N}\quad a.e.\  on \ \left(0,T\right). \label{uniform boundedness of C_n 2nd}
\end{align}
We choose $\epsilon$ such that the second and third terms of left hand side are non-negative. So, from inequality \eqref{uniform boundedness of C_n 2nd} we get,
\begin{align}
    \frac{d}{dt}\|C_n\left(t\right)\|_{L^2}^2\le  A(\epsilon)\|C_n\left(t\right)\|_{L^2}^{2N}\quad a.e.\  on \ \left(0,T\right). \label{uniform boundedness of C_n 3rd}
\end{align}
If \(\|C_0\|_{L^2} = 0\), then $C_n(0) = 0$ for all $n$. Then from inequality \eqref{uniform boundedness of C_n 3rd} we have $C_n=0$ for all \(n\in \mathds N,\) which eventually gives $C=0$ almost everywhere in \(\Omega\times(0,T)\). 

If \(\|C_0\|_{L^2} \ne 0\), then since $C_n(0)$ converges to $C_0$ in $L^2$ norm so there exist a positive integer $k$ such that $\|C_n(0)\|_{L^2(\Omega)}>0$ for all $n \ge k$. 
From inequality \eqref{uniform boundedness of C_n 3rd}, integrating from $0$ to $\tau$, for some $\tau \in (0,T)$, we get, for all $n \ge k$,
\begin{align}
    -\frac{1}{\|C_n\left(\tau\right)\|_{L^2}^{2N-2}} \le  -\frac{1}{\|C_n\left(0\right)\|_{L^2}^{2N-2}}  + A(\epsilon)\tau \le  -\frac{1}{\|C_0\|_{L^2}^{2N-2}} + A(\epsilon)\tau \quad \forall C_n \in \mathscr A_\epsilon^c.
\end{align}
This implies, there exists a $T>0$ such that $C_n$ is uniformly bounded in $L^\infty \left(0, T; L^2(\Omega)\right)$. Utilizing the uniform boundedness of $\|C_n\|_{L^\infty \left(0, T; L^2(\Omega)\right)}$, 
from inequality \eqref{uniform boundedness of C_n 2nd} we can say that $C_n$ is also uniformly bounded in $L^2\left(0,T;H^1\left(\Omega\right)\right)$.
\end{proof}
\begin{remark}\label{rem: bound of C(1-C)}
Using Lemma \ref{lemma:L1}, it can be shown that $C_n\left(1-C_n\right)$ is uniformly bounded in $L^2\left(0,T; L^2(\Omega)\right)$ space.
\end{remark}
\begin{lemma}
	$\mathbf{u}_n$ is uniformly bounded in $L^\infty\left(0,T; \mathscr S_1\right)\cap L^2\left(0,T;\mathscr V_1\right)$ and $C_n$ is uniformly bounded in $L^\infty\left(0,T; \mathscr S_2\right)\cap L^2\left(0,T;\mathscr V_2\right)$.\label{lemma:L2}
\end{lemma}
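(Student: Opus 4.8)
The plan is to promote the first-order energy bounds of Lemma \ref{lemma:L1} by one derivative, testing the Galerkin system \eqref{eq: fdC}--\eqref{eq: fdu} with the solutions themselves and with the negative Laplacian of the concentration, and then to close the resulting coupled differential inequality with Gr\"onwall's inequality (Theorem \ref{Th: gronwall}), using the bounds already in hand from Lemma \ref{lemma:L1} to control every time-dependent coefficient.

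First I would multiply \eqref{eq: fdu} by $\alpha_j^n(t)$ and sum over $j$ --- equivalently, test with $\mathbf u_n(t)\in(\mathscr V_1)_n$. Using $\langle\partial_t\mathbf u_n,\mathbf u_n\rangle=\frac12\frac{d}{dt}\|\mathbf u_n\|_{L^2}^2$ and discarding the nonnegative term $(F(C_n)\mathbf u_n,\mathbf u_n)\ge 0$ (valid since $F(C_n)=\mu(C_n)/K(C_n)>0$), I obtain a velocity energy identity with dissipation $\mu_e\|\boldsymbol\nabla\mathbf u_n\|_{L^2}^2$ and the Korteweg term evaluated through \eqref{eq: Korteweg expression}. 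Since the $z_j$ are $-\Delta$ eigenfunctions with Neumann data, $-\Delta C_n(t)\in(\mathscr V_2)_n$ is an admissible test function; multiplying \eqref{eq: fdC} by $\lambda_j\beta_j^n(t)$ and summing gives, after integration by parts (boundary terms vanish because $\partial_{\boldsymbol\eta}C_n=0$), a concentration identity with dissipation $d\|\Delta C_n\|_{L^2}^2$. Adding the two and writing $Z(t):=\|\mathbf u_n\|_{L^2}^2+\|\boldsymbol\nabla C_n\|_{L^2}^2$ isolates $\mu_e\|\boldsymbol\nabla\mathbf u_n\|_{L^2}^2+d\|\Delta C_n\|_{L^2}^2$ as the good dissipative part on the left.

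A useful simplification comes from the two cubic couplings --- the Korteweg term $\hat\delta\int\Delta C_n\,(\mathbf u_n\cdot\boldsymbol\nabla C_n)$ from the velocity equation and the convective term $(\mathbf u_n\cdot\boldsymbol\nabla C_n,\Delta C_n)$ from the concentration equation. Using $\boldsymbol\nabla\cdot\mathbf u_n=0$ together with $\mathbf u_n|_{\partial\Omega}=0$, integration by parts turns each of these into $\mp\int\boldsymbol\nabla\mathbf u_n:\boldsymbol\nabla C_n\otimes\boldsymbol\nabla C_n$, so upon adding they collapse to a single term bounded by $|1-\hat\delta|\,\|\boldsymbol\nabla\mathbf u_n\|_{L^2}\|\boldsymbol\nabla C_n\|_{L^4}^2$. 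I would then invoke the Gagliardo--Nirenberg inequality (Theorem \ref{Th: gagliardo}) on $\boldsymbol\nabla C_n$, together with the Neumann elliptic estimate $\|\boldsymbol\nabla C_n\|_{H^1}\le C(\|\Delta C_n\|_{L^2}+\|\boldsymbol\nabla C_n\|_{L^2}+\|C_n\|_{L^2})$, to write $\|\boldsymbol\nabla C_n\|_{L^4}^2\le CM^2\|\boldsymbol\nabla C_n\|_{L^2}(\|\Delta C_n\|_{L^2}+\|\boldsymbol\nabla C_n\|_{L^2}+\|C_n\|_{L^2})$. The forcing term is handled by Assumption \eqref{assump:A2} and Young's inequality, and the reaction term $\kappa(C_n(1-C_n),\Delta C_n)$ by Remark \ref{rem: bound of C(1-C)} and Young, both producing an $L^1(0,T)$ source $h(t)$ with bound independent of $n$. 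After applying Young's inequality to move small multiples of $\|\boldsymbol\nabla\mathbf u_n\|_{L^2}^2$ and $\|\Delta C_n\|_{L^2}^2$ to the left, one is left with a differential inequality of the schematic form $\frac{d}{dt}Z + c_1\|\boldsymbol\nabla\mathbf u_n\|_{L^2}^2 + c_2\|\Delta C_n\|_{L^2}^2 \le \Psi(t)Z + h(t)$, where $\Psi$ is controlled through the $L^\infty(0,T;L^2)$ and $L^2(0,T;H^1)$ bounds on $C_n$ from Lemma \ref{lemma:L1}.

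The hard part is exactly this absorption step. In two dimensions the surviving cubic contribution $\|\boldsymbol\nabla\mathbf u_n\|_{L^2}\,\|\boldsymbol\nabla C_n\|_{L^2}\,\|\Delta C_n\|_{L^2}$ sits at the critical Gagliardo--Nirenberg scaling: its two top-order factors $\|\boldsymbol\nabla\mathbf u_n\|_{L^2}$ and $\|\Delta C_n\|_{L^2}$ already carry total degree two, so Young's inequality produces a coefficient proportional to $\|\boldsymbol\nabla C_n\|_{L^2}^2$ multiplying the dissipation rather than a constant. The resolution --- and the reason the estimate is only local in time, consistent with the interval in Theorem \ref{theorem} --- is to use the uniform-in-$n$ bound $\int_0^T\|\boldsymbol\nabla C_n\|_{L^2}^2\,dt<\infty$ from Lemma \ref{lemma:L1} in a continuation argument: on a time interval on which $Z$ has not yet grown past a threshold the offending coefficient stays below $\min\{c_1,c_2\}$, the dissipation can be kept on the left, and the remaining inequality is of the linear form required by Theorem \ref{Th: gronwall}. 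Gr\"onwall then yields the uniform $L^\infty(0,T;\mathscr S_1)$ bound on $\mathbf u_n$ and, combined with the $L^\infty(0,T;L^2)$ bound of Lemma \ref{lemma:L1}, the $L^\infty(0,T;\mathscr S_2)$ bound on $C_n$; integrating the differential inequality over $(0,T)$ controls $\int_0^T(\|\boldsymbol\nabla\mathbf u_n\|_{L^2}^2+\|\Delta C_n\|_{L^2}^2)\,dt$, which gives the $L^2(0,T;\mathscr V_1)$ bound for $\mathbf u_n$ and, through the same Neumann elliptic estimate, the full $L^2(0,T;\mathscr V_2)$ (that is, $H^2$) bound for $C_n$.
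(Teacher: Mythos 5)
There is a genuine gap, and it is exactly at the step you yourself flag as ``the hard part.'' The paper avoids that difficulty entirely by a different choice of weight: it tests the concentration equation not with $-\Delta C_n$ but with $-\hat\delta\,\Delta C_n$ (i.e.\ multiplies \eqref{eq: fdC} by $\hat\delta\lambda_j\beta_j^n$), so that the energy functional is $\|\mathbf u_n\|_{L^2}^2+\hat\delta\|\boldsymbol\nabla C_n\|_{L^2}^2$. With this weight the convective term $\hat\delta\left(\mathbf u_n\cdot\boldsymbol\nabla C_n,\Delta C_n\right)$ and the Korteweg term $-\hat\delta\left(\Delta C_n\,\boldsymbol\nabla C_n,\mathbf u_n\right)$ are the \emph{same} integral with opposite signs and equal coefficients, and they cancel identically --- no integration by parts, no residual cubic term. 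What remains on the right-hand side of \eqref{eq: L2C+u1} is only $\frac12\left(\|\mathbf f\|_{L^2}^2+\|\mathbf u_n\|_{L^2}^2\right)+A(\epsilon)\|C_n(1-C_n)\|_{L^2}^2$, which is linear in the energy plus an $L^1(0,T)$ source (Remark \ref{rem: bound of C(1-C)} and Assumption \eqref{assump:A2}), so Gr\"onwall closes immediately.

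Your version, with weight $1$, leaves the residual $(1-\hat\delta)\int(\mathbf u_n\cdot\boldsymbol\nabla C_n)\Delta C_n$, and your own scaling analysis is correct: after Gagliardo--Nirenberg and Young this produces a term of the form $A(\epsilon)\|\boldsymbol\nabla C_n\|_{L^2}^2\|\Delta C_n\|_{L^2}^2$ (or equivalently a coefficient $\|\boldsymbol\nabla C_n\|_{L^2}^2$ in front of the dissipation), which cannot be handled by Theorem \ref{Th: gronwall}: the factor multiplies $\|\Delta C_n\|_{L^2}^2$, which is not part of $Z$ and is not a priori controlled, so an integrable-in-time coefficient does not help. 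Your proposed continuation argument requires the \emph{pointwise-in-time} smallness $\|\boldsymbol\nabla C_n(t)\|_{L^2}^2\le c_1c_2/|1-\hat\delta|^2$; Lemma \ref{lemma:L1} only gives $\int_0^T\|\boldsymbol\nabla C_n\|_{L^2}^2\,dt<\infty$, and $Z(0)=\|\mathbf u_0\|_{L^2}^2+\|\boldsymbol\nabla C_0\|_{L^2}^2$ need not lie below the threshold, so the argument starts only for small data or for $\hat\delta$ close to $1$. The restriction on $T$ in Theorem \ref{theorem} comes from Lemma \ref{lemma:L1} (the quadratic reaction term), not from this step. The fix is simply to replace your weight $1$ by $\hat\delta$; the rest of your outline (testing with $\mathbf u_n$, discarding $(F(C_n)\mathbf u_n,\mathbf u_n)\ge0$, handling $\mathbf f$ and $\kappa C_n(1-C_n)$ by Young, recovering the $H^2$ bound from $\|\Delta C_n\|_{L^2}$) then matches the paper.
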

\begin{proof}Multiplying equation \eqref{eq: fdC} by $\hat \delta \lambda_j \beta_j^n$ and summing over $j=1$ to $n$,  we obtain the following equation,
\begin{align}
\frac{\hat{\delta}}{2} \frac{d}{dt} \left\| \boldsymbol{\nabla} C_n\left(t\right) \right\|_{L^2}^2 & + d\hat{\delta} \left\| \Delta C_n\left(t\right) \right\|_{L^2}^2
= \hat{\delta} \left( \mathbf{u}_n\left(t\right) \cdot \boldsymbol{\nabla} C_n\left(t\right), \Delta C_n\left(t\right) \right) \nonumber \\ & + \hat \delta \kappa\left(C_n\left(t\right)\left(1-C_n\left(t\right)\right),\Delta C_n\left(t\right)\right) \quad a.e.\  on \ \left(0,T\right). \label{eq: L2C}
\end{align}
Applying Cauchy-Schwarz and Young's inequality on the last term of \eqref{eq: L2C}, we get the following inequality,
\begin{align}
\frac{\hat{\delta}}{2} \frac{d}{dt} \left\| \boldsymbol{\nabla} C_n\left(t\right) \right\|_{L^2}^2 + \left(d\hat{\delta}-\epsilon\right) \left\| \Delta C_n\left(t\right) \right\|_{L^2}^2
& \le \hat{\delta} \left( \mathbf{u}_n\left(t\right) \cdot \boldsymbol{\nabla} C_n\left(t\right), \Delta C_n\left(t\right) \right)\nonumber\\ 
& + A\left(\epsilon\right)\|C_n\left(t\right)\left(1-C_n\left(t\right)\right)\|_{L^2}^2 \quad a.e.\  on \ \left(0,T\right). \label{eq: L2 nabC 2}
\end{align}
Multiplying equation \eqref{eq: fdu} by $\alpha_j^n\left(t\right)$ and then summing from $j=1\ \text{to} \ n$ we obtain the following equation,
  \begin{align}
      \frac{1}{2} \frac{d}{dt} \left\| \mathbf{u}_n\left(t\right) \right\|_{L^2}^2 +\mu_e\left(\boldsymbol{\nabla} \mathbf{u}_n\left(t\right),\boldsymbol{\nabla} \mathbf{u}_n\left(t\right)\right) + \left( F\left(C_n\left(t\right)\right) 
 \mathbf{u}_n\left(t\right),\mathbf{u}_n\left(t\right)\right) = \left \langle \boldsymbol{\nabla} \cdot \mathbf{T}\left(C_n\left(t\right)\right),\mathbf{u}_n\left(t\right) \right \rangle \nonumber\\+\left(\mathbf f\left(t\right),\mathbf{u}_n\left(t\right)\right) \quad a.e. \ on\ \left(0,T\right). \label{L2u1}
        \end{align}
Utilizing the expression of Korteweg stress in \eqref{eq: Korteweg expression}, from equation \eqref{L2u1} we obtain,
\begin{align}
\frac{1}{2} \frac{d}{dt} \left\| \mathbf{u}_n\left(t\right) \right\|_{L^2}^2 + &\mu_e \left\| \boldsymbol{\nabla} \mathbf{u}_n\left(t\right) \right\|_{L^2}^2 + \|\sqrt{ F\left(C_n\left(t\right)\right) }\mathbf u_n\left(t\right) \|_{L^2}^2
\le -\hat{\delta} \left( \Delta C_n\left(t\right) \boldsymbol{\nabla} C_n\left(t\right), \mathbf{u}_n\left(t\right) \right) \nonumber \\
&\quad + \frac{1}{2}\left(\| \mathbf{f}\left(t\right)\|_{L^2}^2+\|\mathbf{u}_n\left(t\right) \|_{L^2}^2\right)\quad a.e. \ on \ \left(0,T\right), \label{eq: L2u2}
\end{align}
where $\hat \delta > 0$ is Korteweg parameter.
Adding inequality \eqref{eq: L2 nabC 2} and \eqref{eq: L2u2} we get the following:
\begin{align}
\frac{1}{2} \frac{d}{dt} \left\| \mathbf{u}_n\left(t\right) \right\|_{L^2}^2 
& + \hat{\delta} \frac{1}{2} \frac{d}{dt} \left\| \boldsymbol{\nabla} C_n\left(t\right) \right\|_{L^2}^2+\mu_e \left\| \boldsymbol{\nabla} \mathbf{u}_n\left(t\right) \right\|_{L^2}^2 \nonumber \\ 
& + \|\sqrt{ F\left(C_n\left(t\right)\right) }\mathbf u_n\left(t\right) \|_{L^2}^2 
+ \left(d\hat{\delta}-\epsilon\right)\|\Delta C_n\left(t\right)\|_{L^2}^2 \nonumber \\ 
& \leq \frac{1}{2}\left(\| \mathbf{f}\left(t\right)\|_{L^2}^2+\|\mathbf{u}_n\left(t\right) \|_{L^2}^2\right) \nonumber \\ 
& + A\left(\epsilon\right)\|C_n\left(t\right)\left(1-C_n\left(t\right)\right)\|_{L^2}^2 \quad a.e. \ on \ \left(0,T\right). \label{eq: L2C+u1}
\end{align}
Adding $\frac{1}{2}\hat{\delta}\left\|\boldsymbol{\nabla} C_n\left(t\right) \right\|_{L^2}^2  $ on the right hand side of inequality \eqref{eq: L2C+u1} and ignoring the positive terms of the left hand side and integrating from $0$ to $\tau$ for some $\tau \in \left(0,T\right)$ we get,
\begin{align}
\frac{1}{2} \left\| \mathbf{u}_n\left(\tau\right) \right\|_{L^2}^2 + \hat{\delta} \frac{1}{2} \left\| \boldsymbol{\nabla} C_n\left(\tau\right) \right\|_{L^2}^2
& \leq  \frac{1}{2}\left(\|\mathbf{u}_0\|_{L^2}^2+\hat \delta \|\boldsymbol{\nabla} C_0\|^2_{L^2}\right)+\int_0^\tau \frac{1}{2} \left\| \mathbf{f}\left(t\right) \right\|_{L^2}^2 \nonumber\\
& + \int_0^\tau \frac{1}{2}\left(\hat \delta \|\boldsymbol{\nabla} C_n\left(t\right)\|_{L^2}^2 + \| \mathbf{u}_n\left(t\right) \|_{L^2}^2\right) \nonumber \\ 
& + \int_0^\tau A\left(\epsilon\right)\|C_n\left(t\right)\left(1-C_n\left(t\right)\right)\|_{L^2}^2 \quad a.e. \ on \ \left(0,T\right). \label{eq: L2C+u}
\end{align}
Using Gr\"onwall’s inequality (Theorem \ref{Th: gronwall}) and Lemma \ref{lemma:L1} it can be demonstrated that the velocity field sequence $\mathbf{u}_n$ remains uniformly bounded in $L^\infty\left(0, T;\mathscr S_1\right)$, and the concentration sequence $C_n$ is uniformly bounded in the $L^\infty\left(0, T;\mathscr S_2\right)$ space.
Using the above result from \eqref{eq: L2C+u1} we can say $\mathbf u_n$ is uniformly bounded in $L^2\left(0,T;\mathscr V_1\right)$ and $C_n$ is uniformly bounded in $L^2\left(0,T;\mathscr V_2\right)$ space.
\end{proof}

\begin{lemma}
$ \displaystyle \frac{\partial C_n}{\partial t}$ is uniformly bounded in $L^{2}\left(0,T;L^2(\Omega)\right)$ for $N=2$, and $L^{2}\left(0,T;\mathscr V_2^*\right)$ for $N=3$. \label{lemma:L3}
\end{lemma}  
\begin{proof}
\textbf{For $\boldsymbol{N=2}$:}
Multiplying equation \eqref{eq: fdC} by \(  \displaystyle \frac{\partial \beta_{j}^{n} \left(t\right)}{\partial t} \), summing over \( j = 1 \) to \( n \), and applying the H\"older's inequality, we obtain 
\begin{align}
\left \| \frac{\partial C_n \left(t\right)}{\partial t} \right \|_{L^2}^2 
&\leq d \|\Delta C_n \left(t\right)\|_{L^2}\lf \lVert \frac{\partial C_n \left(t\right)}{\partial t} \rg\rVert_{L^2}  
+ \|\mathbf u_n \left(t\right) \|_{L^4}\| \boldsymbol{\nabla} C_n \left(t\right) \|_{L^4} \left \lV \frac{\partial C_n \left(t\right)}{\partial t} \right \rV_{L^{2}} \nonumber \\ 
&\quad + \kappa \|C_n \left(t\right)\left(1 - C_n \left(t\right)\right)\|_{L^2}\lf \| \frac{\partial C_n \left(t\right)}{\partial t}\rg \|_{L^2}, \quad a.e.\ on \ \left(0,T\right).
\label{eq: estimate of del c/del t 1st}
\end{align}
Using Gagliardo-Nirenberg inequality (Theorem \ref{Th: gagliardo}) in the second term of the right-hand side, we obtain the uniform boundedness of $C_n$ in $L^2(0,T;L^2(\Omega).$\\
\textbf{For $\boldsymbol{N=3}$:}
Let $v\in \mathscr V_2$ be any zero vector.
Then $v$ can be written as
\begin{align*}
    v = v^1 + v^2 \quad \text{where}\ v^1\in \left(\mathscr V_2\right)_n,\ v^2\in \left(\mathscr V_2\right)_n ^ \perp.
\end{align*}
Then from equation \eqref{eq: fdC} we get,
\begin{align}
    \left|\left\langle\frac{\partial C_n \left(t\right)}{\partial t},v\right\rangle\right|\leq \left|
    \left(\nabla C_n(t),\boldsymbol\nabla v^1\right)\right| + \left|
    \left( C_n(t)\mathbf{u}_n,\boldsymbol\nabla v^1\right)\right| + \kappa\left|
    \left( C_n(1-C_n),v^1\right)\right|
\end{align}
Now applying Cauchy-Schwarz inequality to the terms of the right-hand side and taking $supremum$ over all $ v \in \mathscr V_2$ with $\|v\|_{\mathscr V_2} \le1$, we get,
 \begin{align}
     \left\|\frac{\partial C_n \left(t\right)}{\partial t}\right\|_{\mathscr V_2^*} &\leq \|\boldsymbol\nabla C_n(t)\|_{L^2} + \|C_n(t)\|_{L^4}\|u_n(t)\|_{L^4} + \kappa \|C_n(t)(1-C_n(t))\|_{L^2}\nonumber
     \\
     &\leq \|\boldsymbol\nabla C_n(t)\|_{L^2} + \|C_n(t)\|_{H^1}\|\boldsymbol\nabla u_n(t)\|_{L^2} + \kappa \|C_n(t)(1-C_n(t))\|_{L^2} \label{eq: estimate of del c/del t 2nd}
      \end{align}
The first two terms in the right-hand side of inequality \eqref{eq: estimate of del c/del t 2nd} can be effectively controlled using Lemma \ref{lemma:L2}, while the last term is bounded via Remark \ref{rem: bound of C(1-C)}. Consequently, we establish the uniform boundedness of $ \displaystyle  \frac{\partial C_n}{\partial t} $ in $L^2\left(0,T;\mathscr V_2^*\right)$. 
\end{proof}
\begin{lemma}
	$ \displaystyle \frac{\partial \mathbf{u}_n}{\partial t}$ is uniformly bounded in $L^{4/N}\left(0,T;\mathscr V_1^*\right)$ for $N=2,3$. \label{lemma:L4}
\end{lemma}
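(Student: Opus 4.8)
The plan is to estimate the dual norm $\|\partial_t\mathbf u_n(t)\|_{\mathscr V_1^*}=\sup\{\langle\partial_t\mathbf u_n(t),\mathbf v\rangle:\mathbf v\in\mathscr V_1,\ \|\mathbf v\|_{\mathscr V_1}\le 1\}$ pointwise in $t$, and then to show the resulting bound is square-integrable in time, uniformly in $n$. Since $\partial_t\mathbf u_n(t)\in(\mathscr V_1)_n$, for any $\mathbf v\in\mathscr V_1$ I would write $\langle\partial_t\mathbf u_n(t),\mathbf v\rangle=\langle\partial_t\mathbf u_n(t),P_n\mathbf v\rangle$, where $P_n$ is the orthogonal projection onto $(\mathscr V_1)_n$. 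Taking $\{\mathbf w_k\}$ to be the eigenfunctions of the Stokes operator (orthogonal in both $\mathscr S_1$ and $\mathscr V_1$) guarantees $\|P_n\mathbf v\|_{\mathscr V_1}\le\|\mathbf v\|_{\mathscr V_1}$ and $\|P_n\mathbf v\|_{L^4}\le c\|\mathbf v\|_{\mathscr V_1}$. Inserting $P_n\mathbf v$ as the test function in the Galerkin identity \eqref{eq: fdu} then expresses $\langle\partial_t\mathbf u_n(t),\mathbf v\rangle$ as the sum of a viscous term $-\mu_e(\boldsymbol{\nabla}\mathbf u_n,\boldsymbol{\nabla} P_n\mathbf v)$, a mobility term $-(F(C_n)\mathbf u_n,P_n\mathbf v)$, a Korteweg term $\langle\boldsymbol{\nabla}\cdot\mathbf T(C_n),P_n\mathbf v\rangle$, and a forcing term $-(\mathbf f,P_n\mathbf v)$.

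Three of these four contributions are routine. The viscous term is bounded by $\mu_e\|\boldsymbol{\nabla}\mathbf u_n\|_{L^2}\|\mathbf v\|_{\mathscr V_1}$, and $\|\boldsymbol{\nabla}\mathbf u_n\|_{L^2}\in L^2(0,T)$ because $\mathbf u_n$ is uniformly bounded in $L^2(0,T;\mathscr V_1)$ by Lemma \ref{lemma:L2}. The forcing term is bounded by $\|\mathbf f\|_{L^2}\|\mathbf v\|_{\mathscr V_1}$, which lies in $L^2(0,T)$ by assumption \eqref{assump:A2}. For the mobility term I would apply H\"older with exponents $(4,2,4)$ and the Gagliardo--Nirenberg inequality (Theorem \ref{Th: gagliardo}) to obtain $|(F(C_n)\mathbf u_n,P_n\mathbf v)|\le c\|F(C_n)\|_{H^1}\|\mathbf u_n\|_{L^2}\|\mathbf v\|_{\mathscr V_1}$; since $\mathbf u_n$ is uniformly bounded in $L^\infty(0,T;\mathscr S_1)$ and $F(C_n)$ is uniformly bounded in $L^2(0,T;H^1)$ (by assumption \eqref{assump:A1} together with the uniform bound on $C_n$ in $L^2(0,T;\mathscr V_2)$ from Lemma \ref{lemma:L2}), this factor belongs to $L^2(0,T)$.

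The Korteweg term is the main obstacle, and the difficulty is time integrability rather than the spatial estimate. If one uses the identity \eqref{eq: Korteweg expression} in the $\Delta C_n$ form already exploited in Lemma \ref{lemma:L2}, one is led to the bound $\hat\delta\|\boldsymbol{\nabla} C_n\|_{L^4}\|\Delta C_n\|_{L^2}\|\mathbf v\|_{\mathscr V_1}$; after invoking Gagliardo--Nirenberg for $\|\boldsymbol{\nabla} C_n\|_{L^4}$ this produces a factor behaving like $\|C_n\|_{H^2}^{3/2}$, whose square is \emph{not} controlled by the available estimate $C_n\in L^2(0,T;\mathscr V_2)$ and would only place $\partial_t\mathbf u_n$ in $L^{4/3}(0,T;\mathscr V_1^*)$. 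To circumvent this I would instead keep the Korteweg stress in divergence form: integrating \eqref{exp: Korteweg stress} by parts against the divergence-free, $H^1_0$ test function $P_n\mathbf v$ annihilates the pressure-like gradient term and leaves $\langle\boldsymbol{\nabla}\cdot\mathbf T(C_n),P_n\mathbf v\rangle=\hat\delta(\boldsymbol{\nabla} C_n\boldsymbol{\nabla} C_n,\boldsymbol{\nabla} P_n\mathbf v)$, which involves only first derivatives of $C_n$ and is bounded by $\hat\delta\|\boldsymbol{\nabla} C_n\|_{L^4}^2\|\mathbf v\|_{\mathscr V_1}$. The decisive point is that $\|\boldsymbol{\nabla} C_n\|_{L^4}^2\in L^2(0,T)$: by Gagliardo--Nirenberg $\|\boldsymbol{\nabla} C_n\|_{L^4}^4\le M^4\|\boldsymbol{\nabla} C_n\|_{L^2}^2\|C_n\|_{H^2}^2$, so integrating in time and using the uniform bounds on $\|\boldsymbol{\nabla} C_n\|_{L^\infty(0,T;L^2)}$ and $\|C_n\|_{L^2(0,T;\mathscr V_2)}$ from Lemma \ref{lemma:L2} shows $\boldsymbol{\nabla} C_n\in L^4(0,T;L^4)$ uniformly in $n$, whence $\|\boldsymbol{\nabla} C_n\|_{L^4}^2\in L^2(0,T)$.

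Combining the four estimates yields, for a.e.\ $t$, the pointwise bound $\|\partial_t\mathbf u_n(t)\|_{\mathscr V_1^*}\le\mu_e\|\boldsymbol{\nabla}\mathbf u_n\|_{L^2}+c\|F(C_n)\|_{H^1}\|\mathbf u_n\|_{L^2}+c\hat\delta\|\boldsymbol{\nabla} C_n\|_{L^4}^2+\|\mathbf f\|_{L^2}$. Squaring, integrating over $(0,T)$, and using that each right-hand factor is uniformly bounded in $L^2(0,T)$ by the cited lemmas and assumptions then delivers the uniform bound of $\partial_t\mathbf u_n$ in $L^2(0,T;\mathscr V_1^*)$. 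I expect the only genuine difficulty to be the Korteweg term, and specifically the observation that its divergence form must be used so that the estimate lands in $L^2$ in time rather than merely in $L^{4/3}$.
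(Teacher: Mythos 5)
Your proposal is correct and follows essentially the same route as the paper: project the test function onto $(\mathscr V_1)_n$, estimate the viscous, mobility, and forcing terms as you do, and handle the Korteweg term in divergence form via $\| D_i(D_j C_n\, D_k C_n)\|_{\mathscr V_1^*}\le \|D_jC_n\|_{L^4}\|D_kC_n\|_{L^4}$ followed by Gagliardo--Nirenberg, which is exactly the paper's estimate \eqref{est: korteweg stress}. Your explicit observation that the $\Delta C_n$ form would only yield $L^{4/3}$ integrability in time is a point the paper leaves implicit, but the underlying argument is the same.
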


\begin{proof}
		Let $\mathbf w\in \mathscr V_1$ be any non-zero vector. 
  Then $\mathbf w$ can be written as
  \begin{align*}
      \mathbf w=\mathbf w^1+\mathbf w^2\quad \text{where}\ \mathbf w^1\in \left(\mathscr V_1\right)_n,\ \mathbf w^2\in \left(\mathscr V_1\right)_n ^ \perp.
 \end{align*}
 It can be shown that
 \begin{align*}
     \left \langle \frac{\partial \mathbf{u}_n\left(t\right)}{\partial t},\mathbf w^2 \right \rangle = 0.
 \end{align*}
 Then from equation \eqref{eq: fdu} we get,
 \begin{align}
     \left \lvert \left \langle \frac{\partial \mathbf{u}_n\left(t\right)}{\partial t},\mathbf w \right \rangle \right \rvert 
     & = \lf| \left \langle \frac{\partial \mathbf{u}_n\left(t\right)}{\partial t},\mathbf w^1 \right \rangle \rg| \nonumber \\ 
     & = \left| \mu_e\left(\boldsymbol{\nabla} \mathbf{u}_n\left(t\right), \boldsymbol{\nabla} \mathbf w^1\right) +  \left(F\left(C_n\left(t\right)\right) \mathbf{u}_n\left(t\right),\mathbf w^1\right)-\left \langle \boldsymbol{\nabla} \cdot \mathbf{T}\left(C_n\left(t\right)\right) , \mathbf w^1 \right \rangle  + \left(\mathbf f\left(t\right),\mathbf w^1\right) \right| \quad  a.e. \ on \ \left(0,T\right) \nonumber\\ 
     & \le \left| \mu_e\left(\boldsymbol{\nabla} \mathbf{u}_n\left(t\right), \boldsymbol{\nabla} \mathbf w^1\right)\right| + \left|\left(  F\left(C_n\left(t\right)\right) \mathbf{u}_n\left(t\right),\mathbf w^1\right)\right|+\left|\left \langle \boldsymbol{\nabla} \cdot \mathbf{T}\left(C_n\left(t\right)\right) , \mathbf w^1 \right \rangle\right| \nonumber\\ 
     & \qquad\qquad\qquad\qquad + \left|\left(\mathbf f\left(t\right),\mathbf w^1\right)\right| \quad  a.e. \ on \ \left(0,T\right). 
 \end{align} 
 The term $\left \lvert \left(  F\left(C_n\left(t\right)\right) \mathbf{u}_n\left(t\right),\mathbf w^1\right) \right \rvert$ can be estimated as,
\begin{align}
     \left\lvert \left(  F\left(C_n\left(t\right)\right) \mathbf{u}_n\left(t\right),\mathbf w^1\right) \right\rvert &= \left \lvert \left( \mathbf{u}_n\left(t\right), \left(  F\left(C_n\left(t\right)\right) \mathbf w^1\right)\right) \right \rvert \nonumber\\& \le \|\mathbf u_n\left(t\right)\|_{L^2}\|{F\left(C_n\left(t\right)\right)}\|_{L^4}\|\mathbf w^1\|_{L^4} \nonumber\\
     &\leq M \lVert {F \left( C_n \left( t \right) \right)} \rVert_{H^1} \lVert \mathbf w^1 \rVert_{H^1} \quad a.e. \ on \ \left( 0, T \right). \label{eq: estimate of F(C_n)} 
\end{align}
Now applying Cauchy-Schwarz inequality in the rest of the terms of the right-hand side and taking $supremum$ over all $\mathbf w \in \mathscr V_1$ with $\|\mathbf w\|_{H^1} \le1$, we get,
\begin{align}
     \left \lVert \frac{\partial \mathbf{u}_n\left(t\right)}{\partial t} \right\rVert_{\mathscr V_1^*} \le \mu_e\|\boldsymbol{\nabla} \mathbf{u}_n\left(t\right)\|_{L^2} 
     & + M\|{ F\left(C_n\left(t\right)\right)}\|_{H^1} + \| \boldsymbol{\nabla} \cdot \mathbf{T}\left(C_n\left(t\right)\right) \|_{\mathscr V_1^*} \nonumber \\
     & + \| \mathbf f\left(t\right)\|_{L^2}  \quad  a.e. \ on \ \left(0,T\right)\label{eq: L51}. 
\end{align}
Here, $\nabla \cdot \mathbf{T}(C)$ is a sum of expressions of the form $\lambda D_i(D_j C \, D_k C)$. We then have the following estimate
\begin{align}\label{est: korteweg stress}
\| D_i(D_j C \, D_k C) \|_{\mathscr  V_1^*} & \leq \| D_j C \, D_k C \|_{L^2} \leq \| D_j C \|_{L^4} \, \| D_k C \|_{L^4}  \nonumber\\
\leq & M  \lV D_j C\rV^{(4-N)/4}_{L^2}  \lV D_k C\rV^{(4-N)/4}_{L^2} \| D_j C \|_{H^1}^{N/4} \, \| D_k C \|_{H^1}^{N/4} \quad\text{for}\ N = 2,3 .
\end{align}
The remaining terms on the right-hand side are uniformly bounded as a consequence of Lemma \ref{lemma:L2}.  Hence, $ \displaystyle \frac{\partial \mathbf u_n}{\partial t}$ is uniformly bounded in $L^{4/N}\left(0,T;\mathscr V_1^*\right)$ for $N=2,3$.
\end{proof}

\subsubsection{Existence}

Using Lemma \ref{lemma:L1} -- Lemma \ref{lemma:L4} and following the analysis of \citet{kundu2024existence, kostin2003modelling}, we obtain uniformly convergent subsequence of $u_n$, $C_n$ (again using the same notation),
	\begin{align*}
		&\quad C_n  \rightharpoonup C\  in\ L^2\left(0,T;\mathscr V_2\right),   \\ 
		&\quad C_n \stackrel{*}{\rightharpoonup} C \ in \ L^\infty\left(0,T; \mathscr S_2\right),  \\
		&\quad \frac{\partial C_n}{\partial t} \rightharpoonup \frac{\partial C}{\partial t} \ in \ L^2\left(0,T;\mathscr V_2^*\right),   \\ 
		&\quad \mathbf{u}_n \rightharpoonup \mathbf{u} \ in \ L^2\left(0,T;\mathscr V_1\right), \\	
		&\quad \mathbf{u}_n \stackrel{*}{\rightharpoonup} \mathbf{u} \ in \ L^\infty \left(0,T; \mathscr S_1\right),    \\	
		&\quad \frac{\partial \mathbf u_n}{\partial t} \rightharpoonup \frac{\partial \mathbf u}{\partial t} \ in \ L^{4/N}\left(0,T;\mathscr V_1^*\right)\quad\text{for}\ N = 2,3 . \end{align*}
	Applying Aubin-Lions lemma (\emph{\citep[Corollary 4]{simon1986compact}}) we get strongly convergent subsequences of $(\mathbf u_n,C_n)$ in $L^2\left(0,T; \mathscr S_1\right) \times L^2\left(0,T; \mathscr S_2\right)$. Again from Aubin-Lions (\emph{\citep[Corollary 4]{simon1986compact}}) we get compact embeddings of these sequence of Galerkin ($\mathbf{u}_n,C_n$) in $\left(\mathcal{C}\left(0,T; \mathscr S_1\right),\mathcal{C}\left(0,T; \mathscr S_2\right)\right)$.\\
	For all $m \le n$ we have
	\begin{align}
	\left(\frac{\partial C_n\left(t\right)}{\partial t},B\right)
    & + d\left(\boldsymbol{\nabla} C_n\left(t\right), \boldsymbol{\nabla} B\right) + \left(\mathbf{u}_n\left(t\right) \cdot \boldsymbol{\nabla} C_n\left(t\right),B\right) \nonumber \\ 
    & + \kappa \left(C_n\left(t\right)\left(1-C_n\left(t\right)\right),B\right) = 0 \quad \forall B \in \left(\mathscr V_2\right)_m \ a.e. \ on \ \left(0,T\right) \label{eq: C_n pass limit}.
	\end{align}
Now,
\begin{align}
\Big(\left(\mathbf{u}_n\left(t\right) \cdot \boldsymbol{\nabla} C_n\left(t\right)-\mathbf{u}\left(t\right) \cdot \boldsymbol{\nabla} C\left(t\right)\right), B\Big)
& \le \| \boldsymbol{\nabla} C_n\left(t\right) - \boldsymbol{\nabla} C\left(t\right)\|_{L^2}\|\mathbf{u}_n\left(t\right)B\|_{L^2} \nonumber \\ 
& +\| \mathbf{u}_n\left(t\right)-\mathbf{u}\left(t\right)\|_{L^2}\|\boldsymbol{\nabla} C\left(t\right)B\|_{L^2} \nonumber\\  
& \qquad\qquad\qquad \forall \ B \in \left(\mathscr V_2\right)_m .\label{eq: estimate of u.grad C}
\end{align}
By strong convergence of $\mathbf{u}_n$ and $C_n$ in $L^2\left(0,T; \mathscr S_1\right)$ and $L^2\left(0,T; \mathscr S_2\right)$, respectively, we get right hand side of inequality \eqref{eq: estimate of u.grad C} tends to 0. Again using the strong convergence of $C_n$ in $L^2\left(0,T; \mathscr S_2\right)$, we get 
$$C_n\left(1-C_n\right) \to C\left(1-C\right) \ \text{in} \ L^2\left(0,T; L^2(\Omega)\right).$$ 
Noting that $\displaystyle \bigcup_{m \ge 1} \left(\mathscr V_2\right)_m$ is dense in $\mathscr V_2$ and taking $n \to \infty$ in equation \eqref{eq: C_n pass limit} we get,
\begin{align}
	 \left(\frac{\partial C\left(t\right)}{\partial t},B\right)
     & + d\left(\boldsymbol{\nabla} C\left(t\right), \boldsymbol{\nabla} B\right) + \left(\mathbf{u}\left(t\right) \cdot \boldsymbol{\nabla} C\left(t\right),B\right) \nonumber \\ 
     & + \kappa \left(C\left(t\right)\left(1-C\left(t\right)\right),B\right)=0 \quad \forall B \in \mathscr V_2 \ a.e. \ on \ \left(0,T\right). \label{eq: existence of C}
\end{align} 
Again for all $m\le n$ we have 
\begin{align}
	\left \langle  \frac{\partial \mathbf{u}_n\left(t\right)}{\partial t},\mathbf{v} \right \rangle + \mu_e\left(\boldsymbol{\nabla} \mathbf{u}_n\left(t\right), \boldsymbol{\nabla} \mathbf{v}\right)
    & + \left(F\left(C_n\left(t\right)\right) \mathbf{u}_n\left(t\right),\mathbf{v}\right) -\left \langle \boldsymbol{\nabla} \cdot \mathbf{T}\left(C_n\left(t\right)\right) , \mathbf{v} \right \rangle \nonumber \\ 
    & - \left(\mathbf f\left(t\right),\mathbf v\right) = 0 \quad \forall \mathbf{v} \in \left(\mathscr V_1\right)_m \ a.e. \ on \ \left(0,T\right). \label{eq: u_n pass limit}
\end{align}
Now since $\mathbf{u}_n \rightharpoonup \mathbf{u}$ in $L^2\left(0,T;\mathscr V_1\right)$, $ \displaystyle \frac{\partial \mathbf u_n}{\partial t} \rightharpoonup \frac{\partial \mathbf u}{\partial t} $ in $ L^{4/N}\left(0,T;\mathscr V_1^*\right) $ we can pass the limit in the terms \eqref{eq: u_n pass limit}$_1$ and \eqref{eq: u_n pass limit}$_2$. \\Using the strong convergence of $C_n$,\ $\mathbf u_n$ and assumption \eqref{assump:A1} we obtain,
\begin{align}
    \left(F\left(C_n\left(t\right)\right) \mathbf u_n\left(t\right),\mathbf v\right)\to\left(F\left(C\left(t\right)\right)\mathbf u\left(t\right),\mathbf v\right) \quad \forall v \in \left(\mathscr V_1\right)_m, \ \forall m \le n \ a.e.  \ on \ \left(0,T\right) \label{eq: Fu}. 
    \end{align}

For the term corresponding to $\nabla \cdot \mathbf{T}(C)$ we notice from equation \eqref{est: korteweg stress}, that $D_j C_n D_k C_n$ is bounded in $L^2(0, T; L^2(\Omega))$, so that some subsequence
converges weakly in $L^2(0, T; L^2(\Omega))$ to some function $\Xi_{jk}$, where $\Xi_{jk} \in L^2(0, T; L^2(\Omega))$. Using the strong convergence of $C_n$ in
the space $L^2(0, T; \mathscr S_2)$, we immediately infer that $\Xi_{jk} = D_j C D_k C$.

Now using the fact that $\cup_{m\ge1}\left(\mathscr V_1\right)_m$ is dense in $\mathscr V_1$ and passing the limit in equation \eqref{eq: u_n pass limit} we get,
\begin{align}
	\left \langle  \frac{\partial \mathbf{u}\left(t\right)}{\partial t},\mathbf{v} \right \rangle + \mu_e\left(\boldsymbol{\nabla} \mathbf{u}\left(t\right), \boldsymbol{\nabla} \mathbf{v}\right)
    & + \left(F\left(C\left(t\right)\right) \mathbf{u}\left(t\right),\mathbf{v}\right) -\left \langle \boldsymbol{\nabla} \cdot \mathbf{T}\left(C\left(t\right)\right) , \mathbf{v} \right \rangle \nonumber \\ 
    & - \left(\mathbf f\left(t\right),\mathbf v\right) = 0 \quad \forall \mathbf{v} \in \mathscr V_1\ a.e. \ on \ \left(0,T\right) \label{eq: existence u}.
\end{align}
Again, to demonstrate the existence of the pressure, we apply de Rham's theorem (\emph{\citep[Theorem IV.2.4]{boyer2012mathematical}}) and proceed analogously to the method outlined in \citep{kundu2024existence}.

\subsection{Proof of Theorem \ref{th: bddness & blow_up}}

\subsubsection{Global existence of solution}
We first show that if $C_0\geq0$ then $C\geq 0.$
Take $C^-=\max\{0,-C\}$ as test function in equation \eqref{var C} and obtain the following equation:
\begin{align}
    \frac{1}{2}\frac{d}{dt}\|C^-(t)\|_{L^2}^2+d\|\boldsymbol\nabla C^-(t)\|_{L^2}^2 =- \kappa \|C^-(t)\|_{L^2}^2-\kappa\left(C^2(t),C^-(t)\right)
\end{align}
The right-hand side of the above equation is non-positive. This gives us,
\begin{align}
     \frac{1}{2}\frac{d}{dt}\|C^-(t)\|_{L^2}^2\le 0. \label{eq: C^- 2nd}
\end{align}
Integrating inequality \eqref{eq: C^- 2nd} from $0$ to $\tau$ for any $\tau \in (0,T)$ we get,
\begin{align}
    \|C^-(\tau)\|_{L^2}^2\le \| C^-(0) \|_{L^2}^2 = 0. 
\end{align}
This implies $C\ge0$ almost everywhere on $\Omega\times(0,T)$.  

We now show that if $0\leq C_0\leq \gamma_1 \leq1$ then $C\leq \gamma_1$. For that we take $\phi = \max\{C-\gamma_1,0\}$ as a test function in equation \eqref{var C}, and obtain the following equation:
\begin{align}
    \frac{1}{2}\frac{d}{dt}\|\phi(t)\|_{L^2}^2+d\|\boldsymbol\nabla \phi (t)\|_{L^2}^2 &= \kappa \left ( C (t) (C(t) -1), \phi(t)\right) \nonumber \\
    & = \kappa\|\phi(t)\|_{L^3}^3+\kappa \gamma_1\|\phi(t)\|_{L^2}^2 +\kappa (\gamma_1 - 1)\|\phi(t)\|_{L^2}^2 + \kappa(\gamma_1^2-\gamma_1)\int_\Omega \phi(t)d\Omega
\end{align}
The last two terms of the above inequality are non-positive, which gives us,
\begin{align}
    \frac{1}{2}\frac{d}{dt}\|\phi(t)\|_{L^2}^2+d\|\boldsymbol\nabla \phi (t)\|_{L^2}^2 &\leq \kappa\|\phi(t)\|_{L^3}^3+\kappa \gamma_1\|\phi(t)\|_{L^2}^2 \label{eq: C<1 0th}\\
    &\leq  M \|\phi(t)\|^3_{L^{6-N}} + \kappa \gamma_1 \|\phi(t)\|^2_{L^2}\quad \text{for} \ N=2,3.\label{eq: C<1 1st}
\end{align}
Applying Gagliardo-Nirenberg inequality (Theorem \ref{Th: gagliardo}) to the first term of right hand side, we get,
\begin{align}
    \frac{1}{2}\frac{d}{dt}\|\phi(t)\|_{L^2}^2+d\|\boldsymbol\nabla \phi\|_{L^2}^2
    &\leq M \|\phi\|^{3/2}_{L^2}\|\phi\|^{3/2}_{H^1} + \kappa \gamma_1 \|\phi(t)\|^2_{L^2} .\label{eq: C<1 1st}
\end{align}
Using Young's inequality and  using the fact that $C\in L^\infty(0,T;H^1)$, we obtain,
\begin{align}
    \frac{1}{2}\frac{d}{dt}\|\phi(t)\|_{L^2}^2\leq A(\epsilon)\|\phi\|^{2}_{L^2}.\label{eq: C<1 2nd}
\end{align}
Applying Gr\"onwall's inequality, we get
\begin{align}
\|\phi(t)\|_{L^2}^2 \leq 0.  
\end{align}
This gives $C\leq \gamma_1$ almost everywhere in $\Omega \times (0,T)$. 

Now, we prove that the solution exists for any time $T>0.$ The existence of a solution up to a finite time is coming due to Lemma \ref{lemma:L1}, where the time interval is getting restricted due to inequality \eqref{uniform boundedness of C_n 3rd}. 
We now show that the solution remains uniformly bounded for any $T>0$. For that we take $C(t)$ as a test function in equation \eqref{var C}:
	\begin{align}
    \frac{1}{2}\frac{d}{dt}\|C\left(t\right)\|_{L^2}^2&=- \kappa \left(C(t)\left(1-C(t)\right),C(t)\right) \le -\kappa\left(1 - \gamma_1\right)\|C(t)\|^2_{L^2} \quad a.e.\  on \ \left(0,T\right). \label{uniform boundedness C for all time 1st }
	\end{align}
    This gives
    \begin{align}
        \|C(t)\|^2_{L^2}\leq \exp{\left(-\kappa\left(1 - \gamma_1\right)t\right)}\|C_0\|^2_{L^2}.  \label{uniform boundedness C for all time 2nd}
    \end{align}
This gives the uniform boundedness of $C$ in $L^\infty(0,T;L^2(\Omega)$ for any $T>0.$ Also, inequality \eqref{uniform boundedness C for all time 2nd} gives that the solute concentration converges exponentially to $0$ as $t\to \infty$ if $C_0 \leq \gamma_1 < 1.$

\subsubsection{Finite time blow up}
We now show that if $C_0\geq \gamma_2 >1$ then $C\geq \gamma_2$. For that we take $\psi = \min\{C-\gamma_2,0\}$ as a test function in equation \eqref{var C}, and obtain the following equation:
\begin{align}
    \frac{1}{2}\frac{d}{dt}\|\psi(t)\|_{L^2}^2+d\|\boldsymbol\nabla \psi (t)\|_{L^2}^2 &= \kappa \left ( C (t) (C(t) -1), \psi(t)\right) \nonumber \\
    & = \|\psi(t)\|_{L^3}^3 + (2\gamma_2 - 1)\|\psi(t)\|_{L^2}^2 + (\gamma_2^2-\gamma_2)\int_\Omega \psi(t)d\Omega
\end{align}
The last part of the above inequality is non-positive due to the construction of the function $\psi$, which gives us,
\begin{align}
    \frac{1}{2}\frac{d}{dt}\|\psi(t)\|_{L^2}^2+d\|\boldsymbol\nabla \psi (t)\|_{L^2}^2 \leq \|\psi(t)\|_{L^3}^3 + (2\gamma_2 - 1)\|\psi(t)\|_{L^2}^2.
\end{align}
We have now arrived at a similar expression as in equation \eqref{eq: C<1 0th}. So, proceeding similarly, we obtain
\begin{align}
    \|\psi (t)\|_{L^2} = 0.
\end{align}
Which gives us $C\geq \gamma_2$ almost everywhere in $\Omega\times(0,T)$. 
Next, we prove that the solute concentration blows up in a finite time. For that we define $\mathscr C(t)= \int_\Omega C(t) d\Omega$. 

Taking $B = 1 $ as a test function in equation \eqref{var C}, we get
\begin{align}
 \frac{d \mathscr C(t)}{d t} &= - \kappa \int_\Omega C(t) (1-C(t)) d\Omega \nonumber
 \\
 &\geq \frac{\kappa}{|\Omega|}\left(\mathscr C^2(t) - |\Omega| \mathscr C(t)\right).
\end{align}
Again upon integration from $0$ to $t$, we get
\begin{align}
    \mathscr C(t) \geq \frac{|\Omega|\mathscr C(0)}{\mathscr C(0)-e^{\kappa t}\left(\mathscr C(0) -|\Omega|\right)}
\end{align}
The right-hand side of the above inequality tends to infinity if $\displaystyle t\to T_*^-$, where 
\begin{align}
T_* =  \left(\frac{1}{\kappa}\ln{\frac{\mathscr C(0)}{\mathscr C(0) - |\Omega|}}\right).
\end{align}Hence we have
\begin{align}
    \lim_{t\to T_*^-} \|C(t)\|_{L^1} = +\infty. \label{eq: blowup time}
\end{align}

\subsection{Proof of Theorem \ref{th: Uniqueness}}
Let $\left(\mathbf{u}_1,C_1\right)$ and  $\left(\mathbf{u}_2,C_2\right)$ be two solutions. We define, $\mathbf{u}=\mathbf{u}_1-\mathbf{u}_2$, $C=C_1-C_2$. Then we have the following system of equations,
\begin{align}
    \left(\frac{\partial C\left(t\right)}{\partial t},B\right)
    & + d\left(\boldsymbol{\nabla} C\left(t\right),\boldsymbol{\nabla} B\right)+\left(\mathbf{u}_1 \left(t\right)\cdot\boldsymbol{\nabla} C_1\left(t\right)-\mathbf{u}_2 \left(t\right)\cdot \boldsymbol{\nabla} C_2\left(t\right),B\right) + \kappa \left(C\left(t\right), B\right) \nonumber \\ 
    & -\kappa \left(\left(C_1\left(t\right) + C_2\left(t\right)\right)C\left(t\right), B\right) = 0 \quad  \forall B \in \mathscr V_2\ a.e.\ on\ \left(0,T\right),\label{eq: uC1} \\
   \left \langle\frac{\partial\mathbf{u}\left(t\right)}{\partial t},\mathbf{v}\right \rangle 
   & + \mu _e\left(\boldsymbol{\nabla}\mathbf{u}\left(t\right),\boldsymbol{\nabla} \mathbf v\right) + \left(F\left(C_1\left(t\right)\right) \mathbf u_1\left(t\right)-F\left(C_1\left(t\right)\right) \mathbf u_2\left(t\right),\mathbf v\right) \nonumber \\ 
   & - \left \langle\boldsymbol{\nabla}\left(T\left(C_1\left(t\right)\right)-T\left(C_2\left(t\right)\right)\right),\mathbf v\right \rangle = 0 \quad \forall \mathbf{v} \in \mathscr V_1 \ a.e.\ on\ \left(0,T\right). \label{eq: uu1}
\end{align}
Putting $B=C\left(t\right)$ in \eqref{eq: uC1} we get,
\begin{align}
    \frac{1}{2} \frac{d}{dt}\|C\left(t\right)\|^2_{L^2}+d\|\boldsymbol{\nabla} C\left(t\right)\|^2_{L^2}+&\left(\mathbf u_1\left(t\right)\cdot\boldsymbol{\nabla} C_1\left(t\right)-\mathbf u_2\left(t\right)\cdot\boldsymbol{\nabla} C_2\left(t\right),C\left(t\right)\right)+\kappa \|C\left(t\right)\|^2_{L^2}\nonumber\\  =&\kappa \left(\left(C_1\left(t\right)+C_2\left(t\right)\right)C\left(t\right),C\left(t\right)\right) \quad a.e.\ on\ \left(0,T\right). \label{eq: uC2}
\end{align}
Now,
\begin{align}
\left(\mathbf u_1\left(t\right)\cdot\boldsymbol{\nabla} C_1\left(t\right)-\mathbf u_2\left(t\right)\cdot\boldsymbol{\nabla} C_2\left(t\right),C\left(t\right)\right)=\left(\mathbf u_1\left(t\right)\cdot\boldsymbol{\nabla} C\left(t\right),C\left(t\right)\right)+\left(\mathbf u\left(t\right) \cdot \boldsymbol{\nabla} C_2\left(t\right),C\left(t\right)\right).
\end{align}
For the term $\left(\mathbf u_1\left(t\right)\cdot\boldsymbol{\nabla} C\left(t\right),C\left(t\right)\right)$ we have the following estimate,
\begin{align}
    |\left(\mathbf u_1\left(t\right)\cdot\boldsymbol{\nabla} C\left(t\right),C\left(t\right)\right)|\le&\|\mathbf u_1\left(t\right) \cdot \boldsymbol{\nabla} C\left(t\right)\|_{L^2}\|C\left(t\right)\|_{L^2}\nonumber\\
    \le & \|\boldsymbol{\nabla} C\left(t\right)\|_{L^4}\|\mathbf u_1\left(t\right)\|_{L^4}\|C\left(t\right)\|_{L^2} \nonumber \\
    \le & M_1\| C\left(t\right)\|^{\frac{1}{2}}_{L^2}\|\Delta C\left(t\right)\|^{\frac{1}{2}}_{L^2}\|\mathbf u_1\left(t\right)\|^{\frac{1}{2}}_{L^2}\|\boldsymbol{\nabla}\mathbf u_1\left(t\right)\|^{\frac{1}{2}}_{L^2}\|C\left(t\right)\|_{L^2} \nonumber\\
    \le & M\|\Delta C\left(t\right)\|^{\frac{1}{2}}_{L^2}\|\boldsymbol{\nabla}\mathbf u_1\left(t\right)\|^{\frac{1}{2}}_{L^2} \|\boldsymbol{\nabla} C\left(t\right)\|^{\frac{1}{2}}_{L^2}\quad a.e.\ on\ \left(0,T\right) \nonumber \\ 
    & \qquad \qquad \qquad \qquad \text{(using Lemmas \ref{lemma:L1} and  \ref{lemma:L2})} \\
    \le & \epsilon \|\Delta C\left(t\right)\|^2_{L^2}+A\left(\epsilon\right) \|\boldsymbol{\nabla}\mathbf u_1\left(t\right)\|^2_{L^2} \|\boldsymbol{\nabla} C\left(t\right)\|^2_{L^2}\quad a.e.\ on\ \left(0,T\right).
\end{align}
Again, for the term $\left(\mathbf u\left(t\right) \cdot \boldsymbol{\nabla} C_2\left(t\right),C\left(t\right)\right)$ we can show,
\begin{align}
   \left\lvert  \left(\mathbf u\left(t\right) \cdot \boldsymbol{\nabla} C_2\left(t\right),C\left(t\right)\right) \right\rvert\le & \epsilon \|\Delta C\left(t\right)\|^2_{L^2}+A\left(\epsilon\right) \|\boldsymbol{\nabla}\mathbf u\left(t\right)\|^2_{L^2} \|\mathbf u\left(t\right)\|^2_{L^2}\quad a.e.\ on\ \left(0,T\right).
\end{align}
and for the term $\kappa \left(\left(C_1\left(t\right)+C_2\left(t\right)\right)C\left(t\right),C\left(t\right)\right)$ we can show,
\begin{align}
   | \kappa \left(\left(C_1\left(t\right)+C_2\left(t\right)\right)C\left(t\right),C\left(t\right)\right)|\le M \|C\left(t\right)\|^2_{L^2} \quad a.e.\ on\ \left(0,T\right) .
\end{align}
Thus using these inequalities in equation \eqref{eq: uC2} we get,
\begin{align}
     \frac{1}{2} \frac{d}{dt}\|C\left(t\right)\|^2_{L^2}+d\|\boldsymbol{\nabla} C\left(t\right)\|^2_{L^2}+\kappa
     \| C\left(t\right)\|^2_{L^2}\le 2 \epsilon \|\Delta C\left(t\right)\|^2_{L^2}+A\left(\epsilon\right) \|\boldsymbol{\nabla}\mathbf u_1\left(t\right)\|^2_{L^2} \|\boldsymbol{\nabla} C\left(t\right)\|^2_{L^2} \nonumber\\+ A\left(\epsilon\right) \|\boldsymbol{\nabla}\mathbf u\left(t\right)\|^2_{L^2} \|\mathbf u\left(t\right)\|^2_{L^2} +M \|C\left(t\right)\|^2_{L^2}  \quad a.e.\ on\ \left(0,T\right). \label{eq: uC3}
\end{align}
Again, taking $B=-\Delta C\left(t\right)$ in equation \eqref{eq: uC1} and using integration by parts on the first and second term we get,
\begin{align}
    \frac{ 1}{2} \frac{d}{dt}\|\boldsymbol{\nabla} C\left(t\right)\|^2_{L^2}+d    \| \Delta C\left(t\right)\|^2_{L^2} 
    & + \kappa \|\boldsymbol{\nabla} C\left(t\right)\|^2_{L^2} \nonumber \\ 
    & = \left(\mathbf u_1\left(t\right)\cdot\boldsymbol{\nabla} C_1\left(t\right)-\mathbf u_2\left(t\right)\cdot\boldsymbol{\nabla} C_2\left(t\right),\Delta C\left(t\right)\right) \nonumber \\  
    & \quad - \kappa \left(\left(C_1\left(t\right)+C_2\left(t\right)\right)C\left(t\right),\Delta C\left(t\right)\right) \quad a.e.\ on\ \left(0,T\right)\nonumber \\
    & \le \left(\mathbf u_1\left(t\right)\cdot\boldsymbol{\nabla} C\left(t\right),\Delta C\left(t\right)\right) + \left(\mathbf u\left(t\right) \cdot \boldsymbol{\nabla} C_2\left(t\right) ,\Delta C\left(t\right)\right) \nonumber \\ 
    & \quad - \kappa \left(\left(C_1\left(t\right)+C_2\left(t\right)\right)C\left(t\right),\Delta C\left(t\right)\right) \quad a.e.\ on\ \left(0,T\right) \label{eq: unabC1}.
\end{align}
For the term $\left(\mathbf u_1\left(t\right)\cdot\boldsymbol{\nabla} C\left(t\right),\Delta C\left(t\right)\right)$ we can show the following estimate,
\begin{align}
   | \left( \mathbf u_1\left(t\right)\cdot\boldsymbol{\nabla} C\left(t\right),\Delta C\left(t\right) \right)|\le & \|\mathbf u_1\left(t\right)\|_{L^4}\|\boldsymbol{\nabla} C\left(t\right)\|_{L^4}\|\Delta C\left(t\right)\|_{L^2} \nonumber \\
    \le& M \| \Delta C\left(t\right)\|^\frac{3}{2}_{L^2} \|\boldsymbol{\nabla} C\left(t\right)\|^\frac{1}{2}_{L^2}\|\mathbf u_1\|^\frac{1}{2}_{L^2}\|\boldsymbol{\nabla} \mathbf u_1\|^\frac{1}{2}_{L^2} \quad a.e.\ on\ \left(0,T\right)  \nonumber \\
    \le& \epsilon \| \Delta C\left(t\right)\|^{2}_{L^2} + A\left(\epsilon\right) \|\boldsymbol{\nabla} C\left(t\right)\|^{2}_{L^2} \|\boldsymbol{\nabla} \mathbf u_1\|^{2}_{L^2} \quad a.e.\ on\ \left(0,T\right), \label{eq: u.nab C, delta C}
\end{align}
and for the term $\kappa \left(\left(C_1\left(t\right)+C_2\left(t\right)\right)C\left(t\right),\Delta C\left(t\right)\right)$ we have,
\begin{align}
    \left | \kappa \left(\left(C_1\left(t\right)+C_2\left(t\right)\right)C\left(t\right),\Delta C\left(t\right)\right) \right | 
    & \le \epsilon \|\boldsymbol{\nabla} C\left(t\right)\|_{L^2}^2 + 
    \epsilon \|\Delta C\left(t\right)\|_{L^2}^2 \nonumber \\ 
    & \quad + A\left(\epsilon\right) \|C\left(t\right)\|_{L^2}^2 \quad a.e. \ on \ \left(0,T\right).\label{eq: estimate (c_1+c_2)C. lap C}
\end{align}
Substituting inequality \eqref{eq: u.nab C, delta C} and \eqref{eq: estimate (c_1+c_2)C. lap C} into inequality \eqref{eq: unabC1}, we derive,
\begin{align}
    \frac{  1}{2}\frac{d}{dt}\|\boldsymbol{\nabla} C\left(t\right)\|^2_{L^2}
    & +d \| \Delta C\left(t\right)\|^2_{L^2}+\kappa \|\boldsymbol{\nabla} C\left(t\right)\|_{L^2}^2 \nonumber \\
    & \le 2 \epsilon \| \Delta C\left(t\right)\|^{2}_{L^2} + \epsilon \|\boldsymbol{\nabla} C\left(t\right)\|_{L^2}^2 + A\left(\epsilon\right) \|\boldsymbol{\nabla} C\left(t\right)\|^{2}_{L^2} \|\boldsymbol{\nabla} \mathbf u_1(t)\|^{2}_{L^2} \nonumber \\
    & \qquad + \left(\mathbf u\left(t\right) \cdot \boldsymbol{\nabla} C_2\left(t\right) ,\Delta C\left(t\right)\right) + A\left(\epsilon\right) \|C\left(t\right)\|_{L^2}^2
    \quad a.e.\ on\ \left(0,T\right) \label{eq: unabC2}.
\end{align}
By setting $\mathbf{v} = \mathbf{u}\left(t\right)$ in equation \eqref{eq: uu1}, we obtain,
\begin{align}
   \frac{ 1}{2}\frac{d}{dt}\| \mathbf  u\left(t\right)\|^2_{L^2}+\mu _e\|\boldsymbol{\nabla}\mathbf{u}\left(t\right)\|^2_{L^2}+\left(F\left(C_1\left(t\right)\right) \mathbf u_1\left(t\right)-F\left(C_2\left(t\right)\right) \mathbf u_2\left(t\right),\mathbf u\left(t\right)\right)\nonumber\\- \left \langle\boldsymbol{\nabla}\cdot\left(T\left(C_1\left(t\right)\right)-T\left(C_2\left(t\right)\right)\right),\mathbf   u\left(t\right)\right \rangle=0  \quad a.e.\ on\ \left(0,T\right).
\end{align}
By applying equation \eqref{eq: Korteweg expression} and rearranging the terms, we get,
\begin{align}
   \frac{ 1}{2}\frac{d}{dt}\| \mathbf  u\left(t\right)\|^2_{L^2} 
   & + \mu _e \|\boldsymbol{\nabla}\mathbf{u}\left(t\right)\|^2_{L^2}+\left(F\left(C_1\left(t\right)\right)\mathbf u\left(t\right),\mathbf u\left(t\right)\right) \nonumber \\ 
   & + \left(F\left(C_1\left(t\right)\right)-F\left(C_2\left(t\right)\right)\mathbf u_2\left(t\right),
   \mathbf u\left(t\right)\right) + \hat \delta \left(\boldsymbol{\nabla} C\left(t\right) \Delta C_1\left(t\right),\mathbf u\left(t\right)\right) \nonumber \\
   & + \hat \delta \left(\boldsymbol{\nabla} C_2\left(t\right)\Delta C\left(t\right),\mathbf u\left(t\right)\right)=0  \quad a.e.\ on\ \left(0,T\right) \label{eq: uu3}.
\end{align}
For the term \eqref{eq: uu3}$_3$, we obtain the following estimate,
\begin{align}
   \left|\left(F\left(C_1\left(t\right)\right)\mathbf u\left(t\right),\mathbf u\left(t\right)\right)\right| 
   \le & \|F\left(C_1\left(t\right)\right)\|_{L^2}\|\mathbf u\left(t\right)\|^2_{L^4} \nonumber \\ 
   \le & M\|F\left(C_1\left(t\right)\right)\|_{L^2}\|\boldsymbol{\nabla}\mathbf u\left(t\right)\|_{L^2}\|\mathbf u\left(t\right)\|_{L^2} \nonumber \\ 
    \le& \epsilon \|\boldsymbol{\nabla}\mathbf u\left(t\right)\|^2_{L^2}+A\left(\epsilon\right)\|F\left(C_1\left(t\right)\right)\|^2_{L^2}\|\mathbf u\left(t\right)\|_{L^2}^2.
\end{align}
For the term \eqref{eq: uu3}$_4$, we obtain,
\begin{align}
   |(F\left(C_1\left(t\right)\right) & - F\left(C_2\left(t\right)\right),\mathbf u\left(t\right)\cdot \mathbf u_2  \left(t\right))| 
   \le \|F\left(C_1\left(t\right)\right)-F\left(C_2\left(t\right)\right)\|_{L^2}\|\mathbf u_2\left(t\right)\|_{L^4}\|\mathbf u\left(t\right)\|_{L^4} \nonumber \\
   & \le \|F\left(C_1\left(t\right)\right) -F\left(C_2\left(t\right)\right)\|_{L^2}\|\mathbf u_2\left(t\right)\|^\frac{1}{2}_{L^2}\|\boldsymbol{\nabla}\mathbf u_2\left(t\right)\|^\frac{1}{2}_{L^2}\|\mathbf u\left(t\right)\|^\frac{1}{2}_{L^2}\|\boldsymbol{\nabla}\mathbf u\left(t\right)\|^\frac{1}{2}_{L^2} \nonumber \\
   & \le \|F\left(C_1\left(t\right)\right) -F\left(C_2\left(t\right)\right)\|_{L^2}\|\boldsymbol{\nabla}\mathbf u_2\left(t\right)\|^\frac{1}{2}_{L^2}\|\mathbf u\left(t\right)\|^\frac{1}{2}_{L^2}\|\boldsymbol{\nabla}\mathbf u\left(t\right)\|^\frac{1}{2}_{L^2} \quad a.e. \ on\ \left(0,T\right) \nonumber \\
   & \le \|F\left(C_1\left(t\right)\right) -F\left(C_2\left(t\right)\right)\|^2_{L^2} + \|\boldsymbol{\nabla}\mathbf u_2\left(t\right)\|_{L^2}\|\mathbf u\left(t\right)\|_{L^2}\|\boldsymbol{\nabla}\mathbf u\left(t\right)\|_{L^2} \quad a.e. \ on\ \left(0,T\right) \nonumber \\
   & \le \|F\left(C_1\left(t\right)\right) -F\left(C_2\left(t\right)\right)\|^2_{L^2} + A\left(\epsilon\right)\|\boldsymbol{\nabla}\mathbf u_2\left(t\right)\|^2_{L^2}\|\mathbf u\left(t\right)\|^2_{L^2} + \epsilon\|\boldsymbol{\nabla}\mathbf u\left(t\right)\|^2_{L^2} \quad a.e. \ on\ \left(0,T\right).
\end{align}
For the term \eqref{eq: uu3}$_5$, we have,
\begin{align}
    \left|\left(\Delta C_1\left(t\right) \boldsymbol{\nabla} C\left(t\right) ,\mathbf{u}\left(t\right)\right)\right|
\leq& \|\Delta C_1\left(t\right)\|_{L^2} \|\boldsymbol{\nabla} C\left(t\right) \cdot \mathbf{u}\left(t\right)\|_{L^2}
\nonumber \\ \leq& M \|\Delta C_1\left(t\right)\|_{L^2} \|\boldsymbol{\nabla} C\left(t\right)\|_{L^2}^{1/2} \|\Delta C\left(t\right)\|_{L^2}^{1/2} \|\mathbf{u}\left(t\right)\|_{L^2}^{1/2} \|\boldsymbol{\nabla} \mathbf{u}\left(t\right)\|_{L^2}^{1/2}  \nonumber \\
\leq& 2\epsilon \left(\|\Delta C\left(t\right)\|_{L^2} \|\boldsymbol{\nabla} \mathbf{u}\left(t\right)\|_{L^2}\right) + A\left(\epsilon\right) \|\Delta C_1\left(t\right)\|_{L^2}^2 \|\boldsymbol{\nabla} C\left(t\right)\|_{L^2} \| \mathbf{u}\left(t\right)\|_{L^2} \nonumber \\
\leq& \epsilon \left( \|\boldsymbol{\nabla} \mathbf{u}\left(t\right)\|_{L^2}^2 + \|\Delta C\left(t\right)\|_{L^2}^2 \right) \nonumber \\&+ A\left(\epsilon\right) \|\Delta C_1\left(t\right)\|_{L^2}^2 \left( \|\boldsymbol{\nabla} C\left(t\right)\|_{L^2}^2  + \|\mathbf{u}\left(t\right)\|_{L^2}^2 \right) \quad a.e.\ on \ \left(0,T\right).
\end{align}
Using these inequalities, equation \eqref{eq: uu3} takes the form,
\begin{align}
    \frac{ 1}{2}\frac{d}{dt}\| \mathbf  u\left(t\right)\|_{L^2}+\mu _e\|\boldsymbol{\nabla}\mathbf{u}\left(t\right)\|^2_{L^2} 
    & \le \epsilon \|\boldsymbol{\nabla}\mathbf u\left(t\right)\|^2_{L^2}+A\left(\epsilon\right)\|F\left(C_1\left(t\right)\right)\|^2_{L^2}\|\mathbf u\left(t\right)\|_{L^2}^2 \nonumber \\ 
    & + \|F\left(C_1\left(t\right)\right)-F\left(C_2\left(t\right)\right)\|^2_{L^2} + A\left(\epsilon\right)\|\boldsymbol{\nabla}\mathbf u_2\left(t\right)\|^2_{L^2}\|\mathbf u\left(t\right)\|^2_{L^2} \nonumber \\ 
    & + \epsilon\|\boldsymbol{\nabla}\mathbf u\left(t\right)\|^2_{L^2}  + \epsilon \left( \|\boldsymbol{\nabla} \mathbf{u}\left(t\right)\|_{L^2}^2 + \|\Delta C\left(t\right)\|_{L^2}^2 \right) \nonumber \\ 
    & + A\left(\epsilon\right) \|\Delta C_1\left(t\right)\|_{L^2}^2 \left( \|\boldsymbol{\nabla} C\left(t\right)\|_{L^2}^2 + \|\mathbf{u}\left(t\right)\|_{L^2}^2 \right)  \quad a.e.\ on\ \left(0,T\right).
\end{align}
This implies,
\begin{align}
      \frac{ 1}{2}\frac{d}{dt}\| \mathbf  u\left(t\right)\|_{L^2}- &\epsilon \|\Delta C\left(t\right)\|_{L^2}^2+\left(\mu _e-3\epsilon\right) \|\boldsymbol{\nabla}\mathbf{u}\left(t\right)\|^2_{L^2}\le A\left(\epsilon\right)\|F\left(C_1\left(t\right)\right)\|^2_{L^2}\|\mathbf u\left(t\right)\|_{L^2}^2\nonumber\\& + 
    \|F\left(C_1\left(t\right)\right)-F\left(C_2\left(t\right)\right)\|^2_{L^2}+A\left(\epsilon\right)\|\boldsymbol{\nabla}\mathbf u_2\left(t\right)\|^2_{L^2}\|\mathbf u\left(t\right)\|^2_{L^2}\nonumber\\&+A\left(\epsilon\right) \|\Delta C_1\left(t\right)\|_{L^2}^2 \left( \|\boldsymbol{\nabla} C\left(t\right)\|_{L^2}^2 + \|\mathbf{u}\left(t\right)\|_{L^2}^2 \right)  \quad a.e.\ on\ \left(0,T\right).\label{eq: uu4}
\end{align}
Multiplying inequality \eqref{eq: unabC2} by $\hat \delta$ and subsequently adding it to inequalities \eqref{eq: uC3} and \eqref{eq: uu4}, we obtain the following:
\begin{align}
\frac{1}{2} \frac{d}{dt} \Big(\|C\left(t\right)\|^2_{L^2} 
       & + \hat \delta\|\boldsymbol{\nabla} C\left(t\right)\|^2_{L^2} + \|\mathbf u\left(t\right)\|_{L^2}^2 \Big) + 
     \left(d \hat \delta -3 \epsilon - 2\hat \delta\epsilon\right)\| \Delta C\left(t\right)\|^2_{L^2} \nonumber \\ 
     & + \left(d+\hat \delta \kappa-\hat \delta \epsilon\right)\|\boldsymbol{\nabla} C\left(t\right)\|^2_{L^2} 
     +\kappa \|C\left(t\right)\|^2_{L^2} + \left(\mu _e-3\epsilon\right) \|\boldsymbol{\nabla}\mathbf{u}\left(t\right)\|^2_{L^2} \nonumber \\ 
     & \le \left(M +\hat \delta A\left(\epsilon\right)\right)\|C\left(t\right)\|^2_{L^2}+A\left(\epsilon\right) \|\boldsymbol{\nabla}\mathbf u_1\left(t\right)\|^2_{L^2} \|\boldsymbol{\nabla} C\left(t\right)\|^2_{L^2} \nonumber\\ 
     & \quad + A\left(\epsilon\right) \|\boldsymbol{\nabla}\mathbf u\left(t\right)\|^2_{L^2} \|\mathbf u\left(t\right)\|^2_{L^2}  +\hat \delta  A\left(\epsilon\right) \|\boldsymbol{\nabla} C\left(t\right)\|^{2}_{L^2} \|\boldsymbol{\nabla} \mathbf u_1\|^{2}_{L^2} \nonumber \\ 
     & + A\left(\epsilon\right)\|F\left(C_1\left(t\right)\right)\|^2_{L^2}\|\mathbf u\left(t\right)\|_{L^2}^2 + \|F\left(C_1\left(t\right)\right)-F\left(C_2\left(t\right)\right)\|^2_{L^2} \nonumber \\ 
     & + A\left(\epsilon\right)\|\boldsymbol{\nabla}\mathbf u_2\left(t\right)\|^2_{L^2}\|\mathbf u\left(t\right)\|^2_{L^2} + A\left(\epsilon\right) \|\Delta C_1\left(t\right)\|_{L^2}^2 \left( \|\boldsymbol{\nabla} C\left(t\right)\|_{L^2}^2 + \|\mathbf{u}\left(t\right)\|_{L^2}^2 \right) \nonumber \\ 
     & \qquad\qquad\qquad\qquad\qquad\qquad\qquad\qquad\qquad\qquad\qquad  a.e.\ on\ \left(0,T\right). \label{eq: uuc1}
\end{align}
We choose $\epsilon$ such that $\left(d \hat \delta -3 \epsilon -2\hat \delta\epsilon\right)$, $\left(d \hat \delta -3 \epsilon -2\hat \delta\epsilon\right)$ and $\left(\mu _e-3\epsilon\right)$ are non negative.
Then from inequality \eqref{eq: uuc1} we get,
\begin{align}
\frac{1}{2} \frac{d}{dt} & \Big(\|C\left(t\right)\|^2_{L^2} + \hat \delta\|\boldsymbol{\nabla} C\left(t\right)\|^2_{L^2} +\|\mathbf u\left(t\right)\|_{L^2}^2 \Big) \nonumber \\ 
& \le \left(M+\hat\delta A\left(\epsilon\right)\right)\|C\left(t\right)\|^2_{L^2}+A\left(\epsilon\right) \|\boldsymbol{\nabla}\mathbf u_1\left(t\right)\|^2_{L^2} \|\boldsymbol{\nabla} C\left(t\right)\|^2_{L^2} \nonumber \\
& + A\left(\epsilon\right) \|\boldsymbol{\nabla}\mathbf u\left(t\right)\|^2_{L^2} \|\mathbf u\left(t\right)\|^2_{L^2} + \hat \delta A\left(\epsilon\right) \|\boldsymbol{\nabla} C\left(t\right)\|^{2}_{L^2} \|\boldsymbol{\nabla} \mathbf u_1\|^{2}_{L^2} \nonumber \\ 
& + A\left(\epsilon\right)\|F\left(C_1\left(t\right)\right)\|^2_{L^2}\|\mathbf u\left(t\right)\|_{L^2}^2 + \|F\left(C_1\left(t\right)\right)-F\left(C_2\left(t\right)\right)\|^2_{L^2} \nonumber \\ 
& + A\left(\epsilon\right)\|\boldsymbol{\nabla}\mathbf u_2\left(t\right)\|^2_{L^2}\|\mathbf u\left(t\right)\|^2_{L^2} + A\left(\epsilon\right) \|\Delta C_1\left(t\right)\|_{L^2}^2 \left( \|\boldsymbol{\nabla} C\left(t\right)\|_{L^2}^2 + \|\mathbf{u}\left(t\right)\|_{L^2}^2 \right) \quad a.e.\ on\ \left(0,T\right). \label{eq: uuc2} 
\end{align}
Integrating inequality \eqref{eq: uuc2} from $0$ to $\tau$ for any $\tau \in \left(0,T\right)$ we get,
\begin{align}
\int_0^\tau \frac{1}{2} \frac{d}{dt} & \Big(\|C\left(t\right)\|^2_{L^2} 
+ \hat \delta\|\boldsymbol{\nabla} C\left(t\right)\|^2_{L^2} +\|\mathbf u\left(t\right)\|_{L^2}^2  \Big) dt \nonumber \\ 
& \le \int_0^\tau\Big(\left(M + \hat \delta A\left(\epsilon\right)\right)\|C\left(t\right)\|^2_{L^2} + A\left(\epsilon\right) \|\boldsymbol{\nabla}\mathbf u_1\left(t\right)\|^2_{L^2} \|\boldsymbol{\nabla} C\left(t\right)\|^2_{L^2} \nonumber \\ 
& + A\left(\epsilon\right) \|\boldsymbol{\nabla}\mathbf u\left(t\right)\|^2_{L^2} \|\mathbf u\left(t\right)\|^2_{L^2} + \hat \delta  A\left(\epsilon\right) \|\boldsymbol{\nabla} C\left(t\right)\|^{2}_{L^2} \|\boldsymbol{\nabla} \mathbf u_1\|^{2}_{L^2} \nonumber \\ 
& + A\left(\epsilon\right)\|F\left(C_1\left(t\right)\right)\|^2_{L^2}\|\mathbf u\left(t\right)\|_{L^2}^2 + \|F\left(C_1\left(t\right)\right)-F\left(C_2\left(t\right)\right)\|^2_{L^2} \nonumber \\ 
& + A\left(\epsilon\right)\|\boldsymbol{\nabla}\mathbf u_2\left(t\right)\|^2_{L^2}\|\mathbf u\left(t\right)\|^2_{L^2} + A\left(\epsilon\right) \|\Delta C_1\left(t\right)\|_{L^2}^2 \left( \|\boldsymbol{\nabla} C\left(t\right)\|_{L^2}^2 + \|\mathbf{u}\left(t\right)\|_{L^2}^2 \right)\Big)dt. \label{eq: uuc3} 
\end{align}
Using assumption \eqref{assump:A1} we get,
\begin{align}
   \|F\left(C_1\right)-F\left(C_2\right)\|^2_{L^2\left(0,\tau;L^2\right)}\le M \|C\|^2_{L^2\left(0,\tau;L^2\right)} \quad \forall \tau \in \left(0,T\right). \label{eq: FMVT}
\end{align}
Then, by applying inequality \eqref{eq: FMVT} to inequality \eqref{eq: uuc3}, we obtain the following:
\begin{align}
\int_0^\tau \frac{1}{2} \frac{d}{dt} & \Big(\|C\left(t\right)\|^2_{L^2} + \hat \delta\|\boldsymbol{\nabla} C\left(t\right)\|^2_{L^2} +\|\mathbf u\left(t\right)\|_{L^2}^2 \Big) dt \nonumber \\ 
& \le  \left(2M+\hat \delta A\left(\epsilon\right) \right)\|C\|^2_{L^2\left(0,\tau;L^2\right)} + \int_0^\tau\Big(A\left(\epsilon\right) \|\boldsymbol{\nabla}\mathbf u_1\left(t\right)\|^2_{L^2} \|\boldsymbol{\nabla} C\left(t\right)\|^2_{L^2} \nonumber \\
& + A\left(\epsilon\right) \|\boldsymbol{\nabla}\mathbf u\left(t\right)\|^2_{L^2} \|\mathbf u\left(t\right)\|^2_{L^2} +\hat \delta  A\left(\epsilon\right) \|\boldsymbol{\nabla} C\left(t\right)\|^{2}_{L^2} \|\boldsymbol{\nabla} \mathbf u_1\|^{2}_{L^2} \nonumber \\
& + A\left(\epsilon\right)\|F\left(C_1\left(t\right)\right)\|^2_{L^2}\|\mathbf u\left(t\right)\|_{L^2}^2 + A\left(\epsilon\right)\|\boldsymbol{\nabla}\mathbf u_2\left(t\right)\|^2_{L^2}\|\mathbf u\left(t\right)\|^2_{L^2} \nonumber \\ 
& + A\left(\epsilon\right) \|\Delta C_1\left(t\right)\|_{L^2}^2 \left( \|\boldsymbol{\nabla} C\left(t\right)\|_{L^2}^2 + \|\mathbf{u}\left(t\right)\|_{L^2}^2 \right)\Big)dt. \label{eq: uuc4} 
\end{align}
Now, defining
\begin{align}
    \Phi\left(t\right) & = \frac{1}{2}\Big(2M+\hat \delta A\left(\epsilon\right)+A\left(\epsilon\right)\left(1+\hat \delta\right) \|\boldsymbol{\nabla}\mathbf u_1\left(t\right)\|^2_{L^2} \nonumber\\ 
    & + A\left(\epsilon\right) \|\Delta C_1\left(t\right)\|_{L^2}^2+A\left(\epsilon\right) \|\boldsymbol{\nabla}\mathbf u\left(t\right)\|^2_{L^2} + A\left(\epsilon\right)\left(1+\hat \delta\right) \|\boldsymbol{\nabla}\mathbf u_2\left(t\right)\|^2_{L^2}\Big),
\end{align}
and using \eqref{eq: uuc4}, we obtain
\begin{align}
    \int_0^\tau \frac{d}{dt} 
       \Big(\|C\left(t\right)\|^2_{L^2} + \hat \delta\|\boldsymbol{\nabla} C\left(t\right)\|^2_{L^2} +\|\mathbf u\left(t\right)\|_{L^2}^2  \Big) dt
   \nonumber\\ \le \int_0^\tau \Phi\left(t\right)\Big(\|C\left(t\right)\|^2_{L^2} + \hat \delta\|\boldsymbol{\nabla} C\left(t\right)\|^2_{L^2} +\|\mathbf u\left(t\right)\|_{L^2}^2  \Big) dt. \label{eq: uniqueness u+C}
\end{align}
This can be written as,
\begin{align}
  \int_0^\tau \frac{d}{dt} 
     \Big(  \exp{\Big(-\int_0^t \Phi\left(s\right)ds\Big)} \Big(\|C\left(t\right)\|^2_{L^2} + \hat \delta\|\boldsymbol{\nabla} C\left(t\right)\|^2_{L^2} +\|\mathbf u\left(t\right)\|_{L^2}^2  \Big) \Big) dt \le 0.
\end{align}
This implies,
\begin{align}
     \exp{\Big(-\int_0^\tau \Phi\left(s\right)dt\Big)} \Big(\|C\left(\tau\right)\|^2_{L^2} + \hat \delta\|\boldsymbol{\nabla} C\left(\tau\right)\|^2_{L^2} +\|\mathbf u\left(\tau\right)\|_{L^2}^2  \Big) \nonumber\\ \le \Big(\|C\left(0\right)\|^2_{L^2} + \hat \delta\|\boldsymbol{\nabla} C\left(0\right)\|^2_{L^2} +\|\mathbf u\left(0\right)\|_{L^2}^2  \Big) \quad \forall \tau \in \left(0,T\right).
\end{align}
Now $\mathbf u\left(0\right)=0$ and $C\left(0\right)=0$ ensures the uniqueness of the solution $\mathbf u$ and $C$.

The uniqueness of the pressure again follows from de Rham's theorem (\emph{\citep[Theorem IV.2.4]{boyer2012mathematical}}). Thus, the weak solution to the system is unique.

\begin{corollary}
    In the absence of a chemical reaction ($\kappa = 0$), equation \eqref{uniform boundedness of C_n 1st} in Lemma \ref{lemma:L1} takes the following form:
\begin{align}
    \frac{1}{2}\frac{d}{dt}\|C_n\left(t\right)\|_{L^2}^2+d\|\boldsymbol{\nabla}{C_n\left(t\right)}\|^2_{L^2}=0 \quad a.e. \ on \ (0,T). \label{uniform boundedness of C_n no reaction}
\end{align}
Integrating \eqref{uniform boundedness of C_n no reaction} from $0$ to $T$ gives us 
$C_n$ is uniformly bounded on $L^\infty(0,T;L^2(\Omega))\cap L^2(0,T;H^1(\Omega))$ for any $T>0$. Hence, for $\kappa=0$ we get the existence of the solution for any $T>0$. 
\end{corollary}

\section{Some special cases} \label{sec:special_cases}

In this section, we discuss some particular $F(C)$ that are relevant to specific cases of carbon sequestration and/or oil recovery. Although the polynomial forms of $F(C)$ are not of direct practical interest, we include them here to complete the mathematical analysis of the model. 

\begin{corollary}
\textbf{Constant Function\ (\( F\left(C\right) = a \), with \( a \geq 0 \)):}
\end{corollary}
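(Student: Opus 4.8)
The plan is to deduce the corollary directly from the two main theorems by verifying that the constant mobility $F(C)=a$ satisfies the hypotheses; among Assumptions~\ref{assump:A1}--\ref{assump:A3}, only Assumption~\ref{assump:A1} involves $F$ and so is the only one requiring a check. First I would confirm the mapping property $F:L^2(0,T;\mathscr V_2)\to L^2(0,T;H^1(\Omega))$. For any $C\in L^2(0,T;\mathscr V_2)$, the image $F(C)=a$ is a spatial constant, which on the bounded domain $\Omega$ lies in $H^1(\Omega)$ with $\boldsymbol{\nabla} a=0$ and $\|a\|_{H^1}^2=a^2|\Omega|$; integrating over the finite interval yields $\|F(C)\|_{L^2(0,T;H^1)}^2=a^2|\Omega|T<\infty$, so the target space is correct.

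Next I would verify the Lipschitz-like estimate. Because $F$ is constant in $C$, for any $C_1,C_2$ one has $F(C_1)-F(C_2)=a-a=0$, whence
\begin{equation*}
\|F(C_1)-F(C_2)\|_{L^2(0,T;H^1)}^2=0\le M\|C_1-C_2\|_{L^2(0,T;L^2)}^2
\end{equation*}
for any constant $M\ge0$. Thus Assumption~\ref{assump:A1} holds vacuously, while Assumptions~\ref{assump:A2} and~\ref{assump:A3} are independent of $F$ and are in force by hypothesis.

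With all three assumptions satisfied, I would invoke Theorem~\ref{theorem} in the reactive case $\kappa>0$ and Theorem~\ref{th: no reaction} in the non-reactive case $\kappa=0$. This delivers, for the constant mobility, the existence and uniqueness of the weak solution $(\mathbf u,C)$ in the sense of Definition~\ref{def: weak solution} with the stated regularity --- on the time window $T<2\min\{\kappa,d\}/(\kappa^2\mathscr M^2\|C_0\|_{L^2}^2)$ when $\kappa>0$, and for all $T>0$ when $\kappa=0$ --- together with the nonnegativity of $C$ for nonnegative initial data and, for $\kappa=0$, the exponential-in-time convergence of $C$ to $\int_\Omega C_0$ in the $L^2$-norm.

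I expect no genuine obstacle: the constant function is precisely the degenerate case in which the feedback of the concentration on the mobility is switched off, so the Lipschitz bound of Assumption~\ref{assump:A1} is satisfied trivially rather than through any estimate. The only substantive remark is that a spatial constant belongs to $H^1$ of the bounded domain $\Omega$, which is immediate; a minor point worth noting is that when $a=0$ the drag term $(F(C)\mathbf u,\mathbf v)$ and the associated nonnegative quantity $\|\sqrt{F(C_n)}\,\mathbf u_n\|_{L^2}^2$ simply vanish, which does not disturb any of the a priori estimates since that term enters only as a nonnegative contribution to be discarded.
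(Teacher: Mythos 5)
Your proposal is correct and follows the same route as the paper: the paper's own argument simply notes that $F$ maps into $L^2(0,T;H^1)$ and declares the rest straightforward, while you spell out the (trivial) verification of the Lipschitz-like condition and the $H^1$ membership of a spatial constant on the bounded domain. Nothing differs in substance; you have merely filled in the details the paper omits.
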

\begin{proof}
       In this case, \( F \) is a mapping \( F : L^2\left(0,T;\mathscr V_2\right) \to L^2\left(0,T;H^1\right) \). It is straightforward to show that Theorems \ref{th: existence}--\ref{th: Uniqueness} hold.
\end{proof}

    \begin{corollary}
    \textbf{Polynomial Function (\( F\left(C\right) = a_0 + a_1C + \dots + a_lC^l \), where \( a_i \geq 0 \) for \( 0 \leq i \leq l \)) with $C_0>0$:}
    \end{corollary}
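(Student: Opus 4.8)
The plan is to verify that the polynomial mobility $F(C)=\sum_{j=0}^{k}a_jC^j$ satisfies Assumption~\ref{assump:A1}, since Assumptions~\ref{assump:A2} and~\ref{assump:A3} concern only the forcing $\mathbf f$ and the pressure normalisation and are therefore untouched by the choice of $F$. Once Assumption~\ref{assump:A1} is in force, Theorems~\ref{theorem} and~\ref{th: no reaction} apply verbatim, so the whole argument reduces to two verifications: that $F$ maps $L^2(0,T;\mathscr V_2)$ into $L^2(0,T;H^1(\Omega))$, and that $F$ obeys the stated Lipschitz-type bound. Throughout I would exploit that $\Omega\subset\mathds R^2$ has $\mathcal C^1$ boundary, so that the Sobolev embedding $H^2(\Omega)\hookrightarrow L^\infty(\Omega)$ holds and every power $C^j$ of a function $C\in\mathscr V_2$ is again controlled in $L^\infty$.

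For the mapping property I would argue pointwise in time: for $C(t)\in\mathscr V_2$ each monomial satisfies $\|C^j(t)\|_{L^2}\le\|C(t)\|_{L^\infty}^{j-1}\|C(t)\|_{L^2}$ and, by the chain rule, $\nabla(C^j(t))=jC^{j-1}(t)\nabla C(t)$, whence $\|\nabla(C^j(t))\|_{L^2}\le j\|C(t)\|_{L^\infty}^{j-1}\|\nabla C(t)\|_{L^2}$. Bounding the $L^\infty$ factors by $\|C(t)\|_{H^2}$ through $H^2\hookrightarrow L^\infty$ and then integrating in time against the a~priori regularity $C\in L^\infty(0,T;\mathscr S_2)\cap L^2(0,T;\mathscr V_2)$ secured by Lemmas~\ref{lemma:L1}--\ref{lemma:L2}, I would conclude $F(C)\in L^2(0,T;H^1(\Omega))$. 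The same computation shows that $\|F(C_n)\|_{L^2(0,T;H^1)}$ stays uniformly bounded along the Galerkin sequence, which is exactly the ingredient Lemma~\ref{lemma:L4} consumes.

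For the Lipschitz-type estimate I would start from the algebraic factorisation $C_1^{\,j}-C_2^{\,j}=(C_1-C_2)\sum_{i=0}^{j-1}C_1^{\,i}C_2^{\,j-1-i}$, so that $F(C_1)-F(C_2)=(C_1-C_2)\,P(C_1,C_2)$ with $P$ a polynomial in $C_1,C_2$ whose $L^\infty$ norm is controlled, again via $H^2\hookrightarrow L^\infty$, by $\|C_1\|_{H^2}$ and $\|C_2\|_{H^2}$. This immediately yields the $L^2$ bound $\|F(C_1)-F(C_2)\|_{L^2}\le\|P\|_{L^\infty}\|C_1-C_2\|_{L^2}$, which, after squaring and integrating in time with the constant allowed to depend on the a~priori norms of the solutions, is precisely the estimate~\eqref{eq: FMVT} invoked in the uniqueness argument.

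The main obstacle is the gradient part of the claimed $H^1$ Lipschitz bound. Differentiating gives $\nabla\big(F(C_1)-F(C_2)\big)=F'(C_1)\nabla(C_1-C_2)+\big(F'(C_1)-F'(C_2)\big)\nabla C_2$, and the first summand contains $\nabla(C_1-C_2)$, which cannot be dominated by $\|C_1-C_2\|_{L^2}$ alone once $k\ge1$. Hence the $H^1$-versus-$L^2$ inequality of Assumption~\ref{assump:A1} cannot hold for arbitrary arguments, and the honest reading is that what the polynomial genuinely supplies—and all that the proofs actually use—is the $L^2$ Lipschitz bound (in uniqueness) together with the uniform $H^1$ boundedness of $F(C_n)$ (in Lemma~\ref{lemma:L4}), both on the bounded solution class where the constants may depend on the a~priori norms. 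I would also record one compatibility check specific to the energy identity of Lemma~\ref{lemma:L2}: the hypothesis $a_i\ge0$, combined with the nonnegativity of the concentration established in Theorems~\ref{theorem}--\ref{th: no reaction}, guarantees $F(C)\ge0$, so that the dissipative term $\|\sqrt{F(C)}\,\mathbf u\|_{L^2}^2$ remains well defined.
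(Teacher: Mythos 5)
Your proposal is correct in substance and reaches the same conclusion as the paper, but by a noticeably different technical route. The paper never invokes $H^2(\Omega)\hookrightarrow L^\infty(\Omega)$: it controls the monomials through $L^4$ bounds obtained from the Gagliardo--Nirenberg inequality (Theorem \ref{Th: gagliardo}) and the Rellich--Kondrachov theorem, writing $\|\boldsymbol{\nabla} F(C(t))\|_{L^2}\le\|F'(C(t))\|_{L^4}\|\boldsymbol{\nabla} C(t)\|_{L^4}\le M(1+\|\Delta C(t)\|_{L^2})$ and integrating in time via Lemma \ref{lemma:L2}; your $L^\infty$-based chain-rule bound is equally valid in two dimensions and arguably more direct. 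Both arguments deliver the same usable ingredients: uniform boundedness of $F(C_n)$ in $L^2(0,T;H^1)$ (consumed by Lemma \ref{lemma:L4}), the $L^2$-to-$L^2$ Lipschitz estimate \eqref{eq: FMVT} (consumed by uniqueness), the convergence $(F(C_n)\mathbf u_n,\mathbf v)\to(F(C)\mathbf u,\mathbf v)$, and the sign condition making $\sqrt{F(C_n)}$ meaningful. Your observation that the full Assumption \ref{assump:A1} --- an $H^1$ norm of $F(C_1)-F(C_2)$ dominated by an $L^2$ norm of $C_1-C_2$ --- cannot hold for any non-constant polynomial is correct and sharper than what the paper records: the paper's corollary quietly downgrades the claim to the $L^2$-to-$L^2$ inequality, which is indeed all that the proofs of Theorems \ref{theorem} and \ref{th: no reaction} use, and the paper's conclusion itself concedes that the analysis does not fully cover polynomials. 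One caveat applies equally to your argument and to the paper's: the nonnegativity $F(C_n)\ge0$ needed in the energy identity of Lemma \ref{lemma:L2} is justified by the nonnegativity of the concentration (or by strong convergence of $C_n$), both of which are only available after existence has been established, so a mild circularity remains in either treatment.
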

    \begin{proof}
    
\textbf{Two dimensional case:}
    For a polynomial \( F\left(C\right) \), we establish the following:
    \begin{itemize}
        \item Lemmas \ref{lemma:L1} and \ref{lemma:L3} can be verified.  
        \item Proceeding similarly as the proof of positiveness of solute concentration  
        we can say that\[
C_n \ge 0 \quad a.e. \ on \ (0,T) \times \Omega.
\] This implies $\|\sqrt{ F\left(C_n\left(t\right)\right) }\mathbf u_n\left(t\right) \|_{L^2}^2$ is well defined almost everywhere on  $\left(0,T\right)$, which indicates the validity of Lemma \ref{lemma:L2}.
        \item To prove Lemma \ref{lemma:L4}, it suffices to show that 
        \(
        \|\boldsymbol{\nabla} F\left(C\left(t\right)\right)\|_{L^2\left(0,T;L^2\left(\Omega\right)\right)} < \infty.
        \)
        \begin{align}
            \|\boldsymbol{\nabla} F\left(C\left(t\right)\right)\|_{L^2} &\leq \|\left(a_1C\left(t\right) + 2a_2C\left(t\right)^2 + \dots + la_lC\left(t\right)^{l-1}\right) \boldsymbol{\nabla} C\left(t\right)\|_{L^2} \nonumber\\
            &\leq \|\left(a_1C\left(t\right) + 2a_2C\left(t\right)^2 + \dots + la_lC\left(t\right)^{l-1}\right)\|_{L^4} \|\boldsymbol{\nabla} C\left(t\right)\|_{L^4}.
        \end{align}
        Using Gagliardo–Nirenberg inequality (Theorem \ref{Th: gagliardo}), Rellich-Kondrasov theorem (\emph{\citep[Theorem 2.6.3]{kesavan2015topics}}), Lemma \ref{lemma:L2}, and Young' inequality we obtain,
     \begin{align}
        \|\boldsymbol{\nabla} F\left(C\left(t\right)\right)\|_{L^2} \leq M\left(1+ \|\Delta C\left(t\right)\|_{L^2}\right) \quad a.e. \ on \ \left(0,T\right).
         \end{align}
        Thus, \( \|\boldsymbol{\nabla} F\|_{L^2\left(0,T;L^2\left(\Omega\right)\right)} < \infty \), and Lemma \ref{lemma:L4} is satisfied.
        
        \item Furthermore, using a similar approach as in the above inequality, it can be shown that
        \[
        \|F\left(C_n\left(t\right)\right) - F\left(C\left(t\right)\right)\|_{L^2} \to 0 \quad a.e.\ on \ \left(0,T\right).
        \]
        This establishes the validity of \eqref{eq: Fu}.  
        \item Additionally, a Lipschitz-like condition,
        \[
        \|F\left(C_1\right) - F\left(C_2\right)\|^2_{L^2\left(0,\tau;L^2(\Omega)\right)} \leq M \|C_1 - C_2\|^2_{L^2\left(0,\tau;H^1(\Omega)\right)}
        \]
        is satisfied.
   \end{itemize}
Hence, Theorems \ref{th: existence}--\ref{th: Uniqueness} hold in this case as well.
    
\textbf{Three-dimensional case:} In three dimensions, the Theorems \ref{th: existence} and \ref{th: bddness & blow_up} remain valid provided that either of the following conditions holds:
    \begin{enumerate}
        \item The initial concentration satisfies $0\le C_0\le 1$ almost everywhere on $\Omega$.
        \item The domain $\Omega$ has a boundary of class $\mathcal{C}^2$.
    \end{enumerate}
\begin{enumerate}
    \item If  $0\le C_0\le 1$ almost everywhere in $\Omega$, then proceeding similarly as the proof of Theorem \ref{th: bddness & blow_up} we say that
\begin{align}
0\leq C_n\leq1\quad a.e. \ on\ (0,T)\times \Omega. \label{ineq: 3d pol 0<c<1}
\end{align}
Using this estimate in equation \eqref{L2u1} we obtain the following inequality,
  \begin{align}
      \frac{1}{2} \frac{d}{dt} \left\| \mathbf{u}_n\left(t\right) \right\|_{L^2}^2 +\mu_e\left(\boldsymbol{\nabla} \mathbf{u}_n\left(t\right),\boldsymbol{\nabla} \mathbf{u}_n\left(t\right)\right) + a_0\left(
\mathbf{u}_n\left(t\right),\mathbf{u}_n\left(t\right)\right) = \left \langle \boldsymbol{\nabla} \cdot \mathbf{T}\left(C_n\left(t\right)\right),\mathbf{u}_n\left(t\right) \right \rangle \nonumber\\+\left(\mathbf f\left(t\right),\mathbf{u}_n\left(t\right)\right) \quad a.e. \ on\ \left(0,T\right).
        \end{align}
The rest of the proof of Lemma \ref{lemma:L2} follows as before. 
 To prove Lemma \ref{lemma:L4}, we need to bound the term $\left|\left(F(C_n(t))\mathbf u_n(t), \mathbf w^1\right)\right|$, which can be achieved using the upper bound of $C_n.$ The rest of the proof of Theorem \ref{th: existence} follows as before.
 \item If $\Omega$ has a boundary of class $\mathcal C^2$ then we have the embedding \begin{align}
     H^2(\Omega) \hookrightarrow L^\infty(\Omega).
 \end{align}
 Thus we have $C\in L^2(0,T;L^\infty(\Omega)).$
    \end{enumerate}
    \end{proof} 

    \begin{corollary}
    \textbf{Exponential Function (\( F\left(C\right) = \exp\left(RC\right) \), with \( R \in \mathds{R} \)):} 
    \end{corollary}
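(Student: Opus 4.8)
The plan is to follow the same verification scheme used for the polynomial corollary, checking in turn that Lemmas~\ref{lemma:L1}--\ref{lemma:L4}, the passage to the limit \eqref{eq: Fu}, and the Lipschitz-like condition of Assumption~\ref{assump:A1} all remain valid for $F(C)=\exp(RC)$. The first simplification is that $F(C)=\exp(RC)>0$ pointwise for every real $R$, so the quantity $\|\sqrt{F(C_n(t))}\,\mathbf u_n(t)\|_{L^2}^2$ is automatically well defined and no sign argument is needed; Lemmas~\ref{lemma:L1} and \ref{lemma:L3} do not involve $F$ and carry over verbatim, while Lemma~\ref{lemma:L2} then follows exactly as before.

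The key analytic input is the exponential integrability of $C$ in two dimensions. First I would record that the positivity result gives $C\ge 0$ a.e., so for $R\le 0$ one has $\exp(RC)\le 1$ pointwise and all estimates are immediate. For $R>0$ I would use that, for a.e.\ $t$, $C(t)\in\mathscr V_2\subset H^2(\Omega)\hookrightarrow L^\infty(\Omega)$ in dimension two, so $\exp(RC(t))\in L^\infty(\Omega)$, and, to obtain bounds uniform in time, invoke the Moser--Trudinger inequality: since Lemma~\ref{lemma:L2} yields $C\in L^\infty(0,T;\mathscr S_2)=L^\infty(0,T;H^1(\Omega))$, the exponential moments $\|\exp(RC(t))\|_{L^p(\Omega)}$ are bounded for every finite $p$ by a constant depending only on $\sup_t\|C(t)\|_{H^1}$.

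With these moment bounds in hand the remaining items are routine. For Lemma~\ref{lemma:L4} I would write $\boldsymbol\nabla F(C)=R\exp(RC)\,\boldsymbol\nabla C$ and estimate
\[
\|\boldsymbol\nabla F(C(t))\|_{L^2}\le |R|\,\|\exp(RC(t))\|_{L^4}\,\|\boldsymbol\nabla C(t)\|_{L^4}\le M\,\|\exp(RC(t))\|_{L^4}\,\|C(t)\|_{H^1}^{1/2}\,\|C(t)\|_{H^2}^{1/2},
\]
using Gagliardo--Nirenberg (Theorem~\ref{Th: gagliardo}) on $\|\boldsymbol\nabla C(t)\|_{L^4}$; integrating in $t$ and using $C\in L^\infty(0,T;H^1)\cap L^2(0,T;H^2)$ together with the time-uniform $L^4$-moment bound gives $\|\boldsymbol\nabla F(C)\|_{L^2(0,T;L^2)}<\infty$. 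For the convergence \eqref{eq: Fu} I would combine the strong convergence $C_n\to C$ in $L^2(0,T;\mathscr S_2)$ (hence a.e.\ along a subsequence) with continuity of $\exp$ and a dominated-convergence argument, the dominating function being supplied by the uniform exponential moments. Finally, for the Lipschitz-like condition I would apply the mean value theorem pointwise, $|\exp(RC_1)-\exp(RC_2)|\le |R|\exp(R\xi)\,|C_1-C_2|$ with $\xi$ between $C_1$ and $C_2$, bound $\exp(R\xi)$ by $\exp(RC_1)+\exp(RC_2)$, and use H\"older followed by Gagliardo--Nirenberg and Young's inequality to reach $\|F(C_1)-F(C_2)\|_{L^2}^2\le \epsilon\|C_1-C_2\|_{H^1}^2+A(\epsilon)\|C_1-C_2\|_{L^2}^2$, the $H^1$ contribution being absorbed into the diffusive terms on the left-hand side of the uniqueness estimate exactly as the other nonlinear terms are handled there.

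The main obstacle I anticipate is precisely the control of the exponential moments when $R>0$: unlike the polynomial case, $\exp(RC)$ is not globally Lipschitz, and the a priori estimates give $C$ only in $L^\infty(0,T;H^1)\cap L^2(0,T;H^2)$, so the pointwise bound $\|C(t)\|_{L^\infty}\lesssim\|C(t)\|_{H^2}$ is merely $L^2$-integrable in time rather than uniformly bounded. The Moser--Trudinger inequality, together with the time-uniform $H^1$ bound from Lemma~\ref{lemma:L2}, is what converts this into genuinely time-uniform $L^p$ bounds on $\exp(RC)$, and confirming that its hypotheses are met on $\Omega$ is the delicate point on which the whole corollary rests.
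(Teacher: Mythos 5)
Your proposal is correct and follows the same overall verification scheme as the paper (check Lemmas~\ref{lemma:L1}--\ref{lemma:L4}, the limit passage \eqref{eq: Fu}, and the Lipschitz-like condition), but the key analytic ingredient is genuinely different. The paper controls $\|\exp(RC_n(t))\|_{L^4}$ by expanding the exponential as a power series and bounding each term via $\|C_n(t)\|_{L^k}^{4k}$ using the uniform $H^1$ bound and Sobolev embedding; you instead invoke the Moser--Trudinger inequality (supplemented by $H^2(\Omega)\hookrightarrow L^\infty(\Omega)$ pointwise in time, and the trivial bound $\exp(RC)\le 1$ when $R\le 0$ and $C\ge 0$). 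Your route is actually the more robust one: in two dimensions the embedding constant of $H^1(\Omega)\hookrightarrow L^k(\Omega)$ grows like $k^{1/2}$, so the general term of the paper's series behaves like $(R^4M^4\|C_n\|_{H^1}^4)^k\,k^{2k}/k!\sim (ek\,R^4M^4\|C_n\|_{H^1}^4)^k$, and the series as written does not obviously converge; Moser--Trudinger delivers exactly the time-uniform $L^p$ moments of $\exp(RC_n)$ that the argument needs, depending only on $\sup_t\|C_n(t)\|_{H^1}$ from Lemma~\ref{lemma:L2}. Two small caveats: your bounds must be stated for the Galerkin approximants $C_n$ (uniformly in $n$), not only for the limit $C$, which Moser--Trudinger does provide since the $H^1$ bound of Lemma~\ref{lemma:L2} is uniform; and your mean-value-theorem estimate yields $\|F(C_1)-F(C_2)\|_{L^2}^2\le \epsilon\|C_1-C_2\|_{H^1}^2+A(\epsilon)\|C_1-C_2\|_{L^2}^2$, which is weaker than the $L^2$-to-$L^2$ form \eqref{eq: FMVT} actually used in the uniqueness argument --- but, as you observe, the $\epsilon\|\boldsymbol{\nabla}(C_1-C_2)\|_{L^2}^2$ contribution can be absorbed into the coefficient $(d+\hat\delta\kappa-\hat\delta\epsilon)$ of $\|\boldsymbol{\nabla} C\|_{L^2}^2$ on the left of \eqref{eq: uuc1}, so uniqueness still goes through. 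Your dominated-convergence argument for \eqref{eq: Fu} is also a clean alternative to the paper's series manipulation in \eqref{eq: exp limit pass}.
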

    \begin{proof} 

\textbf{Two dimensional case:}
    For \( F\left(C\right) = \exp\left(RC\right) \), the following verifications are made:
    \begin{itemize}
        \item Lemmas \ref{lemma:L1}, \ref{lemma:L2}, and \ref{lemma:L3} are satisfied.
        
        \item To prove Lemma \ref{lemma:L4}, we establish that \( \|\exp\left(RC_n\right)\|_{L^2\left(0,T;H^1\left(\Omega\right)\right)} \) is finite. We have the following estimate,
        \begin{align}
        \|\exp\left(RC_n\left(t\right)\right)\|_{L^4}^4 &\leq \sum_{k=0}^\infty \int_\Omega \frac{\left(R^{4k} C_n^{4k}\left(\mathbf{x},t\right)\right)}{k!} \, d\Omega \nonumber \\
        &\leq \sum_{k=0}^\infty \frac{R^{4k}}{k!} \|C_n\left(t\right)\|_{L^k}^{4k}.
        \end{align}
        Using Lemma \ref{lemma:L2} and Rellich-Kondrasov theorem (\emph{\citep[Theorem 2.6.3]{kesavan2015topics}}), it follows that,
        \begin{align}
        \|\exp\left(RC_n\right)\|_{L^\infty\left(0,T;L^4\left(\Omega\right)\right)} < \infty \quad \forall n \in \mathds{N}.
        \end{align}
        Thus, we have,
        \begin{align}
            \|\boldsymbol{\nabla} F\left(C_n\left(t\right)\right)\|_{L^2} &\leq |R| \|\exp\left(RC_n\left(t\right)\right)\|_{L^4} \|\boldsymbol{\nabla} C_n\left(t\right)\|_{L^4}.
        \end{align}
        This implies \( F\left(C_n\right) \) is uniformly bounded in \( L^2\left(0,T;H^1\right) \), and Lemma \ref{lemma:L4} holds.

       \item Now using Lemma \ref{lemma:L2} and Rellich-Kondrasov theorem (\emph{\citep[Theorem 2.6.3]{kesavan2015topics}}) it can be shown that
       \begin{align}
 \|F\left(C_n\left(t\right)\right)-F\left(C\left(t\right)\right)\|^2_{L^2}\le& \|C_n\left(t\right)-C\left(t\right)\|^2_{L^4}  \sum_{k=0}^\infty \frac{1}{k!} \sum_{l=0}^{k-1} \|C_n^l\left(t\right)C^{k-1-l}\left(t\right)\|^2_{L^4} \nonumber\\
 \le& M \|C_n\left(t\right)-C\left(t\right)\|^2_{H^1} \quad a.e.\  on \ \left(0,T\right) \label{eq: exp limit pass},
 \end{align}
        which validates \eqref{eq: Fu}.
        
        \item Using similar techniques as of inequality \eqref{eq: exp limit pass} we can show,
 \begin{align}
     \|F\left(C_1\right) - F\left(C_2\right)\|^2_{L^2\left(0,\tau;L^2(\Omega)\right)} \le M \|C_1 - C_2\|^2_{L^2\left(0,\tau;H^1(\Omega)\right)}.
 \end{align} 
    Hence, Theorems \ref{th: existence}--\ref{th: Uniqueness} hold for the exponential case as well. 
\end{itemize}

For the three-dimensional case, the same additional assumptions as in the polynomial case are required, and the proof follows similarly.
\end{proof} 

\begin{remark}
   Thanks to Rellich-Kondrasov theorem (\emph{\citep[Theorem 2.6.3]{kesavan2015topics}}), we have the embedding $H^1(\Omega)\hookrightarrow L^q(\Omega)$ for $1\leq q <\infty$, which helps to prove polynomial and exponential cases for a two-dimensional domain. In contrast, in three dimensions, this result does not hold. For which we require an additional assumption on $C_0$ or a smoother domain. 
\end{remark}

\section{Numerical validation} \label{sec:numerical}

The initial-boundary value problem \eqref{eq:continuity}--\eqref{eq:initial_condition} is numerically solved using the finite element solver of COMSOL \textsc{Multiphysics} in a circular domain $\Omega (\subset \mathds R^2)$ of radius $0.5$ centered at the origin. Under suitable initial conditions, the non-negativity of the solute concentration, finite-time blow-up, and long-time decay to zero are discussed below. The weak formulation \eqref{var u}--\eqref{var C} are used for numerical computation. The computational domain is discretised using an unstructured triangular mesh. A grid independence study is performed for different element sizes to show that the solution is not significantly affected by spatial discretisation. For this purpose, a sequence of successively refined  meshes of element sizes 
\begin{align*}
h=2^{-4},2^{-5},2^{-6},2^{-7},2^{-8},2^{-9}   
\end{align*} 
have been considered. The convergence of the solution is examined by computing the relative absolute error between successive mesh refinements, defined as
\begin{align}
    \| e_h \|_{L^1} = \frac{\int_0^T\int_\Omega\left| C_{h}(\mathbf x,t) - C_{h/2}(\mathbf x,t)\right|d\mathbf xdt}{\int_0^T\int_\Omega\left|C_{h/2}(\mathbf x,t)\right|d\mathbf xdt} 
\end{align}

In particular, we choose 
\begin{align}
\label{eq:numerical_IC1}
 \mathbf u_0 = \mathbf 0, \qquad \text{and} \qquad   C_0 = \begin{cases}
       0.9, & x^2 + y^2 \leq 0.04, \\
       0, & \text{otherwise.}     \end{cases} 
\end{align}
The parameters are taken as $\mu_e = 0.1, d = 10^{-9}, \hat{\delta} = 10^{-8}, \kappa = 0.001, \mu = e^{3C} \times 10^{-3}, K = 10^{-10}$ to validate the long-time decay of the solute concentration as $t \rightarrow \infty$. 
The grid convergence study for this initial condition has been presented in Table \ref{tab:grid_independence}. 
\begin{table}[h!]
\centering
\caption{Grid independence study showing relative errors between successive meshes.}
\label{tab:grid_independence}
\begin{tabular}{cccccc}
\hline
$h$  
& $2^{-4}$ 
& $2^{-5}$
& $2^{-6}$ 
& $2^{-7}$
& $2^{-8}$
\\
\hline
$\|e_h\|_{L^1}$ 
& $2.1 \times 10^{-1}$ 
& $1.1 \times 10^{-1}$ 
& $4.4 \times 10^{-2}$ 
& $4.6 \times 10^{-2}$
& $1.8 \times 10^{-2}$
\\
\hline
\end{tabular}
\end{table}
It can be observed that the relative error decreases as the mesh is refined. In particular, the relative error between the two finest meshes ($2^{-8}$ and $2^{-9}$) is approximately $1.8\%$. This indicates that the numerical solution has reached a grid-independent state. Therefore, we take the mesh corresponding to the element size $h  = 2^{-8}$ for the subsequent simulations. 

\begin{figure}[h!]
     \centering
    \includegraphics[width=0.33\textwidth]{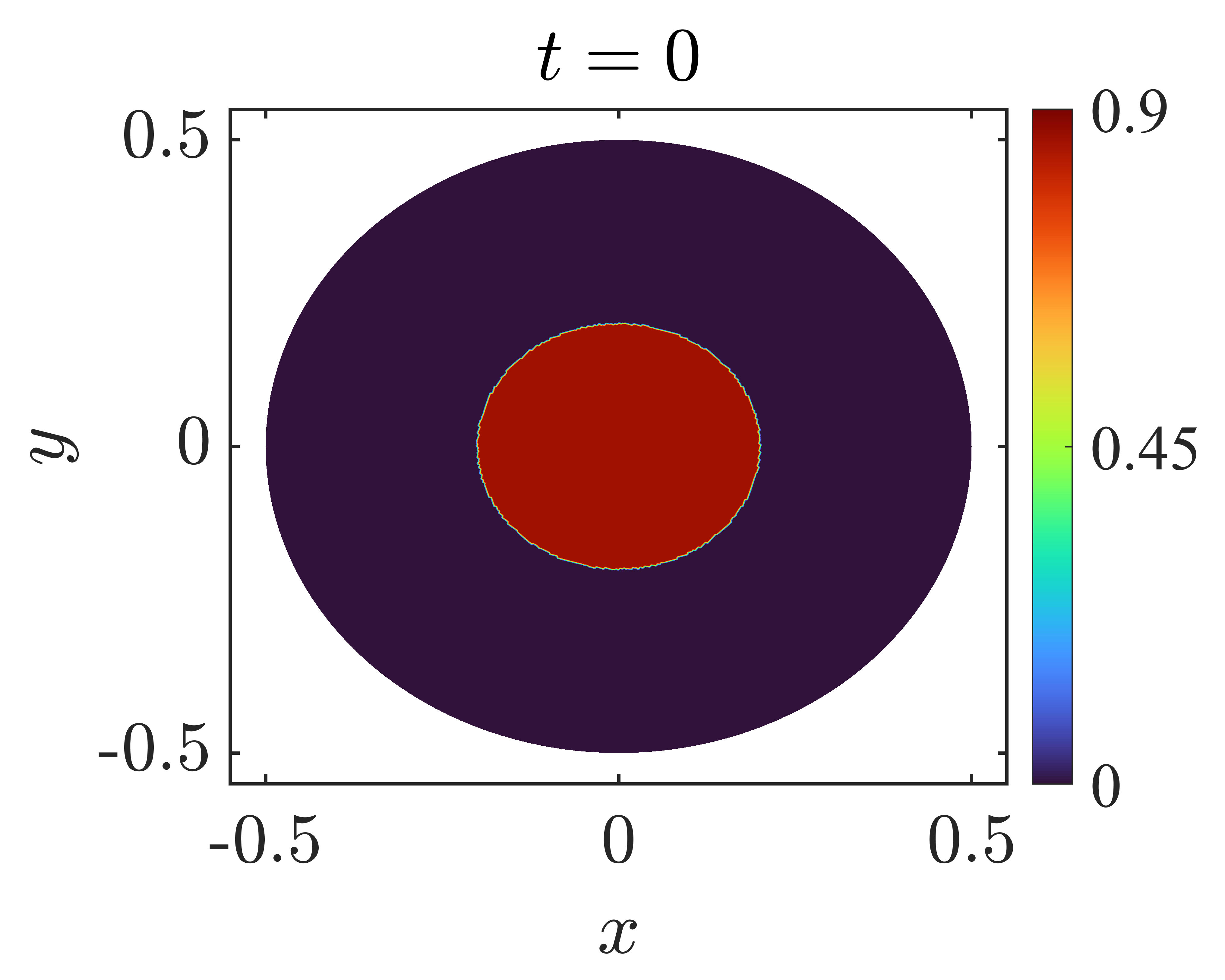}
     \includegraphics[width=0.33\textwidth]{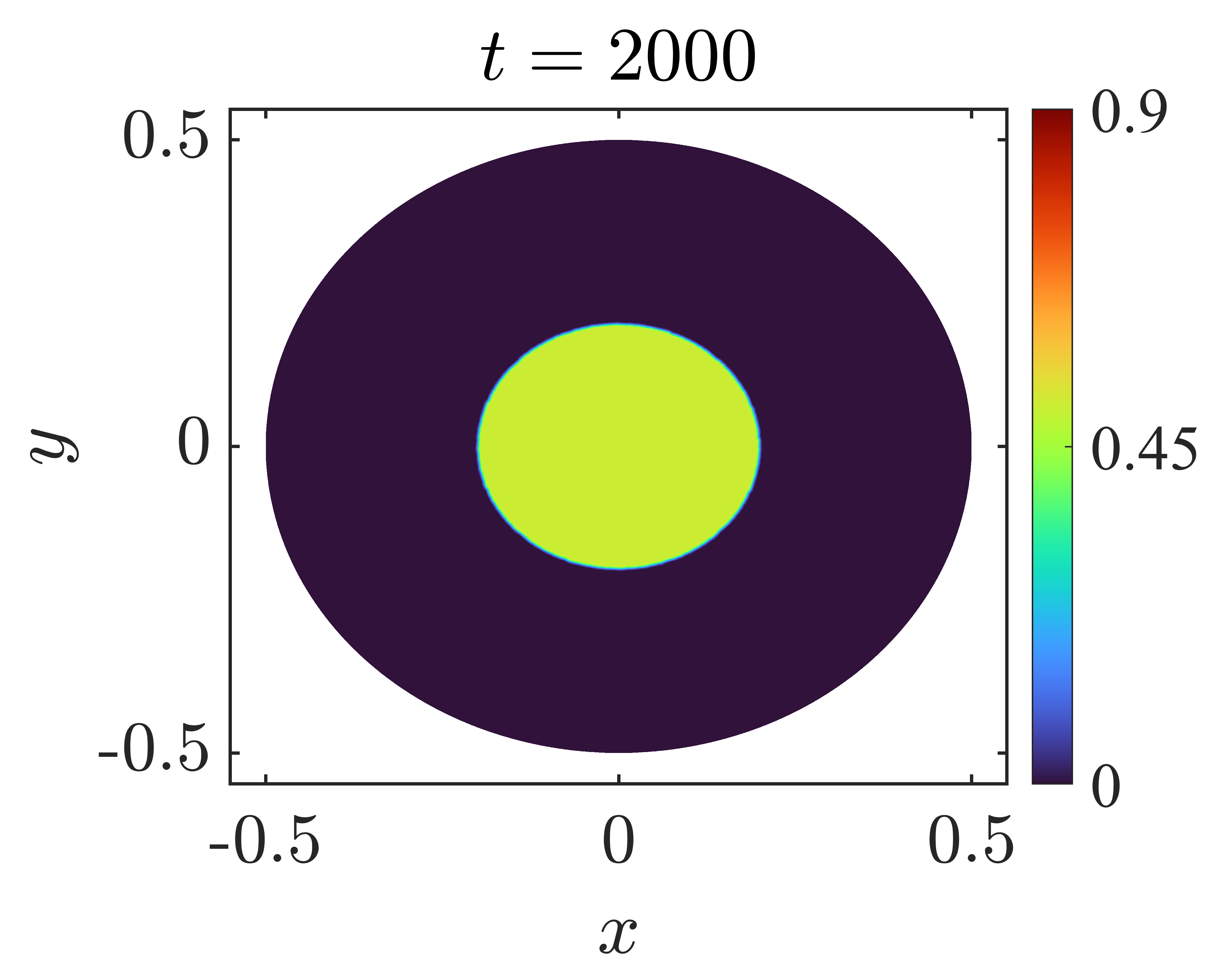}  
     \includegraphics[width=0.33\textwidth]{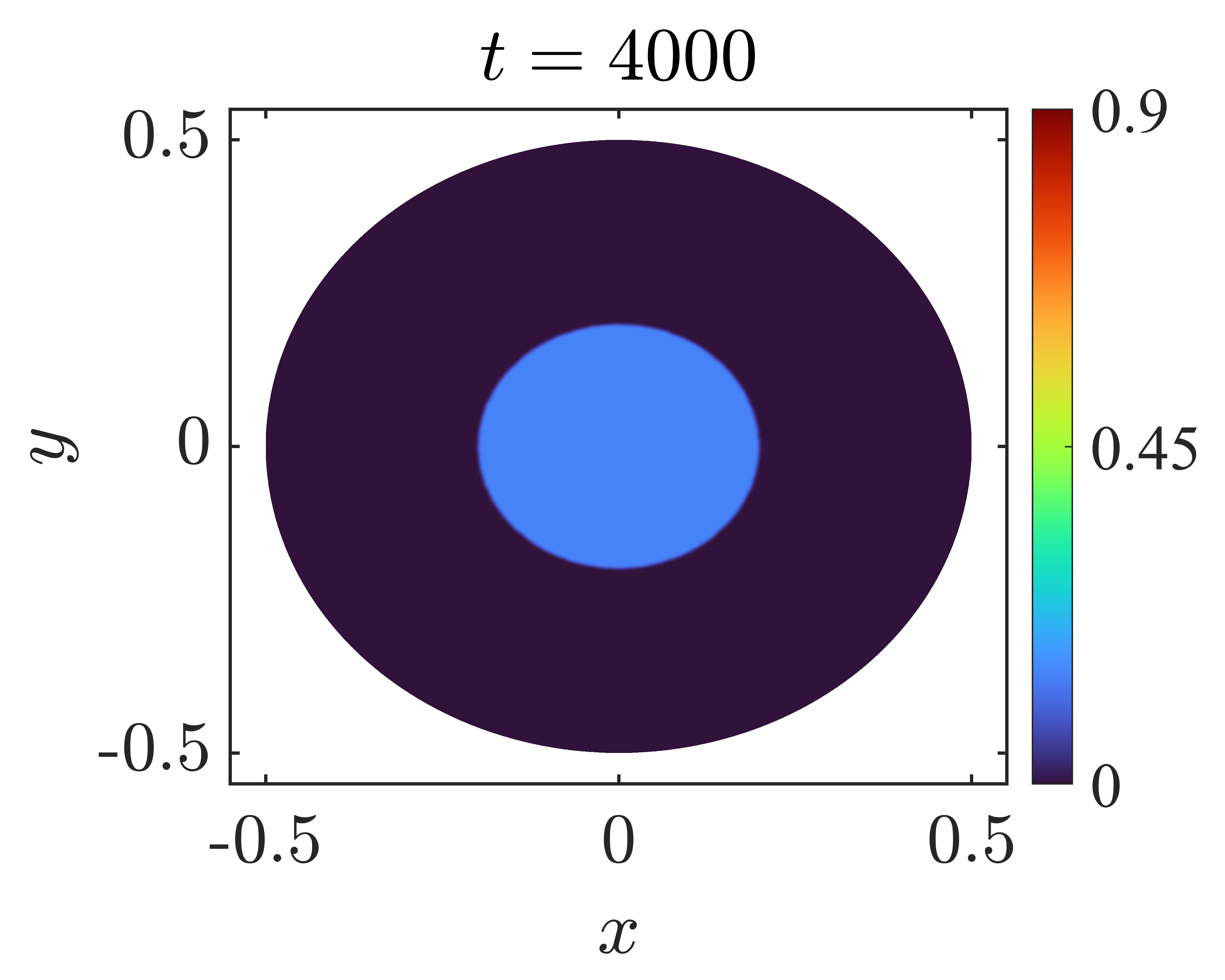} \\
    \includegraphics[width=0.33\textwidth]{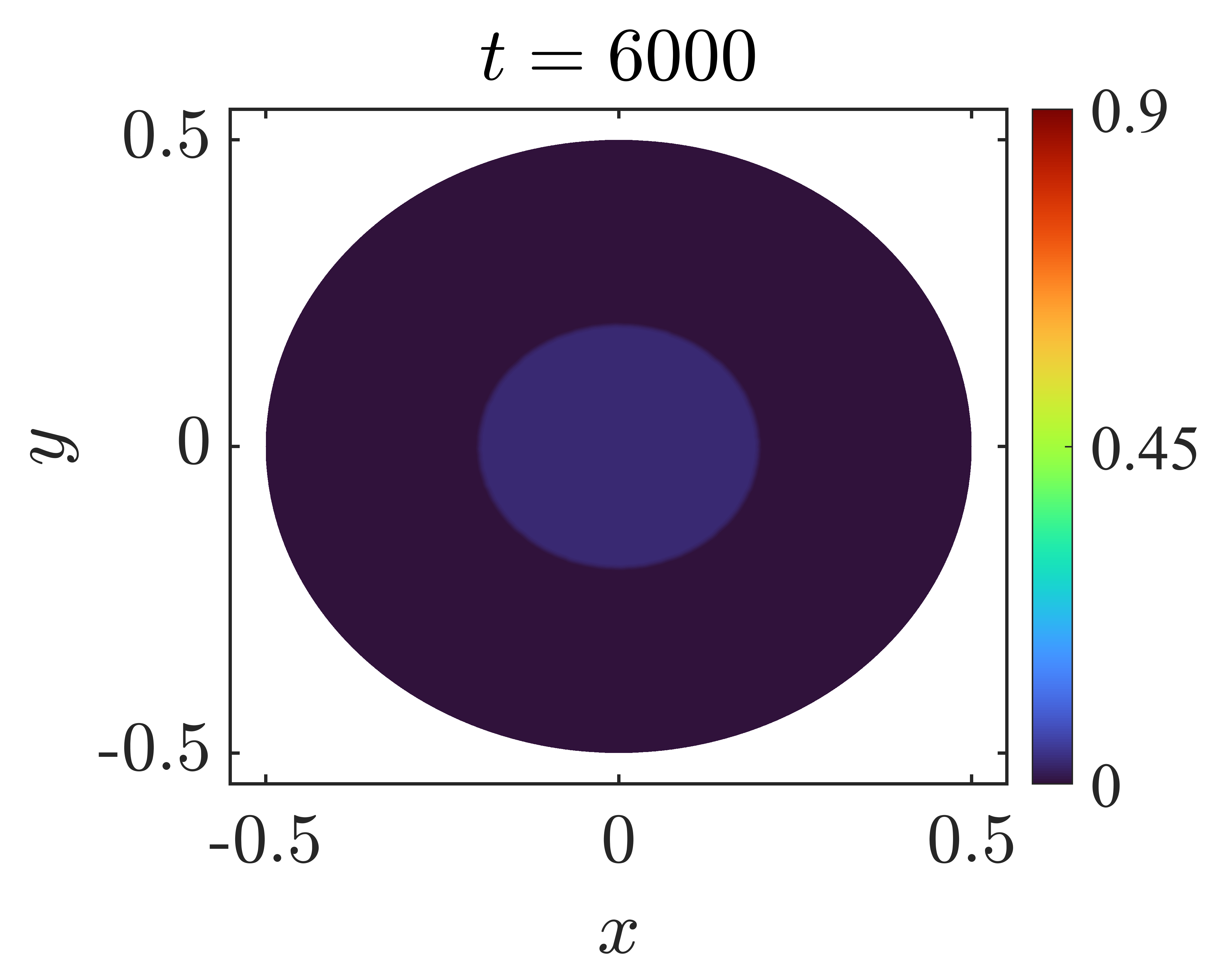}  
     \includegraphics[width=0.33\textwidth]{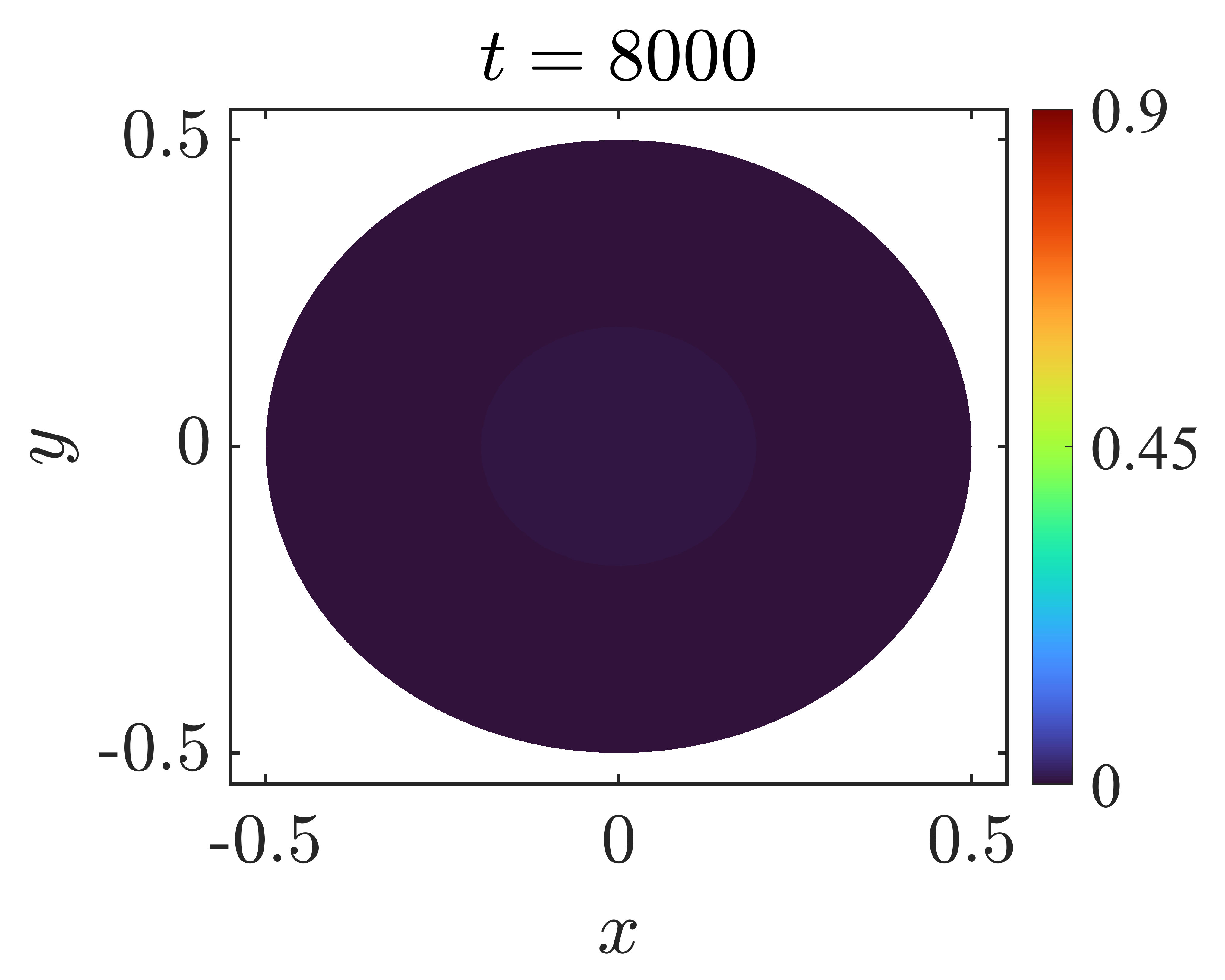}  
     \includegraphics[width=0.33\textwidth]{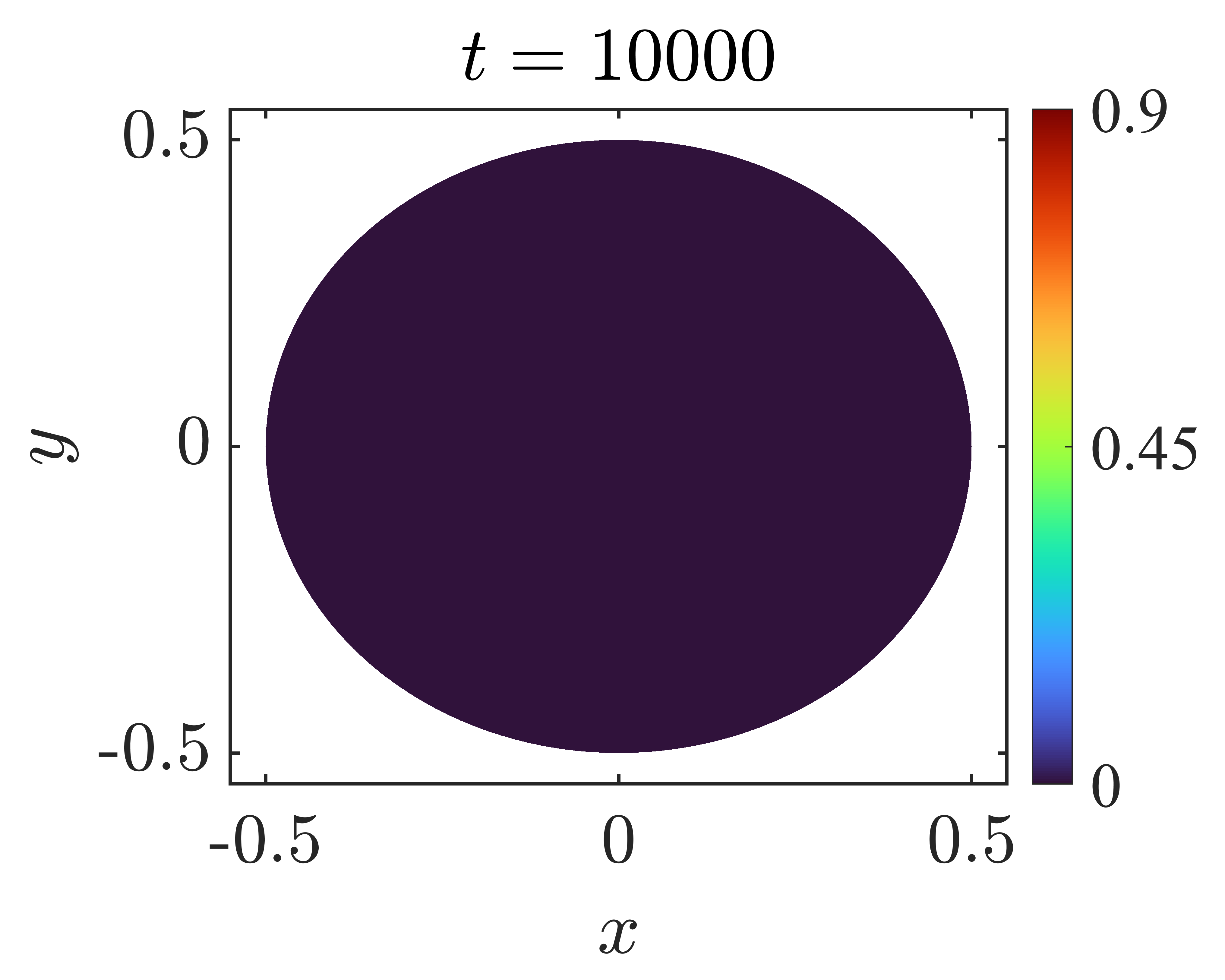}   
\caption{Contour plots of solute concentration $C$ for different time levels corresponding to the initial condition \eqref{eq:numerical_IC1}.}
\label{fig: contour}
\end{figure}

Figure \ref{fig: contour} depicts contour plots of the solute concentration at different time levels, showing non-negativity and decay to zero as time increases. Figure \ref{fig:long-time}(a) shows the exponential decay of the total mass over time, which agrees with the theoretical estimate in inequality \eqref{uniform boundedness C for all time 2nd}.  

\begin{figure}[h!]
\centering
\includegraphics[width=0.49\textwidth]{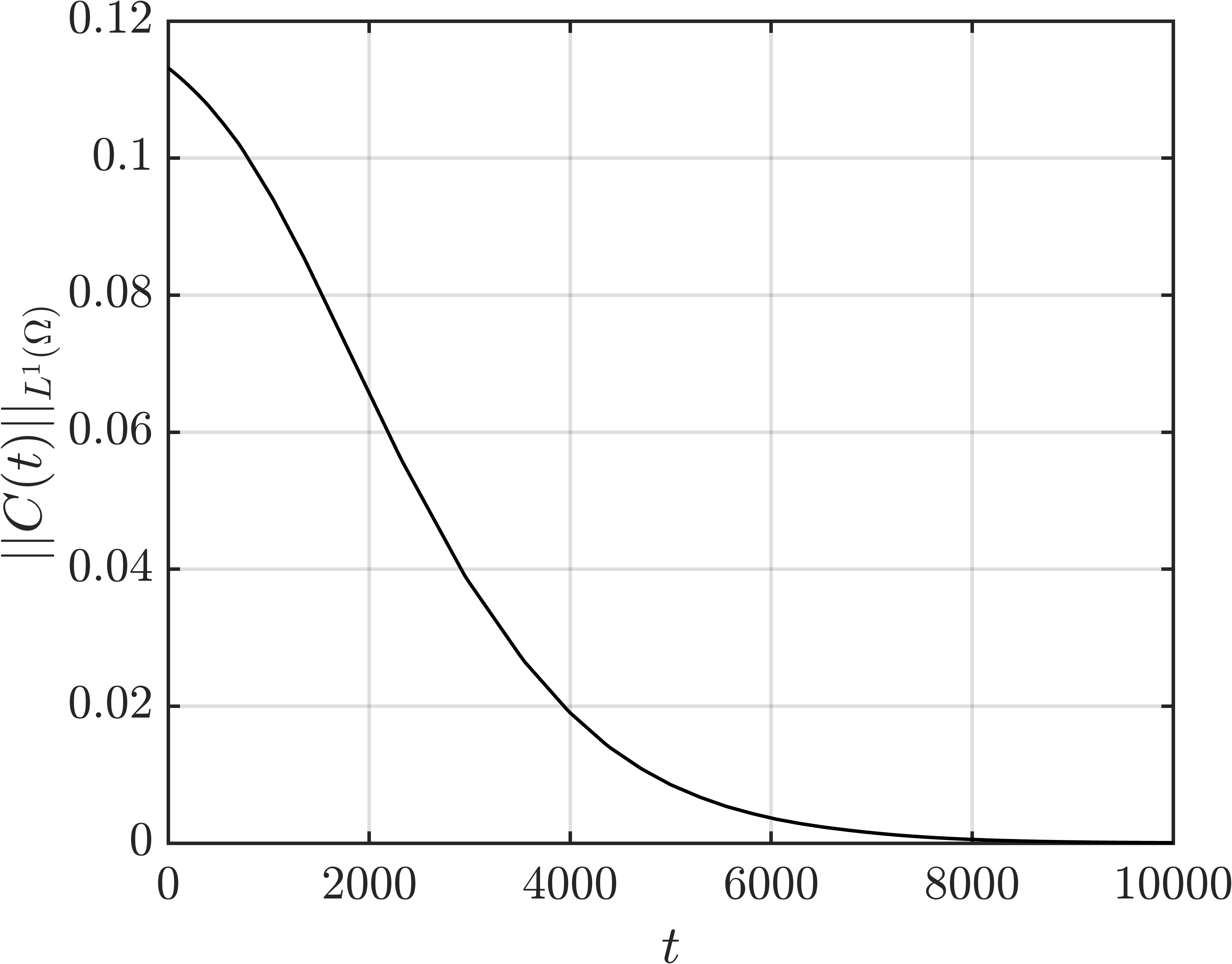}
\includegraphics[width=0.49\textwidth]{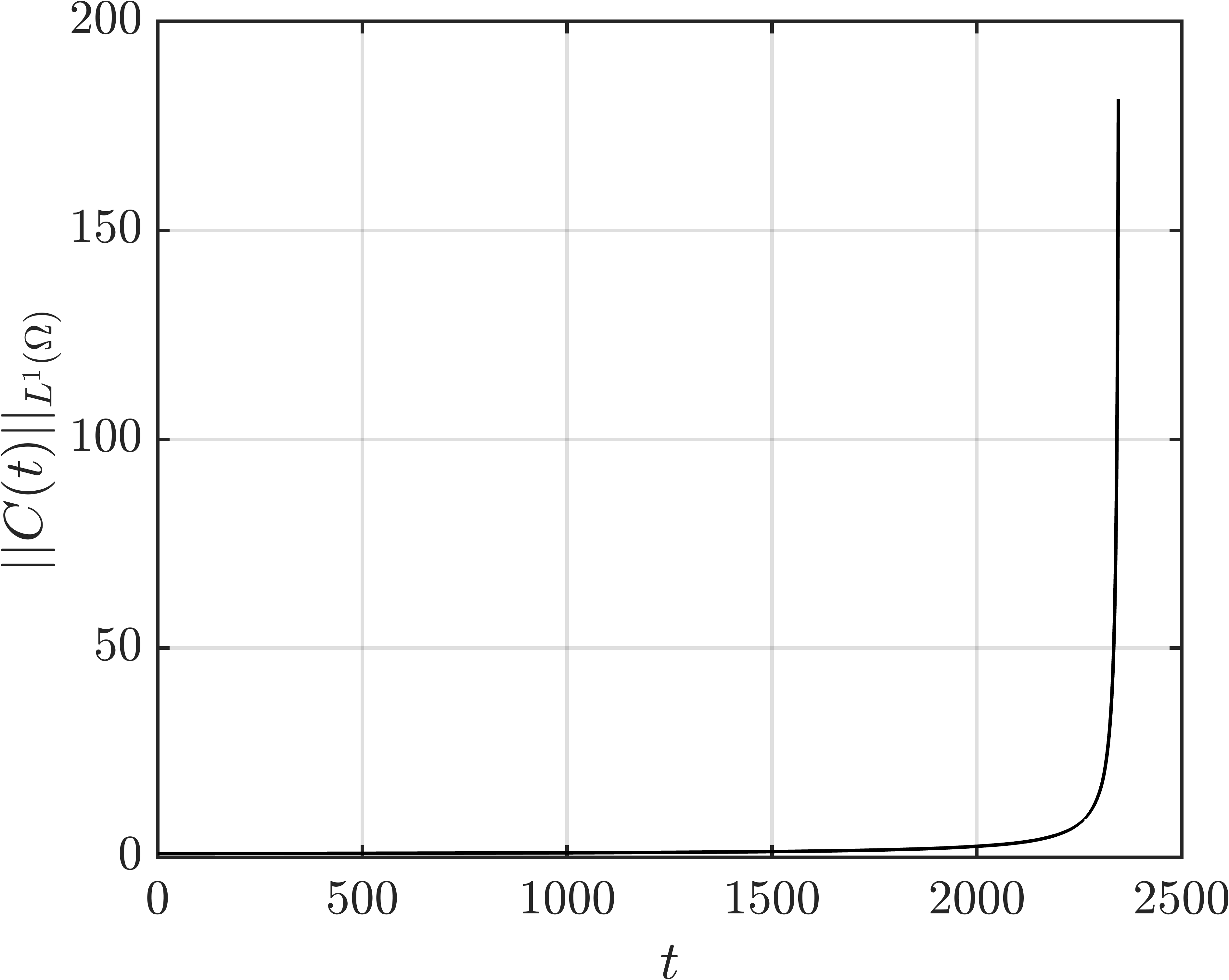}
\caption{Temporal evolution of $\|C(t)\|_{L^1(\Omega)}$ corresponding to the initial conditions given in (a) equation \eqref{eq:numerical_IC1}, and (b) equation \eqref{eq:numerical_IC2}.}
\label{fig:long-time}
\end{figure}

Next, we discuss the finite-time blow-up solution of the model problem. In this regard, we use the following initial condition 
\begin{align}
\label{eq:numerical_IC2}
 \mathbf u_0 = \mathbf 0, \qquad \text{and} \qquad  C_0 = 1.1.  
\end{align}
The parameter values are kept unchanged from those in the previous configuration. In an auto-catalytic reaction, the process starts slowly, but as time proceeds, more reactants are produced, which further accelerates the overall production rate. Figure \ref{fig:long-time}(b) shows that the solute concentration blows up in finite time. Using these parameter values, the theoretical estimate for the blow-up time obtained from equation \eqref{eq: blowup time} yields $T_\ast \approx 2397.895$. Numerically, the blow-up time is computed to be $2345.71$, in good agreement with the theoretical estimate. 

Overall, the numerical simulations validate the theoretical findings of the non-negativity, decay and blow-up behaviour based on the initial condition for the concentration. 

\section{Conclusions \label{sec: conclusion}}

 In this paper, we proved the well-posedness of a model that uses an unsteady Darcy-Brinkman equation with Korteweg stress coupled with an advection-diffusion-reaction equation describing miscible displacement in a porous medium. The viscosity of the fluid and the permeability of the porous medium are assumed to depend on the concentration of a solute, which is also responsible for the Korteweg stress. We established the existence of a local in time weak solution of the said model in a bounded domain with a smooth boundary. We showed that if the initial concentration $C_0$ is bounded between $0$ and $1$, then the solution is global in time, and if $C_0>1$, then the solute concentration $C$ blows up in a finite time. We established the uniqueness of the solution in a bounded domain with a smooth boundary. We generalised our analysis in terms of the mobility function $F(C)$, which represents the ratio of fluid viscosity to the medium's permeability. Furthermore, we elaborated our results for some particular functional forms of the viscosity and permeability relevant in the contexts of enhanced oil recovery \citep{homsy1987viscous}, geological carbon sequestration \citep{huppert2014fluid}, and chromatography separation \citep{rana2019influence}. 
Our analysis provides the framework for understanding a wide class of problems describing miscible flows in porous media. In particular, the effects of chemical reactions \citep{nagatsu2014hydrodynamic}, fluid compressibility \citep{morrow2023gas}, and the Darcy/Brinkman-Forchheimer model for miscible displacement of incompressible fluids in porous media \citep{dehghan2024fully, caucao2020fully, caucao2022posteriori} are the focus of our ongoing research. 

\section*{Acknowledgements}
P.R. acknowledges financial support received from the University of
Grants Commission, Government of India (File No:  211610146712), and research facilities provided by the Indian Institute of Technology Guwahati, India. S.P. acknowledges financial support through the Start-Up Research Grant (SRG/2021/001269), MATRICS Grant (MTR/2022/000493) from the Science and Engineering Research Board, Department of Science and Technology, Government of India, and Start-up Research Grant (MATHSUGIITG01371SATP002), IIT Guwahati. 

\bibliographystyle{abbrvnat}
\bibliography{ref}

@book{williams2018combustion,
  title={Combustion theory},
  author={Williams, Forman A},
  year={2018},
  publisher={CRC Press}
}

@article{fisher1937wave,
  title={The wave of advance of advantageous genes},
  author={Fisher, Ronald Aylmer},
  journal={Annals of eugenics},
  volume={7},
  number={4},
  pages={355--369},
  year={1937},
  publisher={Wiley Online Library}
}

@article{Girelli_unsteady_DB,
    author = {Girelli, A. and Giantesio, G. and Musesti, A. and Penta, R.},
    title = {Multiscale homogenization for dual porosity time-dependent Darcy–Brinkman/Darcy coupling and its application to the lymph node},
    journal = {Royal Society Open Science},
    volume = {11},
    number = {7},
    pages = {231983},
    year = {2024},
    month = {07},
    doi = {10.1098/rsos.231983},
    url = {https://doi.org/10.1098/rsos.231983},
    eprint = {https://royalsocietypublishing.org/rsos/article-pdf/doi/10.1098/rsos.231983/952248/rsos.231983.pdf},
}

@article{CELEBI2006801,
author = {{Çelebi},A O and {Kalantarov}, V K   and {Uğurlu}, D.},
title = {On continuous dependence on coefficients of the {B}rinkman–{F}orchheimer equations},
journal = {Applied Mathematics Letters},
volume = {19},
number = {8},
pages = {801-807},
year = {2006},
doi = {https://doi.org/10.1016/j.aml.2005.11.002},
}

@article{Ghesmat2009,
  author = {Ghesmat, K. and Azaiez, J.},
  title = {Miscible Displacements of Reactive and Anisotropic Dispersive Flows in Porous Media},
  journal = {Transport in Porous Media},
  volume = {77},
  number = {3},
  pages = {489--506},
  year = {2009},
  publisher = {Springer}
}

@article{chen2002miscible,
  title={Miscible displacements in capillary tubes: Influence of Korteweg stresses and divergence effects},
  author={Chen, Ching-Yao and Meiburg, Eckart},
  journal={Physics of Fluids},
  volume={14},
  number={7},
  pages={2052--2058},
  year={2002},
  publisher={American Institute of Physics}
}

@article{chen2001miscible,
  title={Miscible droplets in a porous medium and the effects of Korteweg stresses},
  author={Chen, Ching-Yao and Wang, Lilin and Meiburg, Eckart},
  journal={Physics of fluids},
  volume={13},
  number={9},
  pages={2447--2456},
  year={2001},
  publisher={American Institute of Physics}
}

@article{pojman2007miscible,
  title={Miscible fluids in microgravity (MFMG): A zero-upmass investigation on the international space station},
  author={Pojman, John A and Bessonov, Nick and Volpert, Vitaly and Paley, Mark S},
  journal={Microgravity-Science and Technology},
  volume={19},
  number={1},
  pages={33--41},
  year={2007},
  publisher={Springer}
}

@article{JOSEPH1996104,
author = {D. D. Joseph},
title = {Fluid dynamics of two miscible liquids with diffusion
and gradient stresses},
journal = {European Journal of Mechanics - B/Fluids},
volume = {9},
number = {6},
pages = {565-596},
year = {1990},
}

@book{Temam1979,
  author       = {R. Temam},
  title        = {Navier-Stokes Equations. Theory and Numerical Analysis},
  series       = {Studies in Mathematics and its Applications},
  volume       = {2},
  publisher    = {North-Holland Publishing Co.},
  address      = {Amsterdam--New York},
  year         = {1979}
}

@book{Temam1983,
  author       = {R. Temam},
  title        = {Navier-Stokes Equations and Nonlinear Functional Analysis},
  publisher    = {SIAM},
  year         = {1983}
}

@book{Ladyzhenskaya1963,
  author    = {O.~A. Ladyzhenskaya},
  title     = {Mathematical Theory of Viscous Incompressible Flow},
  publisher = {Gordon and Breach},
  year      = {1963}
}

@book{Lions1969,
  author    = {J.~L. Lions},
  title     = {Quelques méthodes de résolution des problèmes aux limites non linéaires},
  publisher = {Gauthier-Villars},
  address   = {Paris},
  year      = {1969}
}

@book{boyer2012mathematical,
  title={Mathematical Tools for the Study of the Incompressible {N}avier-{S}tokes Equations and Related Models},
  author={Boyer, Franck and Fabrie, Pierre},
  year={2012},
  publisher={Springer Science \& Business Media}
}

@article{simon1986compact,
  author    = {Jacques Simon},
  title     = {Compact sets in the space ${L}^p(0, {T}; {B})$},
  journal   = {Annali di Matematica Pura ed Applicata},
  volume    = {146},
  pages     = {65--96},
  year      = {1986},
  publisher = {Springer},
  doi       = {10.1007/BF01762360}
}

@article{de2001fingering,
  title = {Fingering of Chemical Fronts in Porous Media},
  author = {De Wit, A.},
  journal = {Phys. Rev. Lett.},
  volume = {87},
  issue = {5},
  pages = {054502},
  numpages = {4},
  year = {2001},
  month = {Jul},
  publisher = {American Physical Society}
}

@article{hou2016dual,
author = {Hou, Jiangyong and Qiu, Meilan and He, Xiaoming and Guo, Chaohua and Wei, Mingzhen and Bai, Baojun},
title = {A Dual-Porosity-{S}tokes Model and Finite Element Method for Coupling Dual-Porosity Flow and Free Flow},
journal = {SIAM Journal on Scientific Computing},
volume = {38},
number = {5},
pages = {B710-B739},
year = {2016},
doi = {10.1137/15M1044072}
}

@article{evje2018stokes,
author = {Evje, Steinar and Wen, Huanyao},
title = {A {S}tokes Two-Fluid Model for Cell Migration that Can Account for Physical Cues in the Microenvironment},
journal = {SIAM Journal on Mathematical Analysis},
volume = {50},
number = {1},
pages = {86-118},
year = {2018},
doi = {10.1137/16M1078185}
}

@article{link2020mathematical,
  title={A mathematical model of platelet aggregation in an extravascular injury under flow},
  author={Link, Kathryn G and Sorrells, Matthew G and Danes, Nicholas A and Neeves, Keith B and Leiderman, Karin and Fogelson, Aaron L},
  journal={Multiscale Modeling \& Simulation},
  volume={18},
  number={4},
  pages={1489--1524},
  year={2020},
  publisher={SIAM}
}

@article{babuvska2010residual,
  title={A residual-based a posteriori error estimator for the {S}tokes--{D}arcy coupled problem},
  author={Babu{\v{s}}ka, Ivo and Gatica, Gabriel N},
  journal={SIAM Journal on Numerical Analysis},
  volume={48},
  number={2},
  pages={498--523},
  year={2010},
  publisher={SIAM}
}

@article{layton2002coupling,
  title={Coupling fluid flow with porous media flow},
  author={Layton, William J and Schieweck, Friedhelm and Yotov, Ivan},
  journal={SIAM Journal on Numerical Analysis},
  volume={40},
  number={6},
  pages={2195--2218},
  year={2002},
  publisher={SIAM}
}

@article{de2016chemo,
  title={Chemo-hydrodynamic patterns in porous media},
  author={De Wit, Anne},
  journal={Philosophical Transactions of the Royal Society A: Mathematical, Physical and Engineering Sciences},
  volume={374},
  number={2078},
  pages={20150419},
  year={2016},
  publisher={The Royal Society Publishing}
}

@book{delgado2011heat,
  title={Heat and mass transfer in porous media},
  author={Delgado, Jo{\~a}o MPQ},
  volume={13},
  year={2011},
  publisher={Springer Science \& Business Media}
}

@inbook{carbonell1984heat,
author = {Carbonell, Ruben and Whitaker, Stephen},
year = {1984},
month = {01},
pages = {121-198},
title = {Heat and Mass Transfer in Porous Media},
isbn = {90-247-2982-3},
journal = {NATO ASI Series, Series E: Applied Sciences},
doi = {10.1007/978-94-009-6175-3_3}
}

@article{molz1986simulation,
  title={Simulation of microbial growth dynamics coupled to nutrient and oxygen transport in porous media},
  author={Molz, Fred J and Widdowson, MA and Benefield, LD},
  journal={Water Resources Research},
  volume={22},
  number={8},
  pages={1207--1216},
  year={1986},
  publisher={Wiley Online Library}
}

@article{cimolin2013navier,
  title={Navier--{S}tokes/{F}orchheimer models for filtration through porous media},
  author={Cimolin, Flavio and Discacciati, Marco},
  journal={Applied Numerical Mathematics},
  volume={72},
  pages={205--224},
  year={2013},
  publisher={Elsevier}
}

@article{caucao2022posteriori,
  title={A posteriori error analysis of a mixed finite element method for the coupled {B}rinkman--{F}orchheimer and double-diffusion equations},
  author={Caucao, Sergio and Gatica, Gabriel N and Oyarz{\'u}a, Ricardo and Z{\'u}{\~n}iga, Paulo},
  journal={Journal of Scientific Computing},
  volume={93},
  number={2},
  pages={50},
  year={2022},
  publisher={Springer}
}

@article{caucao2020fully,
  title={A fully-mixed formulation for the steady double-diffusive convection system based upon {B}rinkman--{F}orchheimer equations},
  author={Caucao, Sergio and Gatica, Gabriel N and Oyarz{\'u}a, Ricardo and S{\'a}nchez, Nestor},
  journal={Journal of Scientific Computing},
  volume={85},
  number={2},
  pages={44},
  year={2020},
  publisher={Springer}
}

@article{dehghan2024fully,
  title={A fully mixed virtual element method for {D}arcy--{F}orchheimer miscible displacement of incompressible fluids appearing in porous media},
  author={Dehghan, Mehdi and Gharibi, Zeinab},
  journal={IMA Journal of Numerical Analysis},
  volume={44},
  number={2},
  pages={797--835},
  year={2024},
  publisher={Oxford University Press}
}

@article{neuzil1986groundwater,
  title={Groundwater flow in low-permeability environments},
  author={Neuzil, Christopher E},
  journal={Water Resources Research},
  volume={22},
  number={8},
  pages={1163--1195},
  year={1986},
  publisher={Wiley Online Library}
}

@article{koch2001inertial,
  title={Inertial effects in suspension and porous-media flows},
  author={Koch, Donald L and Hill, Reghan J},
  journal={Annual Review of Fluid Mechanics},
  volume={33},
  number={1},
  pages={619--647},
  year={2001},
  publisher={Annual Reviews 4139 El Camino Way, PO Box 10139, Palo Alto, CA 94303-0139, USA}
}

@article{naqvi2025inertial,
  title={Inertial effects on fluid flow through low-porosity media},
  author={Naqvi, Sahrish Batool and {\'S}nie{\.z}ek, Damian and Strzelczyk, Dawid and M{\k{a}}drala, Mariusz and Matyka, Maciej},
  journal={arXiv preprint arXiv:2505.10418},
  year={2025}
}

@article{strzelczyk2025role,
  title={The role of porosity in the transition to inertial regime in porous media flows},
  author={Strzelczyk, Dawid and Kosec, Gregor and Matyka, Maciej},
  journal={arXiv preprint arXiv:2505.10087},
  year={2025}
}

@book{vafai2015handbook,
  title={Handbook of Porous Media},
  author={Vafai, Kambiz},
  year={2015},
  publisher={CRC Press},
  pages={110}
}

@article{morrow2023gas,
  title={Gas compression systematically delays the onset of viscous fingering},
  author={Morrow, Liam C and Cuttle, Callum and MacMinn, Christopher W},
  journal={Physical Review Letters},
  volume={131},
  number={22},
  pages={224002},
  year={2023},
  publisher={APS}
}

@article{rana2019influence,
  title={Influence of {L}angmuir adsorption and viscous fingering on transport of finite size samples in porous media},
  author={Rana, Chinar and Pramanik, Satyajit and Martin, Michel and De Wit, Anne and Mishra, Manoranjan},
  journal={Physical Review Fluids},
  volume={4},
  number={10},
  pages={104001},
  year={2019},
  publisher={APS}
}

@article{huppert2014fluid,
  title={The fluid mechanics of carbon dioxide sequestration},
  author={Huppert, Herbert E and Neufeld, Jerome A},
  journal={Annual Review of Fluid Mechanics},
  volume={46},
  number={1},
  pages={255--272},
  year={2014},
  publisher={Annual Reviews}
}

@article{migorski2019nonmonotone,
  title={Nonmonotone slip problem for miscible liquids},
  author={Mig{\'o}rski, Stanislaw and Szafraniec, Pawel},
  journal={Journal of Mathematical Analysis and Applications},
  volume={471},
  number={1-2},
  pages={342--357},
  year={2019},
  publisher={Elsevier}
}

@article{nagatsu2014hydrodynamic,
  title={Hydrodynamic fingering instability induced by a precipitation reaction},
  author={Nagatsu, Yuichiro and Ishii, Y and Tada, Yutaka and De Wit, Anne},
  journal={Physical Review Letters},
  volume={113},
  number={2},
  pages={024502},
  year={2014},
  publisher={APS}
}

@article{morales2017darcy,
  title={A {D}arcy--{B}rinkman model of fractures in porous media},
  author={Morales, Fernando A and Showalter, Ralph E},
  journal={Journal of Mathematical Analysis and Applications},
  volume={452},
  number={2},
  pages={1332--1358},
  year={2017},
  publisher={Elsevier}
}

@article{mccurdy2019convection,
  title={Convection in a coupled free flow-porous media system},
  author={McCurdy, Matthew and Moore, Nicholas and Wang, Xiaoming},
  journal={SIAM Journal on Applied Mathematics},
  volume={79},
  number={6},
  pages={2313--2339},
  year={2019},
  publisher={SIAM}
}

@article{ebenbeck2019cahn,
  title={On a {C}ahn--{H}illiard--{B}rinkman Model for Tumor Growth and Its Singular Limits},
  author={Ebenbeck, Matthias and Garcke, Harald},
  journal={SIAM Journal on Mathematical Analysis},
  volume={51},
  number={3},
  pages={1868--1912},
  year={2019},
  publisher={SIAM}
}

@article{homsy1987viscous,
  title={Viscous fingering in porous media},
  author={Homsy, George M},
  journal={Annual Review of Fluid Mechanics},
  volume={19},
  number={1},
  pages={271--311},
  year={1987},
  publisher={Annual Reviews 4139 El Camino Way, PO Box 10139, Palo Alto, CA 94303-0139, USA}
}

@article{kostin2003modelling,
  title={Modelling of miscible liquids with the {K}orteweg stress},
  author={Kostin, Ilya and Marion, Martine and Texier-Picard, Rozenn and Volpert, Vitaly A},
  journal={ESAIM: Mathematical Modelling and Numerical Analysis},
  volume={37},
  number={5},
  pages={741--753},
  year={2003},
  publisher={EDP Sciences}
}

@article{kundu2024existence,
  title={Existence and uniqueness of solution to unsteady {D}arcy-{B}rinkman problem with {K}orteweg stress for modelling miscible porous media flow},
  author={Kundu, Sahil and Maharana, Surya Narayan and Mishra, Manoranjan},
  journal={Journal of Mathematical Analysis and Applications},
  volume={539},
  number={2},
  pages={128532},
  year={2024},
  publisher={Elsevier}
}

@book{evans2010partial,
  title={Partial Differential Equations},
  author={Evans, L. C.},
  isbn={9780821849743},
  lccn={2009044716},
  series={Graduate studies in mathematics},
  url={https://books.google.co.in/books?id=Xnu0o_EJrCQC},
  year={2010},
  publisher={American Mathematical Society}
}

@article{khalifa2002new,
  title={New extension of {D}arcy's law to unsteady flows},
  author={Khalifa, M. and Wahyudi, I. and Thomas, P.},
  journal={Soils and Foundations},
  volume={42},
  number={6},
  pages={53--63},
  year={2002},
  publisher={Elsevier}
}

@article{brinkman1949calculation,
  title={A calculation of the viscous force exerted by a flowing fluid on a dense swarm of particles},
  author={Brinkman, Hendrik C},
  journal={Flow, Turbulence and Combustion},
  volume={1},
  number={1},
  pages={27--34},
  year={1949},
  publisher={Springer}
}

@article{korteweg1901form,
  title={On the form which the equations of motion of fluids take if we take into account the capillary forces caused by considerable but known variations of density and on the theory of capillarity in the hypothesis of a continuous variation of the density},
  author={Korteweg, D},
  journal={Dutch Arch. Exact Nat. Sci},
  volume={6},
  pages={1--24},
  year={1901}
}

@article{swernath2010effect,
  title={Effect of {K}orteweg stress on viscous fingering of solute plugs in a porous medium},
  author={Swernath, S and Malengier, Benny and Pushpavanam, S},
  journal={Chemical Engineering Science},
  volume={65},
  number={7},
  pages={2284--2291},
  year={2010},
  publisher={Elsevier}
}

@article{darcy1856,
  author    = {Darcy, Henry},
  title     = {Les fontaines publiques de la ville de Dijon},
  year      = {1856},
  publisher = {Dalmont},
  address   = {Paris},
  journal = {},
  volume = {}
}

@article{pramanik2013linear,
  title={Linear stability analysis of {K}orteweg stresses effect on miscible viscous fingering in porous media},
  author={Pramanik, Satyajit and Mishra, Manoranjan},
  journal={Physics of Fluids},
  volume={25},
  number={7},
  year={2013},
  publisher={AIP Publishing}
}

@article{allali2017existence,
  title={Existence and uniqueness of solutions to a model describing miscible liquids},
  author={Allali, Karam},
  journal={Comptes Rendus. Math{\'e}matique},
  volume={355},
  number={11},
  pages={1148--1153},
  year={2017}
}

@book{kesavan2015topics,
  title={Topics in Functional Analysis and Applications},
  author={Kesavan, S.},
  isbn={9789387788954},
  year={2019},
  publisher={New Age International Pvt Limited Publishers}
}

@article{zheng2022tumor,
  title={Tumor growth towards lower extracellular matrix conductivity regions under {D}arcy’s Law and steady morphology},
  author={Zheng, Xiaoming and Zhao, Kun and Jackson, Trachette and Lowengrub, John},
  journal={Journal of mathematical biology},
  volume={85},
  number={1},
  pages={5},
  year={2022},
  publisher={Springer}
}

\end{document}